\documentclass[12pt]{amsart}
\usepackage[dvipsnames]{xcolor}
\usepackage{tikz}
\usepackage{graphicx}
\usepackage{amssymb, amsthm, amsmath}
\usepackage{etoolbox}
\apptocmd{\sloppy}{\hbadness 10000\relax}{}{}
\apptocmd{\sloppy}{\vbadness 10000\relax}{}{}
\usepackage[justification=justified]{caption}
\usepackage{hyperref}
\usepackage[letterpaper,margin=1.1in]{geometry}

\usepackage{longtable}
\usepackage{needspace}
\usepackage{booktabs}
\usepackage{array}
\usepackage{enumitem}

\definecolor{hc}{RGB}{0,128,128}
\definecolor{vc}{RGB}{142,69,133}

\newtheorem{theorem}{Theorem}[section]
\newtheorem{proposition}[theorem]{Proposition}
\newtheorem{lemma}[theorem]{Lemma}
\newtheorem{corollary}[theorem]{Corollary}

\theoremstyle{definition}
\newtheorem{definition}[theorem]{Definition}

\theoremstyle{remark}

\numberwithin{equation}{section}
\numberwithin{figure}{section}
\numberwithin{table}{section}

\allowdisplaybreaks[1]
\newcommand{\F}{F_8}
\newcommand{\KK}{\mathcal K}
\newcommand{\omicron}{o}
\newcommand{\st}[1]{\mathrm{#1}}
\newcommand{\Ppi}{P_\pi}
\newcommand{\Prho}{P_\rho}
\newcommand{\Pchi}{P_\chi}

\title[Optimal packing of eight squares]{An exact formula for optimal axis-parallel packing of eight squares}
\date{October 20, 2026}
\author{Matthew Badger}
\subjclass[2020]{Primary 52C15; Secondary 05B40, 90C27.}
\keywords{square packing, packing squares in a square, axis-parallel squares, packing strategies, piecewise linear functions, convex functions}
\address{Department of Mathematics\\ University of Connecticut\\ Storrs, CT 06269-1009, USA}
\email{matthew.badger@uconn.edu}
\begin{document}
\begin{abstract}
We determine the smallest square container for eight axis-parallel squares
of prescribed side lengths. The optimal side length is the minimum of the
costs of 23 explicit packing strategies, and each cost is a maximum of sums
of side lengths. No minimum of fewer convex functions represents it. Three
strategies suffice for seven squares; the jump to 23 for eight squares
witnesses the beginning of a combinatorial explosion. We prove the formula by combining
the constructions with geometric lower bounds derived from disjoint
coordinate projections, cornering arguments, and a weighted area comparison.
\end{abstract}

\maketitle

\setcounter{tocdepth}{1}
\tableofcontents

\section{Introduction}\label{sec:introduction}
Container problems ask for the smallest container of a given shape that
holds prescribed convex bodies with pairwise disjoint interiors. Packing squares in a
square already leads to difficult questions. Moon and Moser~\cite{MoonMoser67}
showed that axis-parallel squares of total area at most $1/2$ always fit in a
unit square, and that $1/2$ is sharp.
Erd\H{o}s and Graham~\cite{ErdosGraham75} showed that unit squares, suitably
tilted, can be packed into a square of large side $a$ leaving uncovered area
$O(a^{7/11})$. Friedman's survey~\cite{Friedman09} describes the smallest squares
known to contain $n$ unit squares when rotations are allowed, and the
cases for which optimality has been proved. For example,
Stromquist~\cite{Stromquist03} proved that ten unit squares require a square
of side $3+1/\sqrt2$. A computer-assisted proof of the optimality of Walter
Trump's 1979 packing of eleven unit squares has recently been
announced~\cite{ElevenSquares26}, together with a Lean
formalization~\cite{ElevenSquaresFormalized26}.

For items of fixed orientation, Murata, Fujiyoshi, Nakatake, and
Kajitani~\cite{Murata96} encode placements of rectangles by sequence-pairs,
and Fekete and Schepers~\cite{FeketeSchepers04} characterize feasible
orthogonal packings in every dimension by packing classes. Encodings of this
kind underlie exact search methods for individual instances. For example,
Huang and Korf~\cite{HuangKorf13} find all minimum-area enclosing rectangles
for a given set of rectangles, and extend the known solutions of the
benchmark of consecutive squares $1,2,\ldots,n$ to $n=32$.
When the number of items is part of the input, deciding whether prescribed
squares fit without rotations into a prescribed square is strongly
NP-complete~\cite{Leung90}.

Here the side lengths are arbitrary, the orientations are fixed, and the
goal is an exact formula valid for every ordered side vector. Given a
nonincreasing sequence $s=(s_0,s_1,\ldots)$ of nonnegative numbers, let
$L^*(s)$ be the least side of a square that contains axis-parallel squares
$Q_0,Q_1,\ldots$ of sides $s_0,s_1,\ldots$ with pairwise disjoint interiors. Sides of length
zero signify absent squares. The value $L^*(s)$ is finite exactly when
$s\in\ell_2$, and we write $\ell_2^\downarrow$ for the cone of nonincreasing
nonnegative sequences in $\ell_2$. We identify a finite nonincreasing
nonnegative tuple with the sequence in $\ell_2^\downarrow$ obtained by
appending zeros. For $n\ge1$ and $s\in\ell_2^\downarrow$, we write
$L_n^*(s)=L^*(s_0,\ldots,s_{n-1},0,0,\ldots)$ for the optimum for the $n$ largest squares. In~\cite{BadgerSeven} we gave
exact formulas for $L_n^*$ when $n\le7$, together with methods for proving
lower bounds. In particular, for every $s\in\ell_2^\downarrow$,
\begin{align}
L_6^*(s)&=\max(s_0+s_1,\ s_2+s_3+s_4,\ s_1+s_4+s_5),\label{eq:intro-six}\\
L_7^*(s)&=\max\bigl(L_6^*(s),\min\{\max(s_3+s_4+s_5+s_6,\ s_1+s_2+s_5),\notag\\
&\hphantom{{}=\max\bigl(L_6^*(s),\min\{}\max(s_0+s_5+s_6,\ s_1+s_3+s_6),\ s_1+s_4+s_5+s_6\}\bigr).\label{eq:intro-seven}
\end{align}

A strategy for $n$ squares prescribes, for each pair of squares, at least
one axis on which they are separated and their order along each such axis.
Its cost is the greatest side sum of a set of squares that the strategy
orders linearly along one axis, and $L_n^*(s)$ is the minimum of the costs
over all strategies. A single strategy suffices for $n=6$. For $n=7$, three
strategies suffice; we call them $\st{top}$, $\st{center}$, and
$\st{right}$, after the position of $Q_6$, and their costs are the three
entries of the minimum in~\eqref{eq:intro-seven}, each combined with
$L_6^*$ by a maximum. For eight squares we prove the following.

\begin{theorem}\label{thm:main}
For every $s\in\ell_2^\downarrow$,
\[
L_8^*(s)=\max\bigl(L_7^*(s),\ \min\{\alpha(s),\beta(s),\ldots,\psi(s)\}\bigr)
=\min_\omega\max\bigl(L_6^*(s),\omega(s)\bigr),
\]
where $\alpha,\beta,\ldots,\psi$ are the twenty-three maxima of side sums
in Table~\ref{tab:pieces} and $\omega$ ranges over them. For each $\omega$,
the function $\ell_\omega=\max(L_6^*,\omega)$ is the cost of the strategy
$\sigma_\omega$ defined by the coordinates in the row of $\omega$ in
Table~\ref{tab:coords8}.

Moreover, if $h_1,\ldots,h_m$ are real-valued convex functions on
$\ell_2^\downarrow$ with $L_8^*(s)=\min_kh_k(s)$ for every
$s\in\ell_2^\downarrow$, then $m\ge23$. In particular, every complete family
of strategies for eight squares has at least twenty-three members.
\end{theorem}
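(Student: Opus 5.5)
The proof splits into an upper bound, a lower bound, and a minimality statement. We first verify the second equality in the display. Since $\max(L_6^*,\cdot)$ distributes over a minimum, $\min_\omega\max(L_6^*,\omega)=\max(L_6^*,\min_\omega\omega)$. So it suffices to check that every $\omega$ in Table~\ref{tab:pieces} dominates the seven-square bracket in \eqref{eq:intro-seven} on $\ell_2^\downarrow$. We would do this by finding, for each $\omega$, one of its side sums that is termwise at least $s_3+s_4+s_5+s_6$ or one of the other seven-square entries; this uses only monotonicity of $s$. For the upper bound $L_8^*\le\ell_\omega$, we read off from the row of $\omega$ in Table~\ref{tab:coords8} the coordinates of $Q_0,\dots,Q_7$ as sums of side lengths. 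We then check two things: that each pair of squares is separated along some axis by an inequality that holds for every nonincreasing $s$, and that every square lies in $[0,\ell_\omega]^2$. Equivalently, the longest chains in the two separation orders of $\sigma_\omega$ have side sums among the terms of $L_6^*$ and $\omega$. This is finite bookkeeping, 23 rows with 28 pairs each.

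For the lower bound $L_8^*\ge\max(L_7^*,\min_\omega\omega)$, monotonicity gives $L_8^*\ge L_7^*$, so we fix an optimal packing in a square of side $L$ and show $L\ge\omega(s)$ for some $\omega$. Following the methods of \cite{BadgerSeven}, we assign to each pair of squares a separating axis and order, obtaining a strategy. The packing then bounds $L$ below by the side sum of any chain, that is, by the disjoint coordinate projections. The task reduces to showing that every realizable strategy for eight squares contains chains whose sums dominate some $\omega$. We would organize this by the positions of the three largest squares $Q_0,Q_1,Q_2$: whether $Q_0,Q_1$ share a row or a column, and where $Q_2$ sits. We would then use cornering arguments to push the large squares to corners and walls without increasing $L$, reducing to finitely many combinatorial configurations of $Q_3,\dots,Q_7$. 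In each configuration we either exhibit dominating chains or produce a contradiction.

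For configurations where no projection argument closes the gap, because small squares fill a pocket two-dimensionally, we apply a weighted area comparison. Fix the region bounded by chains of larger squares and suppose, for contradiction, that $L<\omega(s)$ for every $\omega$. The pocket then has dimensions bounded by linear expressions in $s$. Comparing a weighted sum of the areas of the squares inside with the pocket area yields a contradiction, using the ordering $s_i\ge s_{i+1}$. I expect this case analysis to be the main obstacle. The number of configurations is large, and the bounds must be tight exactly on the boundaries between the 23 pieces, so the cases and the weights have to be chosen to match the structure of Table~\ref{tab:pieces}.

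For the minimality statement, suppose $L_8^*=\min_kh_k$ with each $h_k$ convex and $m<23$. For each $\omega$ we choose a point $p_\omega\in\ell_2^\downarrow$, supported on the first eight coordinates, in the interior of the region where $\ell_\omega$ is the unique minimizer and is strictly smaller than every other $\ell_{\omega'}$. By pigeonhole, some $h_k$ achieves the minimum at two such points $p_\omega,p_{\omega'}$. Convexity then gives $L_8^*\le h_k\le$ the linear interpolation along the segment between them. We would choose the 23 points so that for every pair, $L_8^*$ is strictly concave somewhere on the connecting segment. Concretely, we want the midpoint value of $L_8^*$ to exceed the average of the endpoint values, which contradicts $L_8^*\le h_k$ at the midpoint. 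Checking this for all $\binom{23}{2}$ pairs is a finite computation with the explicit formula. Since every strategy cost is a maximum of linear functions and hence convex, the final claim about complete families of strategies follows immediately.
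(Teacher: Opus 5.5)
\textbf{The main gap: the lower bound.} Your upper bound and your convexity argument for $m\ge23$ follow the paper: Proposition~\ref{prop:eight-upper} and Lemma~\ref{lem:count}. By homogeneity your midpoint condition is $L_8^*(p+q)>L_8^*(p)+L_8^*(q)$, which the paper certifies at the explicit vectors of Table~\ref{tab:cert}. The gap is the lower bound $L_8^*\ge\max(L_7^*,\min_\omega\omega)$. This is the substance of the theorem, and your proposal does not prove it: it names no inequality, no clearing family, and no configuration.

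The organization you sketch also has a structural problem. The hypothesis $L<\min_\omega\omega$ says only that each of the twenty-three maxima has \emph{some} term exceeding $L$. That is a disjunction, and it does not supply the uniform exclusions that cornering and pins need. For instance, from $L<\eta=\max(G_6,J_7)$ alone you cannot conclude that $\{0,3,6\}$ clears, yet that triple is required by the cornering criterion of Lemma~\ref{lem:pane}.

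The missing idea is to split the lower bound into finitely many inequalities $L_8^*\ge b_i$, each a \emph{minimum} of a few side sums. Examples are $b_2=\min(C_0,E,J_4,J_{67})$ and $b_6=\min(E,X)$, and $b_1=\min(T,\KK)$ is built from balanced partitions. To these one adds the column bound $\min(D,P)$ of Lemma~\ref{lem:column}. For each $b_i$ the hypothesis $L<b_i$ is a conjunction, so every family reaching one of the listed sums clears. This is what feeds Lemmas~\ref{lem:pane}, \ref{lem:exceptional}, \ref{lem:right-stack} and~\ref{lem:fourcore-frame} until a clearing chain appears. The disjunctive structure of the twenty-three maxima is handled separately, by a pure side-sum comparison (Theorem~\ref{thm:dominate}) showing $\max(L_7^*,\min(D,P),b_1,\ldots,b_8)\ge F_8$.

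Your weighted step is also not the one needed. Lemma~\ref{lem:weighted} is a global reweighting of the packing's strategy, not a pocket-area estimate. Put weight $2$ on $Q_0,Q_1,Q_2$ and weight $1$ on the rest. If every chain had weight at most $4$, squares of total area $3\cdot4+5=17$ would pack in $[0,4]^2$, which is impossible. So some chain has weight at least $5$, and this yields $b_3=\min(J_7,V)$. Finally, $b_1$ needs the covering-chain refinement of Lemma~\ref{lem:two-cover}.

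\textbf{A second error: the check of the second equality.} Your check is false as stated: not every $\omega$ dominates the bracket $\min(\max(W,H),\max(C_0,C_3),R)$ of \eqref{eq:intro-seven}. At $s=(100,50,30,20,15,10,8,1)$ the bracket equals $R=83$, but $\xi=80$, $\sigma=74$ and $\upsilon=73$. Each of these three has a term termwise at least $W$, but that does not dominate $\max(W,H)$. What is true is $\max(L_6^*,\omega)\ge L_7^*$ (here $L_6^*=150$). It follows from your upper bound with no termwise comparison:
\[
\max\Bigl(L_6^*,\min_\omega\omega\Bigr)=\min_\omega\ell_\omega\ge L_8^*\ge L_7^*\ge L_6^*.
\]
Hence replacing $L_6^*$ by $L_7^*$ in the maximum changes nothing.
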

\begingroup
\small
\renewcommand{\arraystretch}{1.12}
\begin{longtable}{@{}r@{${}={}$}l@{}}
\caption{The twenty-three maxima of side sums in Theorem~\ref{thm:main}.}\label{tab:pieces}\\
\toprule
\endfirsthead
\multicolumn{2}{c}{\tablename~\thetable\ (continued)}\\[2pt]
\toprule
\endhead
\bottomrule
\endlastfoot
$\alpha$&$\max(s_0+s_4+s_7,\ s_0+s_5+s_6,\ s_1+s_2+s_7,\ s_1+s_3+s_5)$\\
$\beta$&$\max(s_0+s_3+s_7,\ s_0+s_5+s_6,\ s_1+s_3+s_6,\ s_2+s_5+s_6+s_7)$\\
$\gamma$&$\max(s_0+s_3+s_7,\ s_0+s_5+s_6,\ s_1+s_2+s_6)$\\
$\delta$&$\max(s_0+s_4+s_5,\ s_1+s_2+s_7,\ s_1+s_3+s_6)$\\
$\varepsilon$&$\max(s_0+s_5+s_6,\ s_1+s_3+s_4,\ s_2+s_4+s_6+s_7)$\\
$\zeta$&$\max(s_0+s_6+s_7,\ s_1+s_2+s_4,\ s_3+s_4+s_5+s_6)$\\
$\eta$&$\max(s_0+s_3+s_6,\ s_1+s_2+s_7)$\\
$\theta$&$\max(s_0+s_2+s_7,\ s_0+s_5+s_6,\ s_1+s_3+s_6)$\\
$\iota$&$\max(s_0+s_4+s_7,\ s_0+s_5+s_6,\ s_1+s_3+s_5,\ s_2+s_5+s_6+s_7)$\\
$\kappa$&$\max(s_0+s_5+s_6,\ s_1+s_2+s_4,\ s_3+s_4+s_6+s_7)$\\
$\lambda$&$\max(s_0+s_4+s_5,\ s_1+s_3+s_6,\ s_2+s_5+s_6+s_7)$\\
$\mu$&$\max(s_0+s_5+s_6,\ s_1+s_3+s_5,\ s_1+s_4+s_6+s_7)$\\
$\nu$&$\max(s_0+s_6+s_7,\ s_1+s_3+s_7,\ s_1+s_4+s_5+s_6)$\\
$\xi$&$\max(s_1+s_3+s_5,\ s_1+s_3+s_6+s_7,\ s_2+s_4+s_5+s_6)$\\
$\omicron$&$\max(s_0+s_4+s_7,\ s_1+s_2+s_5,\ s_3+s_4+s_5+s_6)$\\
$\pi$&$\max(s_0+s_5+s_6,\ s_1+s_3+s_6+s_7)$\\
$\rho$&$\max(s_0+s_5+s_6,\ s_1+s_3+s_6,\ s_1+s_4+s_5+s_7)$\\
$\sigma$&$\max(s_1+s_4+s_6+s_7,\ s_2+s_3+s_5+s_6)$\\
$\tau$&$\max(s_1+s_4+s_5+s_6,\ s_2+s_3+s_6+s_7)$\\
$\upsilon$&$\max(s_1+s_5+s_6+s_7,\ s_2+s_3+s_4+s_6)$\\
$\varphi$&$\max(s_1+s_2+s_5,\ s_1+s_2+s_6+s_7,\ s_3+s_4+s_5+s_6)$\\
$\chi$&$\max(s_0+s_5+s_6+s_7,\ s_1+s_3+s_6)$\\
$\psi$&$s_1+s_4+s_5+s_6+s_7$\\
\end{longtable}
\endgroup

For each expression $\omega$ in Table~\ref{tab:pieces}, the coordinates in
the row of $\omega$ in Table~\ref{tab:coords8} give a packing in every square
of side at least the strategy cost $\ell_\omega$; Figure~\ref{fig:strategies}
illustrates them. This proves the upper bound in the formula
of Theorem~\ref{thm:main}. For the lower bound we prove eight new
inequalities $L_8^*(s)\ge b_i(s)$, $i=1,\ldots,8$, expressed using minima
and maxima of side sums of $s_0,\ldots,s_7$, and use one more, a column
bound from~\cite{BadgerSeven}. Most of the new bounds follow from the
chain and cornering arguments of~\cite{BadgerSeven}, and one from a weighted
comparison of areas. A comparison of side sums then shows that the maximum
of $L_7^*$, the column bound, and $b_1,\ldots,b_8$ is at least the
right-hand side of the formula. This step compares
explicit formulas and requires no classification of optimal packings. The
bound $m\ge23$ in Theorem~\ref{thm:main} follows from twenty-three
side vectors $p_\omega$, one for each $\omega$, at which $L_8^*$ fails to be
subadditive for every pair of distinct vectors:
\[
L_8^*(p_\omega+p_{\omega'})>L_8^*(p_\omega)+L_8^*(p_{\omega'})\quad\text{for all }\omega\ne\omega'.
\]

\begin{figure}[!htbp]\centering
\begin{minipage}[t]{0.31\textwidth}\centering
\begin{tikzpicture}[x=0.1132cm,y=0.1132cm]
\useasboundingbox (0,0) rectangle (38,38);
\filldraw[fill=black!7,draw=black,line width=.35pt] (0,0) rectangle (22,22);
\node[font=\scriptsize,inner sep=0pt] at (11,11) {$0$};
\filldraw[fill=black!7,draw=black,line width=.35pt] (23,0) rectangle (38,15);
\node[font=\scriptsize,inner sep=0pt] at (30.5,7.5) {$1$};
\filldraw[fill=black!7,draw=black,line width=.35pt] (24,15) rectangle (38,29);
\node[font=\scriptsize,inner sep=0pt] at (31,22) {$2$};
\filldraw[fill=black!7,draw=black,line width=.35pt] (0,27) rectangle (11,38);
\node[font=\scriptsize,inner sep=0pt] at (5.5,32.5) {$3$};
\filldraw[fill=black!7,draw=black,line width=.35pt] (11,28) rectangle (21,38);
\node[font=\scriptsize,inner sep=0pt] at (16,33) {$4$};
\filldraw[fill=black!7,draw=black,line width=.35pt] (29,29) rectangle (38,38);
\node[font=\scriptsize,inner sep=0pt] at (33.5,33.5) {$5$};
\filldraw[fill=black!7,draw=black,line width=.35pt] (21,30) rectangle (29,38);
\node[font=\scriptsize,inner sep=0pt] at (25,34) {$6$};
\draw[line width=.7pt] (0,0) rectangle (38,38);
\end{tikzpicture}\par
\smallskip{\footnotesize (a) $\st{top}$, $L=38$\\ $s=(22,15,14,11,10,9,8)$}
\end{minipage}
\hfill
\begin{minipage}[t]{0.31\textwidth}\centering
\begin{tikzpicture}[x=0.1194cm,y=0.1194cm]
\useasboundingbox (0,0) rectangle (36,36);
\filldraw[fill=black!7,draw=black,line width=.35pt] (0,0) rectangle (19,19);
\node[font=\scriptsize,inner sep=0pt] at (9.5,9.5) {$0$};
\filldraw[fill=black!7,draw=black,line width=.35pt] (20,0) rectangle (36,16);
\node[font=\scriptsize,inner sep=0pt] at (28,8) {$1$};
\filldraw[fill=black!7,draw=black,line width=.35pt] (0,23) rectangle (13,36);
\node[font=\scriptsize,inner sep=0pt] at (6.5,29.5) {$2$};
\filldraw[fill=black!7,draw=black,line width=.35pt] (13,24) rectangle (25,36);
\node[font=\scriptsize,inner sep=0pt] at (19,30) {$3$};
\filldraw[fill=black!7,draw=black,line width=.35pt] (26,26) rectangle (36,36);
\node[font=\scriptsize,inner sep=0pt] at (31,31) {$4$};
\filldraw[fill=black!7,draw=black,line width=.35pt] (27,16) rectangle (36,25);
\node[font=\scriptsize,inner sep=0pt] at (31.5,20.5) {$5$};
\filldraw[fill=black!7,draw=black,line width=.35pt] (19,16) rectangle (27,24);
\node[font=\scriptsize,inner sep=0pt] at (23,20) {$6$};
\draw[line width=.7pt] (0,0) rectangle (36,36);
\end{tikzpicture}\par
\smallskip{\footnotesize (b) $\st{center}$, $L=36$\\ $s=(19,16,13,12,10,9,8)$}
\end{minipage}
\hfill
\begin{minipage}[t]{0.31\textwidth}\centering
\begin{tikzpicture}[x=0.0827cm,y=0.0827cm]
\useasboundingbox (0,0) rectangle (52,52);
\filldraw[fill=black!7,draw=black,line width=.35pt] (0,0) rectangle (26,26);
\node[font=\scriptsize,inner sep=0pt] at (13,13) {$0$};
\filldraw[fill=black!7,draw=black,line width=.35pt] (27,0) rectangle (52,25);
\node[font=\scriptsize,inner sep=0pt] at (39.5,12.5) {$1$};
\filldraw[fill=black!7,draw=black,line width=.35pt] (0,31) rectangle (21,52);
\node[font=\scriptsize,inner sep=0pt] at (10.5,41.5) {$2$};
\filldraw[fill=black!7,draw=black,line width=.35pt] (21,32) rectangle (41,52);
\node[font=\scriptsize,inner sep=0pt] at (31,42) {$3$};
\filldraw[fill=black!7,draw=black,line width=.35pt] (42,42) rectangle (52,52);
\node[font=\scriptsize,inner sep=0pt] at (47,47) {$4$};
\filldraw[fill=black!7,draw=black,line width=.35pt] (43,25) rectangle (52,34);
\node[font=\scriptsize,inner sep=0pt] at (47.5,29.5) {$5$};
\filldraw[fill=black!7,draw=black,line width=.35pt] (44,34) rectangle (52,42);
\node[font=\scriptsize,inner sep=0pt] at (48,38) {$6$};
\draw[line width=.7pt] (0,0) rectangle (52,52);
\end{tikzpicture}\par
\smallskip{\footnotesize (c) $\st{right}$, $L=52$\\ $s=(26,25,21,20,10,9,8)$}
\end{minipage}
\par\bigskip
\begin{minipage}[t]{0.31\textwidth}\centering
\begin{tikzpicture}[x=0.1075cm,y=0.1075cm]
\useasboundingbox (0,0) rectangle (40,40);
\filldraw[fill=black!7,draw=black,line width=.35pt] (0,0) rectangle (20,20);
\node[font=\scriptsize,inner sep=0pt] at (10,10) {$0$};
\filldraw[fill=black!7,draw=black,line width=.35pt] (22,0) rectangle (40,18);
\node[font=\scriptsize,inner sep=0pt] at (31,9) {$1$};
\filldraw[fill=black!7,draw=black,line width=.35pt] (0,25) rectangle (15,40);
\node[font=\scriptsize,inner sep=0pt] at (7.5,32.5) {$2$};
\filldraw[fill=black!7,draw=black,line width=.35pt] (15,27) rectangle (28,40);
\node[font=\scriptsize,inner sep=0pt] at (21.5,33.5) {$3$};
\filldraw[fill=black!7,draw=black,line width=.35pt] (29,29) rectangle (40,40);
\node[font=\scriptsize,inner sep=0pt] at (34.5,34.5) {$4$};
\filldraw[fill=black!7,draw=black,line width=.35pt] (30,18) rectangle (40,28);
\node[font=\scriptsize,inner sep=0pt] at (35,23) {$5$};
\filldraw[fill=black!7,draw=black,line width=.35pt] (22,18) rectangle (30,26);
\node[font=\scriptsize,inner sep=0pt] at (26,22) {$6$};
\filldraw[fill=black!30,draw=black,line width=.35pt] (15,20) rectangle (22,27);
\node[font=\scriptsize,inner sep=0pt] at (18.5,23.5) {$7$};
\draw[line width=.7pt] (0,0) rectangle (40,40);
\end{tikzpicture}\par
\smallskip{\footnotesize (d) $\beta$, $L=40$\\ $s=(20,18,15,13,11,10,8,7)$}
\end{minipage}
\qquad
\begin{minipage}[t]{0.31\textwidth}\centering
\begin{tikzpicture}[x=0.1049cm,y=0.1049cm]
\useasboundingbox (0,0) rectangle (41,41);
\filldraw[fill=black!7,draw=black,line width=.35pt] (0,0) rectangle (21,21);
\node[font=\scriptsize,inner sep=0pt] at (10.5,10.5) {$0$};
\filldraw[fill=black!7,draw=black,line width=.35pt] (23,10) rectangle (41,28);
\node[font=\scriptsize,inner sep=0pt] at (32,19) {$1$};
\filldraw[fill=black!7,draw=black,line width=.35pt] (0,26) rectangle (15,41);
\node[font=\scriptsize,inner sep=0pt] at (7.5,33.5) {$2$};
\filldraw[fill=black!7,draw=black,line width=.35pt] (28,28) rectangle (41,41);
\node[font=\scriptsize,inner sep=0pt] at (34.5,34.5) {$3$};
\filldraw[fill=black!7,draw=black,line width=.35pt] (15,29) rectangle (27,41);
\node[font=\scriptsize,inner sep=0pt] at (21,35) {$4$};
\filldraw[fill=black!7,draw=black,line width=.35pt] (31,0) rectangle (41,10);
\node[font=\scriptsize,inner sep=0pt] at (36,5) {$5$};
\filldraw[fill=black!7,draw=black,line width=.35pt] (21,0) rectangle (30,9);
\node[font=\scriptsize,inner sep=0pt] at (25.5,4.5) {$6$};
\filldraw[fill=black!30,draw=black,line width=.35pt] (15,21) rectangle (23,29);
\node[font=\scriptsize,inner sep=0pt] at (19,25) {$7$};
\draw[line width=.7pt] (0,0) rectangle (41,41);
\end{tikzpicture}\par
\smallskip{\footnotesize (e) $\alpha$, $L=41$\\ $s=(21,18,15,13,12,10,9,8)$}
\end{minipage}
\caption{Top row: packings of seven squares following $\st{top}$,
$\st{center}$, and $\st{right}$ of~\cite[Table~5.1]{BadgerSeven}, each with
side $L_7^*(s)$. Bottom row: packings of eight squares following
$\sigma_\beta$ and $\sigma_\alpha$, each with side $L_8^*(s)$; $Q_7$ is
shaded.}
\label{fig:insert}
\end{figure}

An optimal strategy for eight squares need not restrict to an optimal
strategy for the seven largest squares. Deleting $Q_7$ from $\sigma_\alpha$
leaves a strategy for $Q_0,\ldots,Q_6$ with cost
$\max(L_6^*,\ s_0+s_5+s_6,\ s_1+s_3+s_5)$. At the side vector
$s=(21,18,15,13,12,10,9,8)$ of Figure~\ref{fig:insert}(e), this cost is
$41=L_8^*(s)$, whereas $L_7^*(s)=40$: the indices $5,1,3$ form a vertical
chain of $\sigma_\alpha$ with side sum $41$. In this sense the passage from
seven to eight squares is nonlocal: an optimal arrangement of eight squares
need not extend an optimal arrangement of the seven largest. By contrast,
deleting $Q_7$ from $\sigma_\beta$ leaves the arrangement $\st{center}$
(Figure~\ref{fig:insert}(b),(d)). Section~\ref{sec:eight} shows that the
restrictions of thirteen of the twenty-three strategies have cost functions
different from all three seven-square cost functions.

By~\cite[Theorem~6.2]{BadgerSeven} and Theorem~\ref{thm:main}, the least
sizes of complete families of strategies for six, seven, and eight squares
are $1$, $3$, and $23$. The successive ratios are $3$ and $23/3$, and the
second exceeds the first by the factor $23/9$. The lower-bound proofs grow
as well: the proof for seven squares in~\cite{BadgerSeven} combines $L_6^*$
with three inequalities, and the proof here combines $L_7^*$ with eight new
inequalities and the column bound of~\cite{BadgerSeven}. We do
not know the least size for nine squares. If the ratio stayed at $23/3$, the
least size would be about $176$. If the ratio grew again by
the factor $23/9$, it would be about $19.6$, giving about $450$. Both estimates are speculative, but they suggest that an exact
formula for $L_9^*$ of the same type would be a minimum of hundreds of
maxima of side sums.

For general $n$, a complete family of $m(n)$
strategies decides whether $n$ prescribed squares fit into a prescribed
square by evaluating $m(n)$ strategy costs, each the largest side sum of a
chain in a pair of partial orders. Each strategy can be recorded by $O(n^2)$
order relations, and its cost can be computed in polynomial time: for either
partial order, with vertex weights $s_i$, the largest side sum of a chain is
the weight of a longest path in its directed graph, computed by the usual
recurrence over predecessors. If $m(n)$ were bounded by a polynomial in
$n$, these families would serve as advice of polynomial size for the
strongly NP-complete problem of~\cite{Leung90}. Then
$\mathrm{NP}\subseteq\mathrm{P/poly}$, and the polynomial hierarchy would
collapse to its second level by the Karp--Lipton
theorem~\cite{KarpLipton82}. Thus, unless this collapse occurs, the least size of a complete family of strategies for
$n$ squares is not bounded above by any polynomial in $n$. In the other
direction, the sequence-pair
families of Silvanus and Vygen~\cite{SilvanusVygen20} give complete families
of size $O\bigl(n!\,((11+5\sqrt5)/2)^n/n^6\bigr)$; see~\cite{BadgerSeven}
for a comparison of their notion of completeness with ours.

The rest of the paper is organized as follows. Section~\ref{sec:prelim}
collects the definitions and results of~\cite{BadgerSeven} that we use and
proves two refinements of them. Section~\ref{sec:eight} gives the
twenty-three strategies and their constructions. Section~\ref{sec:lower}
proves the eight new lower bounds. Section~\ref{sec:proof} combines them with
the column bound and $L_7^*$ to prove the matching lower bound by comparing side sums,
completes the proof of Theorem~\ref{thm:main}, and proves the bound
$m\ge23$. The code~\cite{BadgerCode} verifies the constructions in
Table~\ref{tab:coords8}, the packings in the figures, the comparisons
following~\eqref{eq:restrict}, and the computations in
Section~\ref{sec:minimality}.

\section{Preliminaries}\label{sec:prelim}
We recall the conventions and geometric tools of~\cite{BadgerSeven}
without proof, and prove a weighted-chain comparison and a refinement of
its covering-chain lemma.

\subsection{Packings and strategies}
For each positive side $s_i$, write
$Q_i=[x_i,x_i+s_i]\times[y_i,y_i+s_i]$.
These squares form a packing in $[0,L]^2$ exactly when
\begin{equation}\label{eq:contain}
0\le x_i,y_i\le L-s_i
\end{equation}
and, for each pair $i\ne j$, at least one of
\begin{equation}\label{eq:sep}
x_i+s_i\le x_j,\quad x_j+s_j\le x_i,\quad
y_i+s_i\le y_j,\quad y_j+s_j\le y_i.
\end{equation}
\begin{definition}[Strategies and complete families, {\cite[Definition~2.3]{BadgerSeven}}]\label{def:strategy}\label{def:complete}
A \emph{strategy} for $n$ squares is a pair
$\sigma=(\prec_{\mathrm h},\prec_{\mathrm v})$ of strict partial orders on
$\{0,\ldots,n-1\}$ such that every pair of distinct indices is comparable
in at least one order. The relations prescribe horizontal and vertical
separation. A \emph{chain} of $\sigma$ is a nonempty set of indices totally
ordered by either relation. Write $\mathcal C(\sigma)$ for the set of these
chains and define the \emph{cost} of $\sigma$ at a vector
$v\in[0,\infty)^n$ by
$\ell_\sigma(v)=\max_{I\in\mathcal C(\sigma)}\sum_{i\in I}v_i$. For
$s\in\ell_2^\downarrow$, we write $\ell_\sigma(s)$ for its value at
$(s_0,\ldots,s_{n-1})$.
A family $\mathcal S$ is \emph{complete} if
$L_n^*(s)=\min_{\sigma\in\mathcal S}\ell_\sigma(s)$ for every
$s\in\ell_2^\downarrow$.
\end{definition}

\begin{lemma}[Strategy representation, {\cite[Proposition~2.5]{BadgerSeven}}]\label{lem:strategy}
For every $n\ge1$ and every $s\in\ell_2^\downarrow$,
\begin{equation}\label{eq:strategy-formula}
L_n^*(s)=\min_{\sigma\in\mathfrak S_n}\ell_\sigma(s),
\end{equation}
where $\mathfrak S_n$ is the finite set of all strategies. In particular,
$L_n^*(s)$ is a continuous, positively homogeneous, and piecewise linear
function of $(s_0,\ldots,s_{n-1})$.
\end{lemma}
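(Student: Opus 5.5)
The statement to prove is Lemma~\ref{lem:strategy}. I would prove the identity~\eqref{eq:strategy-formula} by two inequalities: every strategy yields a packing of side $\ell_\sigma(s)$, and every packing realizes some strategy whose cost is at most its side. Throughout, squares with $s_i=0$ can be ignored or placed at a corner, since any chain containing them has the same side sum without them.

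\emph{Upper bound.} Fix a strategy $\sigma=(\prec_{\mathrm h},\prec_{\mathrm v})$ and put $L=\ell_\sigma(s)$. Define the coordinates by longest-path recursions:
\[
x_i=\max\Bigl(0,\ \max_{j\prec_{\mathrm h} i}(x_j+s_j)\Bigr),
\]
and define $y_i$ in the same way using $\prec_{\mathrm v}$. These are well defined because strict partial orders are acyclic. By induction, $x_i+s_i$ is the largest side sum of a $\prec_{\mathrm h}$-chain whose maximum element is $i$. Hence $x_i+s_i\le L$, and similarly $y_i+s_i\le L$, which gives~\eqref{eq:contain}. If $j\prec_{\mathrm h} i$, then $x_j+s_j\le x_i$ by construction, and likewise for the vertical order. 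Every pair is comparable in some order, so~\eqref{eq:sep} holds. Thus $L_n^*(s)\le\ell_\sigma(s)$.

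\emph{Lower bound.} The infimum defining $L_n^*(s)$ is attained by compactness, since the feasible set of positions is closed for each $L$. Take an optimal packing in $[0,L]^2$. Define $i\prec_{\mathrm h} j$ when $x_i+s_i\le x_j$ and $s_i,s_j>0$, and define $\prec_{\mathrm v}$ analogously. Both relations are strict partial orders by transitivity of the inequalities and positivity of the sides. Every pair of distinct indices is comparable by~\eqref{eq:sep}. A $\prec_{\mathrm h}$-chain $i_1\prec\cdots\prec i_k$ occupies disjoint consecutive intervals in $[0,L]$, so $\sum s_{i_r}\le L$. Hence $\ell_\sigma(s)\le L=L_n^*(s)$.

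\emph{Consequences.} The set $\mathfrak S_n$ is finite, since it consists of pairs of relations on a finite set. Each $\ell_\sigma$ is a maximum of finitely many linear forms, so it is continuous, positively homogeneous, convex, and piecewise linear. A finite minimum of such functions keeps continuity, positive homogeneity, and piecewise linearity. The only delicate point is the treatment of zero sides; I expect this to be the main obstacle, though it is a minor one. I would handle it by restricting the orders to indices with positive sides and then making zero-side indices comparable arbitrarily, for instance by placing them first in the horizontal order. This does not change any chain sum.
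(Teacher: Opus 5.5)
Your proof is correct and follows the route the paper relies on (it cites \cite[Proposition~2.5]{BadgerSeven} rather than reproving it): a longest-path placement realizes every strategy at side $\ell_\sigma(s)$, as in \cite[Lemma~2.4]{BadgerSeven}, and the strategy of any packing has cost at most its side by the chain inequality; your treatment of zero sides is adequate. The compactness step is unnecessary, since applying your lower-bound argument to an arbitrary packing of side $L$ already gives $\min_\sigma\ell_\sigma(s)\le L$, and attainment of $L_n^*(s)$ then follows from your upper-bound construction.
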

By continuity, a bound $L_n^*\ge f$ or identity $L_n^*=f$, with $f$
continuous, need only be proved for $s_{n-1}>0$.
We therefore assume positive sides in the geometric arguments.

\subsection{Chains, clearing, and pins}
Fix a packing of squares $Q_i$ of sides $s_i$ in $[0,L]^2$. For a set of
indices $I$, call $\sum_{i\in I}s_i$ its \emph{side sum}; the same term
applies to the corresponding collection $\{Q_i:i\in I\}$.
We say that $Q_i$ lies \emph{left of} $Q_j$ when $x_i+s_i\le x_j$, and
\emph{below} $Q_j$ when $y_i+s_i\le y_j$. Two squares are \emph{x-disjoint}
when one lies left of the other, that is, when the interiors of their
horizontal projections are disjoint, and \emph{x-overlapping} otherwise.
Define y-disjointness and y-overlap using vertical projections. Every pair
of squares in a packing is x-disjoint or y-disjoint (possibly both).
A \emph{horizontal chain} is a collection of pairwise x-disjoint squares,
and a \emph{vertical chain} is a collection of pairwise y-disjoint squares.
For positive sides, the left-of and below relations define the
\emph{strategy of the packing}~\cite[Section~2.2]{BadgerSeven}.
Its chains are exactly the nonempty horizontal and vertical chains.

\begin{lemma}[Chain inequality, {\cite[Lemma~3.1]{BadgerSeven}}]\label{lem:chain}
In a packing of side $L$, every horizontal or vertical chain has side sum
at most $L$. In particular, $L^*(s)\ge s_0+s_1$.
\end{lemma}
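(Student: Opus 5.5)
The plan is to project the packing onto the axis in which the chain is ordered and compare lengths. Suppose the chain is horizontal, consisting of pairwise x-disjoint squares $Q_i$, $i\in I$, inside $[0,L]^2$. Each $Q_i$ has horizontal projection $[x_i,x_i+s_i]$. By~\eqref{eq:contain}, this interval lies in $[0,L]$.

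By definition of a horizontal chain, for $i\ne j$ in $I$ one of $x_i+s_i\le x_j$ or $x_j+s_j\le x_i$ holds. So the open intervals $(x_i,x_i+s_i)$ are pairwise disjoint subsets of $(0,L)$. Since a finite chain is totally ordered by the left-of relation, we can list it as $i_1,\dots,i_k$ with $x_{i_1}+s_{i_1}\le x_{i_2}$, and so on. Telescoping then gives
\[
\sum_{i\in I}s_i \le (x_{i_k}+s_{i_k})-x_{i_1}\le L .
\]
Alternatively, additivity of one-dimensional Lebesgue measure over disjoint open intervals gives the same bound directly. That version also covers infinite chains: every finite subfamily has sum at most $L$, so the full sum does as well. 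The vertical case is identical with $y$ in place of $x$. Squares with $s_i=0$ are absent and contribute nothing.

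For the consequence, consider any packing of $Q_0$ and $Q_1$ (together with the rest) in $[0,L]^2$. By~\eqref{eq:sep}, this pair is x-disjoint or y-disjoint, so $\{Q_0,Q_1\}$ is a horizontal or vertical chain. The first part then gives $L\ge s_0+s_1$. Taking the infimum over all packings yields $L^*(s)\ge s_0+s_1$.

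There is no real obstacle here. The only point needing care is the remark that the left-of relation totally orders a chain, which follows from pairwise x-disjointness together with transitivity along the real line.
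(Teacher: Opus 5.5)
Your proof is correct and is the intended argument. The paper quotes this lemma from \cite[Lemma~3.1]{BadgerSeven} without proof, and its notion of x-disjointness (disjoint interiors of horizontal projections) makes the underlying reasoning exactly yours: the projections of a chain have pairwise disjoint interiors inside $[0,L]$, so their lengths sum to at most $L$. Your telescoping and measure versions, the finite-subfamily remark for infinite chains, and the deduction of $L^*(s)\ge s_0+s_1$ from the fact that every pair of squares is x-disjoint or y-disjoint are all fine; in the degenerate case $s_1=0$ the bound follows directly from containment.
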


\begin{lemma}[Column bound, {\cite[Proposition~3.7]{BadgerSeven}}]\label{lem:column}
For every $n\ge6$ and $s\in\ell_2^\downarrow$,
\[
L^*(s)\ge L_n^*(s)\ge\min\bigl(s_1+s_3+s_{n-1},\ P_n(s)\bigr),\quad
\text{where }P_n(s)=s_1+\sum_{i=4}^{n-1}s_i.
\]
\end{lemma}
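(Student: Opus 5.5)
The first inequality $L^*(s)\ge L_n^*(s)$ is immediate: deleting squares from a packing leaves a packing in the same container. For the second, I will fix a packing of $Q_0,\ldots,Q_{n-1}$ (positive sides, by continuity) in $[0,L]^2$. I will assume $L<s_1+s_3+s_{n-1}$ and aim to show $L\ge P_n(s)=s_1+s_4+\cdots+s_{n-1}$. The tool throughout is the chain inequality, Lemma~\ref{lem:chain}, applied to chains forced by the smallness of $L$.

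\emph{Step 1: the four largest squares.} Every triple from $\{Q_0,Q_1,Q_2,Q_3\}$ has side sum at least $s_1+s_2+s_3>s_1+s_3+s_{n-1}>L$, so no three of them form a chain. More generally, a chain containing two of $Q_0,Q_1,Q_2,Q_3$ and any further square $Q_k$ has side sum at least $s_a+s_b+s_{n-1}$ with $a<b\le3$. This exceeds $L$ unless $\{a,b\}=\{2,3\}$. A short combinatorial check on the strategy induced on $\{0,1,2,3\}$ then shows that these four squares sit in a $2\times2$ pattern. Each is x-disjoint from one neighbour and y-disjoint from another, and $Q_1$ has two big neighbours $Q_a,Q_b$ with $s_1+s_a,\,s_1+s_b\ge s_1+s_3$.

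\emph{Step 2: the gaps are too narrow.} If $Q_1$ is x-disjoint from a big neighbour $Q_a$, the horizontal gap between them has width at most $L-s_1-s_a\le L-s_1-s_3<s_{n-1}$. The same holds vertically. So no small square $Q_k$ with $k\ge4$ fits strictly between $Q_1$ and either big neighbour. I will then argue that every $Q_k$ with $k\ge4$ overlaps the projections of $Q_1$ in a controlled way. Concretely, the small squares are confined to the region cut off by the two lines through the far edges of $Q_1$'s neighbours. In that region each small square, together with $Q_1$, must be pairwise separated along a single common axis, with that axis determined by which side of $Q_1$ is open.

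\emph{Step 3: projection.} Once all of $Q_1,Q_4,\ldots,Q_{n-1}$ lie in one horizontal or one vertical chain, Lemma~\ref{lem:chain} gives $L\ge s_1+s_4+\cdots+s_{n-1}=P_n(s)$. If instead some small squares escape into an opposite strip, I will show that strip, together with the big squares bounding it, yields a chain of side sum at least $s_1+s_3+s_{n-1}$, contradicting the assumption on $L$.

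The main obstacle is Step 2, and especially its transition into Step 3: ruling out that the small squares split between two different axes relative to $Q_1$. I expect to handle this by a case analysis over the few $2\times2$ patterns from Step 1. In each case I will use that a small square placed in a corner region opposite $Q_1$ lies in a chain with $Q_1$ and $Q_3$ (or with a bigger pair), and therefore already violates $L<s_1+s_3+s_{n-1}$. The hypothesis $n\ge6$ is needed so that there are at least two small squares, making the conclusion nontrivial; the case analysis itself does not depend on $n$.
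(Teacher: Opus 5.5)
Your Step~3 rests on a dichotomy that is false: for a packing with $L<s_1+s_3+s_{n-1}$, either $Q_1,Q_4,\ldots,Q_{n-1}$ form a chain, or some chain clears $s_1+s_3+s_{n-1}$. Take $n=6$, $s=(10,9.8,9,8.5,2,2)$, $L=20<s_1+s_3+s_5=20.3$, and
$Q_0=[0,10]^2$, $Q_1=[10.2,20]\times[0.5,10.3]$, $Q_2=[11,20]\times[11,20]$, $Q_3=[2,10.5]\times[11.5,20]$, $Q_4=[0,2]\times[10,12]$, $Q_5=[0,2]\times[12,14]$.
This is a genuine packing, so by Lemma~\ref{lem:chain} no chain clears $20.3$, and your ``escape'' branch can never end in a contradiction. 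Yet $\{1,4,5\}$ is not a chain: $Q_4,Q_5$ x-overlap, while $Q_1,Q_4$ y-overlap on $[10,10.3]$. Both small squares lie left of $Q_1$, so each is separated from $Q_1$ along the same axis. Even the conclusion you announce in Step~2 therefore does not yield a chain; what is missing is separation of the small squares from one another along that axis. In this packing the column sits on $Q_0$: $\{0,4,5\}$ is a vertical chain, and $s_0\ge s_1$ gives the bound. The argument must allow a column anchored at $Q_0$.

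The ingredient that lines up the small squares is also absent. Your triple exclusions say nothing about two small squares; you need a four-square exclusion. From $s_0+s_1\le L<s_1+s_3+s_{n-1}$ you get $s_1\le s_0<s_3+s_{n-1}\le s_2+s_{n-1}$. Hence any chain with two large and two small squares has side sum at least $s_2+s_3+2s_{n-1}>s_1+s_3+s_{n-1}>L$.

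Step~2 also looks at the wrong gaps. The only large pair that can share a chain with a small square is $\{2,3\}$, exactly the pair Step~1 sets aside. The small squares form chains with $Q_2,Q_3$ (right of $Q_3$ in $\sigma_\psi$, left of it in the example above), not in gaps beside $Q_1$.

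A route that works with the tools of Section~\ref{sec:prelim} is as follows. For $\Sigma=s_1+s_3+s_{n-1}$ the hypotheses of Lemma~\ref{lem:pane} hold, as do those of Lemma~\ref{lem:fixed-core} with every small index in the role of $m$. So after cornering, $Q_0=[0,s_0]^2$ and $x_1\ge s_0>y_1$, and every small square is x-disjoint from $p,c$ and y-overlaps both. Two x-disjoint small squares would form a horizontal chain with $p,c$, excluded by the four-square bound. So the small squares pairwise x-overlap and form a vertical stack. Squares in different slots would be x-disjoint, so the whole stack lies in one slot: left of $p$, between $p$ and $c$, or right of $c$.
\begin{itemize}
\item Left of $p$, the stack lies over $Q_0$, since $x_p<s_0$.
\item Between $p$ and $c$, a small $m$ x-disjoint from $Q_1$ would give either $s_3+s_m\le x_1\le L-s_1$ or $Q_1$ left of $c$; both are impossible.
\item Right of $c$, one has $x_m>x_1$, and $m$ right of $Q_1$ would make $\{0,1,m\}$ a chain.
\end{itemize}
In the last two slots the stack therefore lies over $Q_1$. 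In every case the stack together with $Q_0$ or $Q_1$ is a vertical chain of side sum at least $P_n(s)$.
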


To prove a lower bound $L^*(s)\ge\Sigma$, suppose a packing has side
$L<\Sigma$. A family of indices $I$, or the corresponding squares,
\emph{clears} $\Sigma$ when $\sum_{i\in I}s_i\ge\Sigma$, and therefore
cannot form a horizontal or vertical chain.
These exclusions force overlaps, and disjointness of square interiors
then gives new chains. For example, suppose $Q_a,Q_b$ are y-disjoint and $\{a,b,e\}$ clears $\Sigma$.
Then $Q_e$ must y-overlap $Q_a$ or $Q_b$, since otherwise the three
squares would form a vertical chain with side sum exceeding $L$.
Two y-overlapping squares are x-disjoint. We call the
forced y-overlap with one of $Q_a,Q_b$ a \emph{vertical pin} of $Q_e$
to the pair $Q_a,Q_b$. Pins to longer chains and horizontal pins are
defined analogously.

We will sometimes translate a square to a wall while keeping the other
squares fixed. For instance, a \emph{blocker to the right} of $Q_i$
is a square that lies right of $Q_i$ and y-overlaps it: moving $Q_i$ right
would eventually meet it. If there is no
such blocker, $Q_i$ can be moved to the right wall without disturbing
the packing. Blockers in other directions are defined analogously.
Reflections and interchanges of the axes move the entire packing.
Because clearing depends only on the side lengths, an exclusion based
on a clearing set remains valid during any motion that preserves a packing.

\begin{lemma}[Weighted-chain comparison]\label{lem:weighted}
Fix a packing of squares $Q_0,\ldots,Q_{n-1}$ with positive sides, and let
$w_0,\ldots,w_{n-1}$ be nonnegative weights, not necessarily in decreasing
order. If $\sum_{i\in I}w_i\le M$ for every horizontal and vertical chain
$\{Q_i:i\in I\}$ of the packing, then squares of side lengths
$w_0,\ldots,w_{n-1}$ can be packed in $[0,M]^2$. In particular,
$\sum_iw_i^2\le M^2$.
\end{lemma}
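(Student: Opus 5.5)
The plan is to build the packing of side-$w_i$ squares directly from the combinatorics of the given packing. Let $\sigma=(\prec_{\mathrm h},\prec_{\mathrm v})$ be the strategy of the given packing: $i\prec_{\mathrm h}j$ when $Q_i$ lies left of $Q_j$, and $i\prec_{\mathrm v}j$ when $Q_i$ lies below $Q_j$. Both relations are strict partial orders; transitivity holds because $x_i+s_i\le x_j$ and $x_j+s_j\le x_k$ give $x_i+s_i\le x_k$, using $s_j>0$. Every pair of distinct indices is comparable in at least one order. The chains of $\sigma$ are exactly the horizontal and vertical chains of the packing, so the hypothesis says precisely that $\ell_\sigma(w)\le M$.

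Define new coordinates by longest weighted paths:
\[
X_i=\max\Bigl\{\sum_{k\in I}w_k:\ I\text{ a }\prec_{\mathrm h}\text{-chain whose elements all precede }i\Bigr\},
\]
with $X_i=0$ if $i$ has no $\prec_{\mathrm h}$-predecessor, and define $Y_i$ analogously with $\prec_{\mathrm v}$. Place $Q_i'=[X_i,X_i+w_i]\times[Y_i,Y_i+w_i]$. Two properties need checking.
\begin{itemize}
\item[(i)] \emph{Containment.} Any chain of predecessors of $i$ together with $i$ is a chain, so $X_i+w_i\le M$, and similarly $Y_i+w_i\le M$. Hence $0\le X_i,Y_i\le M-w_i$.
\item[(ii)] \emph{Separation.} If $i\prec_{\mathrm h}j$, then any chain preceding $i$, with $i$ appended, is a chain preceding $j$. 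This gives $X_j\ge X_i+w_i$, so the condition of \eqref{eq:sep} holds for the pair in the horizontal direction, and likewise in the vertical direction. Since every pair is comparable in some order, every pair is separated.
\end{itemize}
Squares with $w_i=0$ are degenerate and harmless. Thus the $Q_i'$ form a packing in $[0,M]^2$.

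The area statement is then immediate: the interiors of the $Q_i'$ are pairwise disjoint subsets of $[0,M]^2$, so $\sum_i w_i^2\le M^2$.

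The only delicate point is step (ii). It uses transitivity of the left-of relation, which is where positivity of the original sides enters. It also requires the maximum in the definition of $X_i$ to range over chains of predecessors, and not over arbitrary sets of them, so that appending $i$ keeps the set a chain. The rest is bookkeeping.
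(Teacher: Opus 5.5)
Your proof is correct and follows the paper's route: pass to the strategy $\sigma$ of the packing, observe that the hypothesis says exactly $\ell_\sigma(w)\le M$, and realize $\sigma$ with sides $w_i$ in $[0,\ell_\sigma(w)]^2$. The paper cites \cite[Lemma~2.4]{BadgerSeven} for this realization, while you prove it directly with longest-chain coordinates, which makes your version self-contained.

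One small correction on where positivity enters. Transitivity of ``left of'' needs only $s_j\ge0$, since $x_i+s_i\le x_j\le x_j+s_j\le x_k$. Positivity of the sides is what makes the relation irreflexive, hence a strict partial order, and your step (i) implicitly relies on this, since $i$ must not be its own predecessor.
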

\begin{proof}
Set $w=(w_0,\ldots,w_{n-1})$, and let $\sigma$ be the strategy of the original
packing. Then $\ell_\sigma(w)\le M$, and~\cite[Lemma~2.4]{BadgerSeven} gives a
packing of squares of side lengths $w_0,\ldots,w_{n-1}$ in
$[0,\ell_\sigma(w)]^2\subseteq[0,M]^2$. Comparing areas gives the final
assertion.
\end{proof}

\subsection{Cornering and covering chains}
\begin{lemma}[Cornering criterion, {\cite[Lemma~3.5]{BadgerSeven}}]\label{lem:pane}
Fix a packing of side $L<\Sigma$. Suppose every triple among
$Q_0,Q_1,Q_2,Q_3$ clears $\Sigma$, as does every triple $\{0,b,j\}$ with
$b\in\{1,2,3\}$ and $j\notin\{0,b\}$ in the full packing.
Then $Q_0$ can be translated to a corner while all other squares remain fixed.
\end{lemma}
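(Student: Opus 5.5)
The plan is to argue by contradiction, using the given clearing hypotheses to show that $Q_0$ has no blockers in two perpendicular directions, say left and below after a reflection. Then $Q_0$ can be slid first to one wall and then to the other. Fix a packing of side $L<\Sigma$. Since every triple among $Q_0,\dots,Q_3$ clears $\Sigma$, no three of these four squares form a horizontal or vertical chain. The first step is to record the pairwise relations among $Q_0,\dots,Q_3$. For each pair $\{0,b\}$, $b\in\{1,2,3\}$, the two squares are x-disjoint or y-disjoint. Up to reflections, which do not affect the hypotheses, I would fix the position of $Q_1$ relative to $Q_0$. I would then pin $Q_2$ and $Q_3$ to the pair $\{Q_0,Q_1\}$ using the pin mechanism of the preliminaries: if $Q_0,Q_1$ are y-disjoint, then $Q_j$ must y-overlap $Q_0$ or $Q_1$, and so on. Since no three of the four form a chain, this should force essentially a $2\times2$ ``pane'' configuration. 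In it, $Q_0$ has one square among $Q_1,Q_2,Q_3$ horizontally adjacent to it, one vertically adjacent, and the third diagonal, and all of them lie in a common quadrant relative to $Q_0$ (say right/above).

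The second step is to show that no square lies left of $Q_0$ while y-overlapping it, and symmetrically that none lies below $Q_0$ while x-overlapping it. Suppose $Q_j$ ($j\notin\{0\}$) is a blocker to the left of $Q_0$. If $j\in\{1,2,3\}$, this contradicts the pane configuration from step one. Otherwise $Q_j$ lies left of $Q_0$, and $Q_0$ lies left of some $Q_b$, $b\in\{1,2,3\}$, namely the horizontal neighbour of $Q_0$ in the pane. Then $\{Q_j,Q_0,Q_b\}$ is a horizontal chain. This contradicts Lemma~\ref{lem:chain}, because $\{0,b,j\}$ clears $\Sigma>L$. The same argument with the vertical neighbour excludes blockers below.

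The final step is to slide $Q_0$ left to the wall, since it has no left blocker. Sliding only shrinks its $x$-coordinate, so the below-blocker analysis must be redone in the new position. However, the argument in step two used only the chain relations with the pane squares. These relations persist under the motion: $Q_0$ stays left of its horizontal neighbour, and its vertical neighbour remains above it, since the $y$-projections are unchanged. The paper notes that clearing exclusions persist under packing-preserving motions, so the below-blocker argument still applies. Sliding $Q_0$ down then puts it in a corner.

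I expect the main obstacle to be step one. The case analysis must guarantee that the horizontal neighbour and vertical neighbour of $Q_0$ lie on consistent sides, namely right and above rather than right and left. It must also handle degenerate cases in which a pair is both x- and y-disjoint. If the pane is not forced in such a case, I would fall back on arguing directly about blockers. For example, if $Q_0$ has blockers both left and right, each together with $Q_0$ and some $Q_b$ should yield a forbidden chain. Once the direction choices are shown consistent, the rest is routine.
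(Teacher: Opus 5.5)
Your plan works, but Step one is both overstated and unproved. The configuration you describe is not forced. In Figure~\ref{fig:insert}(a), with $\Sigma=39$, all the hypotheses hold. Yet $Q_1,Q_2$ are stacked to the right of $Q_0$, $Q_3$ is above it, and no square is diagonal. What Step two actually uses is weaker, and it takes two lines.

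Suppose $Q_1,Q_2,Q_3$ all x-overlapped $Q_0$. Then they would all be y-disjoint from it. Two of them that were y-disjoint from each other would form a vertical triple with $Q_0$, so they pairwise y-overlap, and $\{1,2,3\}$ is a horizontal chain, which is excluded. Hence some $Q_b$ is x-disjoint from $Q_0$. By symmetry some $Q_{b'}$ is y-disjoint from it; $b=b'$ is allowed, and that is your degenerate case. A horizontal reflection puts $Q_b$ on the right, and an independent vertical reflection puts $Q_{b'}$ on top, so there is no consistency issue. Your Step two then shows that no square at all, blocker or not, lies left of or below $Q_0$. Since ``below'' depends only on $y$-coordinates, the slide to the left cannot create a blocker underneath, and your Step three goes through.

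The paper quotes this lemma from \cite{BadgerSeven} without proof. The cornering arguments it does write out, in Lemmas~\ref{lem:exceptional} and~\ref{lem:fourcore-frame}, go the other way round. They push $Q_0$ away from $Q_1$ to a wall, which the triples $\{0,1,j\}$ permit. If $Q_0$ is then blocked on both sides along that wall, each $Q_b$ must x-overlap $Q_0$, since otherwise it makes a chain with $Q_0$ and the opposite blocker. Hence each $Q_b$ lies above $Q_0$, and the same pairwise argument produces the horizontal triple $\{1,2,3\}$.

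Both routes end with the same contradiction. Yours analyzes the unmoved configuration and obtains a stronger intermediate fact: a quadrant containing no other square, which also identifies the target corner in advance. The wall-first route uses the exclusions involving $Q_2,Q_3$ only to rule out a two-sided blockage, and that is why it adapts to weaker hypotheses. In Lemma~\ref{lem:exceptional}, $\{0,3,7\}$ need not clear. There the only large square x-disjoint from $Q_0$ may be $Q_3$, with $Q_7$ on the opposite side (Figure~\ref{fig:pincer}), and your quadrant argument would break down.
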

\begin{lemma}[Relations forced by four large squares, {\cite[Lemma~3.6]{BadgerSeven}}]\label{lem:fixed-core}
Suppose $Q_0=[0,s_0]^2$ in a packing of $Q_0,\ldots,Q_7$ of side
$L<\Sigma$. Assume every triple among $0,1,2,3$ clears $\Sigma$, as do
\[
\{0,1,m\},\qquad \{0,b,m\},\qquad \{1,b,m\}
\quad\text{for }b=2,3\text{ and }m=4,5.
\]
After possibly interchanging axes, $x_1\ge s_0$ and $y_1<s_0$.
There are distinct $p,c\in\{2,3\}$ such that $p$ x-overlaps $0$,
$c$ x-overlaps $1$ and lies above $1$, and $p$ lies left of $c$.
Each of $4,5$ is x-disjoint from both $p,c$ and y-overlaps both.
These relations hold in the original full packing, before any translation of $Q_1$.
\end{lemma}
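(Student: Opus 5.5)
The plan is to derive every relation from exclusions: a clearing triple can form neither a horizontal nor a vertical chain, and two squares that overlap in one projection must be disjoint in the other. We keep $Q_0=[0,s_0]^2$ fixed at the corner throughout. Then any square disjoint from $Q_0$ in the $x$-projection lies to its right, and any square disjoint from it in the $y$-projection lies above it. Because no translations are performed, all conclusions hold in the original packing.

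\textbf{Step 1: placing $Q_1$.} Since $Q_0,Q_1$ are disjoint in some projection, after interchanging axes we may assume they are $x$-disjoint, so $x_1\ge s_0$. To get $y_1<s_0$, suppose instead that $y_1\ge s_0$. Then $\{0,1\}$ is both a horizontal and a vertical chain. Because $\{0,1,b\}$ clears $\Sigma$ for $b=2,3$, each of $Q_2,Q_3$ must $x$-overlap one of $Q_0,Q_1$ and $y$-overlap one of them. Overlapping $Q_0$ in both projections is impossible, and so is overlapping $Q_1$ in both. Hence each of $Q_2,Q_3$ either sits above $Q_0$ beside $Q_1$, or sits right of $Q_0$ below $Q_1$. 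Either choice places it in a position lying in the region $[0,x_1]\times[s_0,\cdot]$ or $[s_0,\cdot]\times[0,y_1]$. We then compare $Q_2$ and $Q_3$:
\begin{itemize}
\item if both are of the first kind, then $\{0,2,3\}$ or $\{1,2,3\}$ becomes a chain;
\item if they are of opposite kinds, we use the side inequalities $s_2,s_3\le s_1\le s_0$ and the clearing triples $\{0,b,m\}$ and $\{1,b,m\}$ to force a forbidden chain.
\end{itemize}
We expect this to be the fiddliest case, and we would first try to show that an opposite-kind configuration forces $\{0,2,3\}$ or $\{1,2,3\}$ to be a horizontal or vertical chain. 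If that fails, we would allow interchanging the axes once more, since in the doubly disjoint case the roles of $x$ and $y$ are symmetric.

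\textbf{Step 2: the indices $p,c$.} Since $\{0,1\}$ is a horizontal chain and $\{0,1,b\}$ clears $\Sigma$, each $b\in\{2,3\}$ is horizontally pinned: it $x$-overlaps $Q_0$ or $Q_1$, and is then $y$-disjoint from that square.
\begin{itemize}
\item Suppose both $Q_2$ and $Q_3$ $x$-overlap $Q_0$. Then both lie above $Q_0$, so they must be $y$-overlapping, since otherwise $\{0,2,3\}$ would be a vertical chain. Hence they are $x$-disjoint, and $\{0\}\cup\{2,3\}$ forces $Q_1$ into an impossible position.
\item The case where both $x$-overlap $Q_1$ is handled the same way, using $y_1<s_0$. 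A square $x$-overlapping $Q_1$ and lying below it would $y$-overlap $Q_0$, and together with the other square this yields a chain with $0$ or $1$.
\end{itemize}
This gives distinct $p,c$ with $p$ $x$-overlapping $Q_0$ and $c$ $x$-overlapping $Q_1$. The same argument excluding "below" shows that $c$ lies above $Q_1$. Finally, $\{0,p\}$ is a vertical chain and $\{0,p,c\}$ clears $\Sigma$, so $c$ must $y$-overlap $p$ (it cannot $y$-overlap $Q_0$ while above $Q_1$ beyond height $s_0$). Hence $p,c$ are $x$-disjoint, and comparing with $Q_0,Q_1$ puts $p$ left of $c$.

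\textbf{Step 3: squares $4$ and $5$.} Fix $m\in\{4,5\}$ and apply vertical pins to the two vertical chains.
\begin{itemize}
\item From $\{0,p\}$ and the clearing set $\{0,p,m\}$, $Q_m$ $y$-overlaps $Q_0$ or $Q_p$.
\item From $\{1,c\}$ and $\{1,c,m\}$, $Q_m$ $y$-overlaps $Q_1$ or $Q_c$.
\item From the horizontal chain $\{0,1\}$ and $\{0,1,m\}$, $Q_m$ $x$-overlaps $Q_0$ or $Q_1$, so it is $y$-disjoint from that one.
\end{itemize}
Combining these three pins with the known positions, we eliminate the options "overlap $Q_0$" and "overlap $Q_1$" in turn. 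This leaves $Q_m$ $y$-overlapping both $Q_p$ and $Q_c$, hence $x$-disjoint from both, as claimed.
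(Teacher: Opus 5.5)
The paper quotes this lemma from \cite{BadgerSeven} without proof, so I judge your argument on its own. It has genuine gaps.

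\textbf{Step 2 rests on a false exclusion.} You treat ``x-overlaps $Q_0$'' and ``x-overlaps $Q_1$'' as exclusive, and you claim that $Q_2,Q_3$ cannot both x-overlap $Q_0$. But a square can straddle the line $x=s_0$, and this case really occurs. In Figure~\ref{fig:stack}(a) every hypothesis holds, $Q_2=[0,28]\times[43,71]$ x-overlaps $Q_0=[0,39]^2$, and $Q_3=[28,53]\times[46,71]$ x-overlaps both $Q_0$ and $Q_1=[39,71]\times[0,32]$. Under its extra hypotheses, the proof of Lemma~\ref{lem:right-stack} even forces $x_c<s_0$. The correct treatment of this case runs as follows:
\begin{itemize}
\item As you showed, $Q_2,Q_3$ are x-disjoint above $Q_0$.
\item $Q_1$ must x-overlap one of them, since $\{1,2,3\}$ is not a horizontal chain.
\item It cannot be the left one, which ends before $x=s_0\le x_1$. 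So the right one is $c$.
\item $c$ lies above $Q_1$ because $y_c\ge s_0>y_1$.
\end{itemize}

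In the other case, where only $p$ x-overlaps $Q_0$, the claim ``$c$ lies above $Q_1$'' does not follow from triples among $0,1,2,3$ at all. Take $s=(10,6,6,6)$, $L=16$, and
$Q_0=[0,10]^2$, $Q_1=[10,16]\times[6,12]$, $Q_2=[0,6]\times[10,16]$, $Q_3=[10,16]\times[0,6]$.
All triple sums are at least $18>L$ and no three of these squares form a chain, yet $c=3$ lies below $Q_1$. Here $Q_m$ is indispensable. By its pin to $\{0,1\}$, $Q_m$ x-overlaps $Q_0$ or $Q_1$. Combined with the pins from $\{0,p,m\}$ and $\{1,c,m\}$, this yields the horizontal chain $\{p,m,1\}$ or $\{0,c,m\}$, respectively. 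The first case also uses that $p$ lies left of $Q_1$, which follows from its pin to the vertical chain $\{c,1\}$.

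\textbf{Step 1 has the same defect.} In the opposite-kind case, your concrete plan is to force $\{0,2,3\}$ or $\{1,2,3\}$ to be a chain. That cannot work. Four unit squares filling $[0,2]^2$, with $Q_1=[1,2]^2$ diagonal to $Q_0$, have all triple sums $3>L$ and no three forming a chain. Interchanging axes does nothing, since the condition $x_1,y_1\ge s_0$ is invariant under it. The missing idea is that $Q_4$ is itself of one of your two kinds, by the same two pins from $\{0,1,4\}$. It therefore shares a kind with some $Q_b$, $b\in\{2,3\}$, and $\{0,b,4\}$ or $\{1,b,4\}$ becomes a chain exactly as in your same-kind case. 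No side inequalities are needed.

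\textbf{Step 3 is only asserted, and your three pins do not suffice.} Take $(s_0,\ldots,s_4)=(10,8,8,6,3)$, $L=18$, and
$Q_0=[0,10]^2$, $Q_1=[10,18]\times[0,8]$, $Q_2=[6,14]\times[10,18]$, $Q_3=[0,6]\times[12,18]$, $Q_4=[14,17]\times[8,11]$.
Then $p=3$ and $c=2$. Every conclusion of your Steps~1--2 holds, and so do all three pins for $m=4$, yet $Q_4$ lies below $Q_p$. Only the triple $\{1,p,4\}$ excludes this, since here $1,4,3$ is a vertical chain, and your sketch never invokes $\{1,p,m\}$.

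\textbf{A smaller point.} Your claim that $c$ ``cannot y-overlap $Q_0$ while above $Q_1$'' is unjustified, because $y_1+s_1$ may be less than $s_0$. When $c$ does y-overlap $Q_0$, the relation ``$p$ left of $c$'' comes instead from the pin of $p$ to the vertical chain $\{1,c\}$.
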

\begin{lemma}[Corner frame, {\cite[Lemma~5.2]{BadgerSeven}}]\label{lem:full-frame}
Fix a packing of $Q_0,\ldots,Q_N$ of side $L<\Sigma$ and an index
$m\le N$, and suppose every triple of indices in $\{0,\ldots,m\}$ that
contains $0$ clears $\Sigma$. Then $Q_0$ can be translated to a corner in the
packing of $Q_0,\ldots,Q_m$ obtained by removing $Q_{m+1},\ldots,Q_N$. After reflection,
let $Q_0=[0,s_0]^2$ and put
\[
\mathcal R=\{i\in\{1,\ldots,m\}:x_i\ge s_0\},\qquad
\mathcal U=\{i\in\{1,\ldots,m\}:y_i\ge s_0\}.
\]
The arms cover $\{1,\ldots,m\}$. The right arm is a vertical chain with
pairwise x-overlaps, the upper arm is a horizontal chain with
pairwise y-overlaps, and $|\mathcal R\cap\mathcal U|\le1$.
If $Q_0$ is already cornered in the packing of $Q_0,\ldots,Q_N$, these
conclusions require no motion.
\end{lemma}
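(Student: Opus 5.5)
The plan is to first corner $Q_0$ by a blocker argument in the packing of $Q_0,\ldots,Q_m$. Then, with $Q_0=[0,s_0]^2$, the structure of the arms should follow directly from the chain inequality (Lemma~\ref{lem:chain}) applied to triples containing $0$. Throughout, the sides are positive and only $Q_0,\ldots,Q_m$ are retained. By hypothesis every triple $\{0,i,j\}$ with $1\le i<j\le m$ clears $\Sigma>L$. Hence no such triple is a horizontal or vertical chain, in the original packing or in any packing obtained from it by motion.

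\emph{Cornering.} Suppose $Q_0$ had a blocker $Q_a$ to the left and a blocker $Q_b$ to the right. Then $Q_a$ lies left of $Q_0$, which lies left of $Q_b$, so $\{a,0,b\}$ is a horizontal chain with side sum at least $\Sigma>L$, contradicting Lemma~\ref{lem:chain}. So at least one horizontal direction has no blocker, say the left after a reflection. During a leftward translation of $Q_0$, the only squares that can meet $Q_0$ are those that lie left of it and y-overlap it, namely blockers. Translating $Q_0$ left until $x_0=0$ therefore preserves the packing. This motion does not change $y_0$, and the clearing exclusions remain valid, as remarked before Lemma~\ref{lem:weighted}. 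The same argument applied vertically to the new packing shows that $Q_0$ has no blocker below or no blocker above. Translating it vertically to that wall keeps $x_0=0$, so $Q_0$ reaches a corner. If $Q_0$ is already cornered in the full packing, no motion is needed, and removing $Q_{m+1},\ldots,Q_N$ changes nothing.

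\emph{Arms.} After reflecting so that $Q_0=[0,s_0]^2$, each $Q_i$ with $1\le i\le m$ is x-disjoint or y-disjoint from $Q_0$. Since $x_i\ge0$ and $y_i\ge0$, this means $x_i\ge s_0$ or $y_i\ge s_0$, so $\mathcal R\cup\mathcal U=\{1,\ldots,m\}$. Take distinct $i,j\in\mathcal R$. If they were x-disjoint, then $\{0,i,j\}$ would be a horizontal chain, since both lie right of $Q_0$, and this triple clears $\Sigma$, which is a contradiction. So $Q_i,Q_j$ x-overlap, hence they are y-disjoint, and $\mathcal R$ is a vertical chain with pairwise x-overlaps. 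The argument with the axes interchanged shows that $\mathcal U$ is a horizontal chain with pairwise y-overlaps. Finally, two distinct indices in $\mathcal R\cap\mathcal U$ would be both x-overlapping and y-overlapping, so two squares with positive sides would have intersecting interiors. Hence $|\mathcal R\cap\mathcal U|\le1$.

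The only delicate step is the motion argument. One must check that a straight translation of $Q_0$ can be obstructed only by blockers in that direction. One must also check that the second, vertical translation does not undo the first: it leaves $x_0=0$ unchanged, and it is a new blocker analysis in the already-moved packing. Both checks are immediate from the separation conditions~\eqref{eq:sep}.
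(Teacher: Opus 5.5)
Your proof is correct: opposite blockers on either axis would form a clearing chain with $Q_0$, so $Q_0$ can be pushed to a wall along each axis in turn, and the arm structure follows from the forbidden triples $\{0,i,j\}$ together with the fact that two squares cannot overlap on both axes. The paper recalls this lemma from~\cite{BadgerSeven} without proof, and your argument is the natural one; it matches the reasoning the paper uses for the arms in the proof of Lemma~\ref{lem:fourcore-frame}.
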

The proof of Proposition~\ref{prop:ell1} needs a refinement
of~\cite[Lemma~5.3]{BadgerSeven} in which the common index of the two
covering chains differs from $m$.
\begin{figure}[!htbp]\centering
\begin{minipage}[t]{0.45\textwidth}\centering
\begin{tikzpicture}[x=0.136cm,y=0.136cm]
\useasboundingbox (0,-3.375) rectangle (28.375,25);
\fill[vc!12] (12,0) rectangle (25,25);
\fill[hc!12] (0,12) rectangle (25,25);
\filldraw[fill=black!7,draw=black,line width=.35pt] (0,0) rectangle (12,12);
\node[font=\scriptsize,inner sep=0pt] at (6,6) {$0$};
\filldraw[fill=black!7,draw=black,line width=.35pt] (17,0) rectangle (25,8);
\node[font=\scriptsize,inner sep=0pt] at (21,4) {$1$};
\filldraw[fill=black!7,draw=black,line width=.35pt] (0,12) rectangle (7,19);
\node[font=\scriptsize,inner sep=0pt] at (3.5,15.5) {$2$};
\filldraw[fill=black!7,draw=black,line width=.35pt] (11,12) rectangle (17,18);
\node[font=\scriptsize,inner sep=0pt] at (14,15) {$3$};
\filldraw[fill=black!7,draw=black,line width=.35pt] (17,8) rectangle (22,13);
\node[font=\scriptsize,inner sep=0pt] at (19.5,10.5) {$4$};
\filldraw[fill=black!7,draw=black,line width=.35pt] (7,12) rectangle (11,16);
\node[font=\scriptsize,inner sep=0pt] at (9,14) {$5$};
\filldraw[fill=black!7,draw=black,line width=.35pt] (18,13) rectangle (21,16);
\node[font=\scriptsize,inner sep=0pt] at (19.5,14.5) {$6$};
\draw[line width=.7pt] (0,0) rectangle (25,25);
\filldraw[fill=hc!25,draw=hc!70!black,line width=.25pt] (0,-2.875) rectangle (7,-1);
\node[font=\tiny,inner sep=0pt] at (3.5,-1.9375) {$2$};
\filldraw[fill=hc!25,draw=hc!70!black,line width=.25pt] (7,-2.875) rectangle (11,-1);
\node[font=\tiny,inner sep=0pt] at (9,-1.9375) {$5$};
\filldraw[fill=hc!25,draw=hc!70!black,line width=.25pt] (11,-2.875) rectangle (17,-1);
\node[font=\tiny,inner sep=0pt] at (14,-1.9375) {$3$};
\filldraw[fill=hc!25,draw=hc!70!black,line width=.25pt] (17,-2.875) rectangle (22,-1);
\node[font=\tiny,inner sep=0pt] at (19.5,-1.9375) {$4$};
\filldraw[fill=vc!25,draw=vc!70!black,line width=.25pt] (26,0) rectangle (27.875,8);
\node[font=\tiny,inner sep=0pt] at (26.9375,4) {$1$};
\filldraw[fill=vc!25,draw=vc!70!black,line width=.25pt] (26,8) rectangle (27.875,13);
\node[font=\tiny,inner sep=0pt] at (26.9375,10.5) {$4$};
\filldraw[fill=vc!25,draw=vc!70!black,line width=.25pt] (26,13) rectangle (27.875,16);
\node[font=\tiny,inner sep=0pt] at (26.9375,14.5) {$6$};
\end{tikzpicture}\par
\smallskip{\footnotesize (a) $r=4$ right of $u=3$, $L=25$\\ $s=(12,8,7,6,5,4,3)$}
\end{minipage}
\hfill
\begin{minipage}[t]{0.45\textwidth}\centering
\begin{tikzpicture}[x=0.17cm,y=0.17cm]
\useasboundingbox (0,-2.7) rectangle (22.7,20);
\fill[vc!12] (12,0) rectangle (20,20);
\fill[hc!12] (0,12) rectangle (20,20);
\filldraw[fill=black!7,draw=black,line width=.35pt] (0,0) rectangle (12,12);
\node[font=\scriptsize,inner sep=0pt] at (6,6) {$0$};
\filldraw[fill=black!7,draw=black,line width=.35pt] (12,0) rectangle (20,8);
\node[font=\scriptsize,inner sep=0pt] at (16,4) {$1$};
\filldraw[fill=black!7,draw=black,line width=.35pt] (0,12) rectangle (7,19);
\node[font=\scriptsize,inner sep=0pt] at (3.5,15.5) {$2$};
\filldraw[fill=black!7,draw=black,line width=.35pt] (11,13) rectangle (17,19);
\node[font=\scriptsize,inner sep=0pt] at (14,16) {$3$};
\filldraw[fill=black!7,draw=black,line width=.35pt] (13,8) rectangle (18,13);
\node[font=\scriptsize,inner sep=0pt] at (15.5,10.5) {$4$};
\filldraw[fill=black!7,draw=black,line width=.35pt] (7,12) rectangle (11,16);
\node[font=\scriptsize,inner sep=0pt] at (9,14) {$5$};
\filldraw[fill=black!7,draw=black,line width=.35pt] (17,13) rectangle (20,16);
\node[font=\scriptsize,inner sep=0pt] at (18.5,14.5) {$6$};
\draw[line width=.7pt] (0,0) rectangle (20,20);
\filldraw[fill=hc!25,draw=hc!70!black,line width=.25pt] (0,-2.3) rectangle (7,-0.8);
\node[font=\tiny,inner sep=0pt] at (3.5,-1.55) {$2$};
\filldraw[fill=hc!25,draw=hc!70!black,line width=.25pt] (7,-2.3) rectangle (11,-0.8);
\node[font=\tiny,inner sep=0pt] at (9,-1.55) {$5$};
\filldraw[fill=hc!25,draw=hc!70!black,line width=.25pt] (11,-2.3) rectangle (17,-0.8);
\node[font=\tiny,inner sep=0pt] at (14,-1.55) {$3$};
\filldraw[fill=hc!25,draw=hc!70!black,line width=.25pt] (17,-2.3) rectangle (20,-0.8);
\node[font=\tiny,inner sep=0pt] at (18.5,-1.55) {$6$};
\filldraw[fill=vc!25,draw=vc!70!black,line width=.25pt] (20.8,0) rectangle (22.3,8);
\node[font=\tiny,inner sep=0pt] at (21.55,4) {$1$};
\filldraw[fill=vc!25,draw=vc!70!black,line width=.25pt] (20.8,8) rectangle (22.3,13);
\node[font=\tiny,inner sep=0pt] at (21.55,10.5) {$4$};
\filldraw[fill=vc!25,draw=vc!70!black,line width=.25pt] (20.8,13) rectangle (22.3,19);
\node[font=\tiny,inner sep=0pt] at (21.55,16) {$3$};
\end{tikzpicture}\par
\smallskip{\footnotesize (b) $u=3$ above $r=4$, $L=20$\\ $s=(12,8,7,6,5,4,3)$}
\end{minipage}
\caption{Illustrations of the arm geometry used in the proof of
Lemma~\ref{lem:two-cover}, with the arms of Lemma~\ref{lem:full-frame}
sharing $Q_6$; swapping axes interchanges the two orientations. The side
vectors do not satisfy the clearing hypotheses of
Lemma~\ref{lem:full-frame}. The bars record the covering chains:
$\mathcal R$ and $(\mathcal U\setminus\{6\})\cup\{4\}$ in (a), and
$(\mathcal R\setminus\{6\})\cup\{3\}$ and $\mathcal U$ in (b).}
\label{fig:cover8}
\end{figure}
\begin{lemma}[Perpendicular covering chains]\label{lem:two-cover}
In the frame of Lemma~\ref{lem:full-frame}, with $m\ge2$, there are a
vertical chain $\mathcal V$ and a horizontal chain $\mathcal H$ with
$\mathcal V\cup\mathcal H=\{1,\ldots,m\}$ and
$\mathcal V\cap\mathcal H=\{j\}$ for some $j\ne m$. In particular, for $m=6$ some four squares form a chain.
\end{lemma}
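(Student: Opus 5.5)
We work in the frame of Lemma~\ref{lem:full-frame}, with $Q_0=[0,s_0]^2$ and arms $\mathcal R$ and $\mathcal U$ covering $\{1,\ldots,m\}$, and split according to $|\mathcal R\cap\mathcal U|$.

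\emph{Disjoint arms.} If $\mathcal R\cap\mathcal U=\emptyset$, we need a shared index other than $m$. Both arms are nonempty, since $m\ge2$ and each arm is a chain. If $m\in\mathcal R$, choose any $j\in\mathcal U$ and set $\mathcal V=\mathcal R\cup\{j\}$ and $\mathcal H=\mathcal U$. This requires that $\mathcal R\cup\{j\}$ still be a vertical chain. For this we pick $j\in\mathcal U$ with $y_j$ smallest. Every member of $\mathcal U$ has $y_j\ge s_0$, and every member of $\mathcal R$ x-overlaps the others in the arm. The worry is whether a square of $\mathcal R$ lies above $Q_j$ while y-overlapping it. If so, then that square and $Q_j$ are x-disjoint, and we argue directly from the geometry. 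A safer uniform choice, when $\mathcal R\ne\{m\}$, is to take $\mathcal V=\mathcal R$ and $\mathcal H=\mathcal U\cup\{r\}$, where $r\in\mathcal R\setminus\{m\}$ is the topmost square of $\mathcal R$ that is x-disjoint from all of $\mathcal U$. The symmetric choice handles $m\in\mathcal U$. The degenerate case $\mathcal R=\{m\}$, with $\mathcal U=\{1,\ldots,m-1\}$ of size at least $1$, is handled by putting an element $j$ of $\mathcal U$ into $\mathcal V=\{m,j\}$. Two distinct squares are always x-disjoint or y-disjoint. If $Q_m$ and $Q_j$ are y-disjoint, then $\{m,j\}$ is a vertical chain; otherwise they are x-disjoint and we add $m$ to $\mathcal H$ instead, sharing $j$.

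\emph{Shared index.} Suppose $\mathcal R\cap\mathcal U=\{k\}$. If $k\ne m$, we take $\mathcal V=\mathcal R$ and $\mathcal H=\mathcal U$ and are done. If $k=m$, we imitate Figure~\ref{fig:cover8}. Let $r\in\mathcal R\setminus\{m\}$ be the member of the right arm immediately below $Q_m$, and let $u\in\mathcal U\setminus\{m\}$ be the member of the upper arm immediately left of $Q_m$. Since $r$ and $u$ are disjoint, either $Q_r$ is x-disjoint from $Q_u$ or $Q_u$ is y-disjoint from $Q_r$. In the first case, $Q_r$ lies right of $Q_u$, because $Q_r$ x-overlaps $Q_m$, which lies right of $Q_u$. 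We then show that $Q_r$ lies right of every member of $\mathcal U\setminus\{m\}$, using that the upper arm is ordered left to right and $u$ is its rightmost member before $m$. This gives $\mathcal V=\mathcal R$ and $\mathcal H=(\mathcal U\setminus\{m\})\cup\{r\}$, sharing $r\ne m$. In the second case, $Q_u$ lies above $Q_r$, and by the symmetric argument $\mathcal V=(\mathcal R\setminus\{m\})\cup\{u\}$ and $\mathcal H=\mathcal U$, sharing $u$. If one of the arms equals $\{m\}$, the shared element is simply replaced as in the degenerate case above.

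\emph{Main obstacle.} The hardest step is the transitivity claim that $Q_r$, being right of $Q_u$, is right of all earlier members of the upper arm. This uses $x_r+s_r>x_m$, because $Q_r$ and $Q_m$ x-overlap, together with the left-to-right ordering of the upper arm. One must also check that every earlier member of the upper arm ends before $x_u$, so that it ends before $x_r$.

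\emph{Consequence for $m=6$.} Finally, when $m=6$ we have $|\mathcal V|+|\mathcal H|=7$, so one of the two chains has at least four members. This gives the last sentence of the lemma.
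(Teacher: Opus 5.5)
Your shared-index case is correct and is essentially the paper's main step. There $Q_m$ is automatically topmost in $\mathcal R$ and rightmost in $\mathcal U$, since a square of $\mathcal R$ above $Q_m$ would also lie in $\mathcal U$.

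The gap is the disjoint-arms case with $m\in\mathcal R$ and $|\mathcal R|\ge2$. Neither of your two constructions need exist, and ``argue directly from the geometry'' is exactly where the argument is missing. Take $Q_0=[0,10]^2$, $Q_1=[10,18]\times[0,8]$, $Q_2=[4,11]\times[10,17]$, $Q_3=[12,18]\times[8,14]$ in $[0,18]^2$, with $m=3$. Every triple containing $0$ has side sum at least $23$, so this is a genuine instance of the frame with $L=18<\Sigma=19$. Here $\mathcal R=\{1,3\}$ and $\mathcal U=\{2\}$.
\begin{itemize}
\item The set $\mathcal R\cup\{2\}$ is not a vertical chain, because $Q_2$ and $Q_3$ y-overlap.
\item The only member $1$ of $\mathcal R\setminus\{m\}$ x-overlaps $Q_2$, so your ``safer'' $r$ does not exist.
\end{itemize}
The required pair is $\{1,2\}$ and $\{2,3\}$. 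The index $m$ has to leave the vertical chain and join the horizontal one, and you make that move only when $\mathcal R=\{m\}$.

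The missing step, which is how the paper finishes, is this. Let $u$ be rightmost in $\mathcal U$ and $r'$ topmost in $\mathcal R\setminus\{m\}$. Since $x_{r'}\ge s_0>x_u$ and $y_u\ge s_0>y_{r'}$, there are two cases.
\begin{itemize}
\item If $r'$ lies right of $u$, then $\mathcal R$ and $\mathcal U\cup\{r'\}$ work.
\item Otherwise $u$ lies above all of $\mathcal R\setminus\{m\}$. If $Q_m$ lies right of $Q_u$, use $(\mathcal R\setminus\{m\})\cup\{u\}$ and $\mathcal U\cup\{m\}$. If $Q_u$ lies above $Q_m$, use $\mathcal R\cup\{u\}$ and $\mathcal U$.
\end{itemize}
The paper never needs the last subcase. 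It reaches $r=m$ only after choosing $r$ topmost in $\mathcal R\setminus\mathcal U$ and, up to interchanging axes, right of $u$, so $\mathcal U\cup\{m\}$ is automatically a chain.

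Two smaller points need fixing:
\begin{itemize}
\item Your claim that both arms are nonempty is false, since one arm may contain every index. In that case you pair it with a singleton $\{j\}$, $j\ne m$.
\item When $\mathcal R=\{m\}$, your ``otherwise'' branch gives a horizontal chain $\mathcal U\cup\{m\}$ only if $j$ is chosen rightmost in $\mathcal U$.
\end{itemize}
Finally, the transitivity you single out as the main obstacle is immediate from choosing $u$ rightmost. The real work is the disjoint-arms case above.
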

\begin{proof}
If the arms meet at $q\ne m$, they suffice. If either exclusive arm
$\mathcal R\setminus\mathcal U$ or $\mathcal U\setminus\mathcal R$ is empty,
one arm contains every index; pair it with a singleton whose index differs from $m$.

Otherwise let $r$ be topmost in $\mathcal R\setminus\mathcal U$ and $u$
rightmost in $\mathcal U\setminus\mathcal R$. Since $x_r\ge s_0>x_u$ and $y_u\ge s_0>y_r$, either $r$ lies right of $u$ or $u$ lies above $r$. Interchanging the axes
and the roles of $r$ and $u$ if necessary, assume that $r$ lies right of $u$;
Figure~\ref{fig:cover8} illustrates the two orientations.
Then $\mathcal R$ and $(\mathcal U\setminus\mathcal R)\cup\{r\}$
are covering chains with common index $r$. If $r\ne m$, they suffice.

Suppose $r=m$. The arms are disjoint: a common index other than $m$ was
already handled, while $m\in\mathcal R\setminus\mathcal U$.
If $\mathcal R=\{m\}$, then $\mathcal U\cup\{m\}$ is a full horizontal chain;
pair it with a singleton whose index differs from $m$. Otherwise let
$r'$ be topmost in $\mathcal R\setminus\{m\}$. If $r'$ is right of $u$,
use $\mathcal R$ and $\mathcal U\cup\{r'\}$. If not, $u$ lies above $r'$;
use $(\mathcal R\setminus\{m\})\cup\{u\}$ and
$\mathcal U\cup\{m\}$. In either case the common index differs from $m$.
For $m=6$ the two chain cardinalities sum to seven. Hence one is at least four.
\end{proof}

\section{Twenty-three strategies}\label{sec:eight}
Let $\alpha,\beta,\ldots,\psi$ be the twenty-three maxima of side sums in
Table~\ref{tab:pieces}, regarded as functions of $s$. Throughout, $\omega$
ranges over these twenty-three functions, and $\min_\omega$ denotes the minimum
over them. Set $F_8=\max(L_7^*,\min_\omega\omega)$.
The figures and Table~\ref{tab:coords8} use the same Greek letters for the
corresponding strategies.

\begin{figure}[p]
\centering
\noindent \begin{minipage}[t]{0.325\textwidth}\centering

\par\smallskip{\footnotesize $\psi$,\quad $L_8^*=82$\\ $s=(41,40,35,34,12,11,10,9)$}\par
\end{minipage}

\caption{Placements for the twenty-three strategies, $\alpha$ through
$\omicron$ on the previous page and $\pi$ through $\psi$ here, each at a side
vector with $L_8^*=L_7^*+1$ at which its maximum in Table~\ref{tab:pieces} is
the only one of the twenty-three equal to $L_8^*$. Square $Q_i$ carries
label $i$. A teal bar below a panel records the horizontal projections of a
chain with side sum $L_8^*$, and a purple bar at the right records a vertical
chain in the same way. The bars show every side sum in the maximum that
equals $L_8^*$. Panels are not to a common scale.}
\label{fig:strategies}
\end{figure}

\begin{proposition}[The twenty-three constructions]\label{prop:eight-upper}
For $L\ge\ell_\omega(s)=\max(L_6^*(s),\omega(s))$, the coordinates in the
row of $\omega$ in Table~\ref{tab:coords8} give a packing.
Their uniformly valid separation relations define a strategy
$\sigma_\omega$ with cost $\ell_\omega$. Consequently,
\[
L_8^*\le\min_\omega\ell_\omega
=\max\left(L_7^*,\min_\omega\omega\right).
\]
\end{proposition}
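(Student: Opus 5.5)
The proof has three parts: the packing verification, the identification of the cost of $\sigma_\omega$, and the final algebraic identity.

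\textbf{Packing verification.} Fix $\omega$. The coordinates $(x_i,y_i)$ in the row of $\omega$ in Table~\ref{tab:coords8} are explicit expressions in $L$ and $s_0,\ldots,s_7$, each linear (typically $0$, $L-s_i$, or a partial side sum, possibly offset from $L$). I will check the containment conditions~\eqref{eq:contain} and, for each of the $28$ pairs $i<j$, one designated separation inequality from~\eqref{eq:sep}. After substitution, every such inequality becomes either an ordering inequality $s_a\ge s_b$ for $a<b$ (valid on $\ell_2^\downarrow$), or of the form $L\ge\sum_{i\in I}s_i$ for some index set $I$.

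For the second type, I will show that $I$ is contained in a term of $\omega$ or in a term of $L_6^*$ from~\eqref{eq:intro-six}, or is dominated by one after using monotonicity $s_a\ge s_b$. It then follows from $L\ge\ell_\omega(s)$. This is a finite check of linear inequalities, carried out row by row and confirmed by the code~\cite{BadgerCode}.

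\textbf{Strategy and its cost.} The designated separations are valid uniformly in $s$ and $L$. I orient each of them as a relation $i\prec_{\mathrm h}j$ or $i\prec_{\mathrm v}j$ and take transitive closures; the transitive closure is consistent because the relations come from an actual packing. This defines $\sigma_\omega$, and every pair is comparable in at least one order.

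The bound $\ell_{\sigma_\omega}\le\ell_\omega$ needs no separate argument: every chain is realized as pairwise disjoint projections inside $[0,L]$ for every admissible $L$. Taking $L=\ell_\omega(s)$ gives $\ell_{\sigma_\omega}\le\ell_\omega$, by Lemma~\ref{lem:chain} applied to the packing.

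For $\ell_{\sigma_\omega}\ge\ell_\omega$, I will exhibit, for each side sum appearing in $\omega$ and in $L_6^*$, an explicit chain of $\sigma_\omega$ containing those indices. The teal and purple bars of Figure~\ref{fig:strategies} record most of these chains.

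\textbf{The identity.} Since $\max$ distributes over $\min$,
\[
\min_\omega\ell_\omega=\max\bigl(L_6^*,\min_\omega\omega\bigr).
\]
It remains to show $\max(L_6^*,\min_\omega\omega)=\max(L_7^*,\min_\omega\omega)$. By~\eqref{eq:intro-seven}, $L_7^*=\max(L_6^*,\min\{A,B,C\})$, where $A$, $B$, $C$ are the three seven-square costs. So it suffices to prove $\min_\omega\omega\ge\min\{A,B,C\}$, and I will show that each $\omega$ individually dominates one of $A$, $B$, $C$, term by term. For example, $\psi=s_1+s_4+s_5+s_6+s_7\ge s_1+s_4+s_5+s_6=C$, and $\pi\ge\max(s_0+s_5+s_6,\ s_1+s_3+s_6)=B$. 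Where termwise domination fails for a particular $\omega$, I will instead check $\omega\ge\min\{A,B,C\}$ directly, or $\max(L_6^*,\omega)\ge L_7^*$ using terms of $L_6^*$. Granting this, $L_8^*\le\min_\omega\ell_\omega$ holds because each $\sigma_\omega$ yields a packing of side $\ell_\omega(s)$.

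\textbf{Main obstacle.} The hard part is the bookkeeping in the first step. For each of the $23$ rows, I must confirm that every forced side sum is dominated by a term of $\max(L_6^*,\omega)$ and that no stray longer chain appears. I would organize this per row as a list of horizontal and vertical maximal chains and compare it against the terms, relying on the computational verification for completeness.
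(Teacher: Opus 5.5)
Your verification step follows the paper's proof, but the identity step fails as you plan it. Termwise domination of one of $A,B,C$ holds for twenty of the maxima but fails for $\xi$, $\sigma$, $\upsilon$. Your first fallback $\omega\ge\min\{A,B,C\}$ fails for these three as well. At $s=(100,10,5,4,4,3,1,1)$ one has $\xi=17$, $\sigma=16$, $\upsilon=15$, while $A=C=18$ and $B=104$. For these three only your last fallback, $\max(L_6^*,\omega)\ge L_7^*$, survives, and it needs no side-sum comparison. The packing you construct gives $\ell_\omega\ge L_8^*$, and deleting $Q_7$ gives $L_8^*\ge L_7^*$. Hence $\max(L_6^*,\min_\omega\omega)=\min_\omega\ell_\omega\ge L_7^*\ge L_6^*$, so replacing $L_6^*$ by $L_7^*$ in that maximum changes nothing. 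This is how the paper concludes.

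The same observation repairs your lower bound $\ell_{\sigma_\omega}\ge\ell_\omega$. For the terms of $L_6^*$, a chain of $\sigma_\omega$ containing those indices need not exist. In the row of $\zeta$, $Q_2,Q_4$ both reach the right wall and $Q_3,Q_4$ both reach the top wall, so $\{2,3,4\}$ is not a chain of $\sigma_\zeta$; only the dominating vertical chain $\{1,2,4\}$ is. The paper exhibits chains only for the terms of $\omega$ and obtains $\ell_{\sigma_\omega}\ge L_8^*\ge L_6^*$ from Lemma~\ref{lem:strategy}.

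The statement also defines $\sigma_\omega$ by \emph{all} uniformly valid separations, not by the transitive closure of one designated separation per pair. Your argument for $\ell_{\sigma_\omega}\le\ell_\omega$ applies verbatim to that larger relation. Using it makes the chains realizing the terms of $\omega$ independent of your choice of designations.
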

\begin{proof}
Fix $\omega$ and put $g(s)=\max(L_6^*(s),\omega(s))$.
Substitution in~\eqref{eq:contain} and~\eqref{eq:sep} verifies containment
and one fixed separation for each pair, uniformly for every
$s\in\ell_2^\downarrow$ and $L\ge g(s)$. Each check has the form
$aL+\sum_{i=0}^7c_i s_i\ge0$ with $a\ge0$. Every linear form in
$s_0,\ldots,s_7$ satisfies
\begin{equation}\label{eq:prefix-test}
\sum_{i=0}^7c_i s_i
=\sum_{k=0}^7\left(\sum_{i=0}^k c_i\right)(s_k-s_{k+1}),
\end{equation}
where $s_8=0$ for the truncated tuple. Hence a linear form with nonnegative
coefficient prefix sums is nonnegative. For each check, choose a side sum $S$
occurring in $g$ for which the linear form obtained by setting $L=S$ has
nonnegative coefficient prefix sums; the choice is made separately for each
containment or separation inequality. Since $L\ge g(s)\ge S$ and $a\ge0$,
this proves the required inequality. For example, in the row of $\psi$,
$Q_7$ lies above $Q_6$ because
$L-s_7\ge s_1+s_4+s_5+s_6$, and $Q_3$ lies left of $Q_4$ because
$s_2+s_3\le L-s_4$. These follow from $L\ge\psi(s)$ and
$L\ge s_2+s_3+s_4$, respectively.

The horizontal and vertical relations consisting of all uniformly valid
separations define a strategy $\sigma_\omega$ with
$\ell_{\sigma_\omega}\le g$ by~\cite[Lemma~3.3]{BadgerSeven}.
Each side sum in $\omega$ is the side sum of a chain of $\sigma_\omega$,
so $\ell_{\sigma_\omega}\ge\omega$. Lemma~\ref{lem:strategy} also gives
$\ell_{\sigma_\omega}\ge L_8^*\ge L_6^*$. Hence $\ell_{\sigma_\omega}=g$.
The code in~\cite{BadgerCode} verifies all the containment, separation,
and chain checks.

The constructions prove
\[
L_8^*\le\min_\omega\max(L_6^*,\omega)=\max\Bigl(L_6^*,\min_\omega\omega\Bigr).
\]
Deletion gives $L_6^*\le L_7^*\le L_8^*$. Since the common value of the two
expressions on the right is at least $L_8^*\ge L_7^*$, replacing $L_6^*$ by
$L_7^*$ in the last maximum does not change it. This gives the remaining
identities without using the eight-square lower bound.
\end{proof}

Figure~\ref{fig:strategies} shows each construction at a side
vector with $L_8^*>L_7^*$ at which it alone attains $L_8^*$. The values of
$L_8^*$ there follow from Theorem~\ref{thm:main}.

Deleting $Q_7$ from a strategy $\sigma$ for eight squares gives a strategy
$\sigma^-$ for the seven largest squares. The chains of $\sigma^-$ are the
chains of $\sigma$ that do not contain $7$, and removing $7$ from a chain of
$\sigma$ leaves a chain of $\sigma^-$ or the empty set. Hence
$\ell_{\sigma^-}(s_0,\ldots,s_6)=\ell_\sigma(s_0,\ldots,s_6,0)$, and
Proposition~\ref{prop:eight-upper} gives
\begin{equation}\label{eq:restrict}
\ell_{\sigma_\omega^-}(s)=\max\bigl(L_6^*(s),\ \omega(s_0,\ldots,s_6,0)\bigr).
\end{equation}
Comparing~\eqref{eq:restrict} with the three seven-square costs
in~\eqref{eq:intro-seven} shows that $\ell_{\sigma_\omega^-}$ is the cost of
$\st{center}$ for $\omega=\beta,\theta,\pi,\rho,\chi$, the cost of
$\st{top}$ for $\omega=\omicron,\varphi$, and the cost of $\st{right}$ for
$\omega=\nu,\tau,\psi$. For each of the other
thirteen strategies, $\ell_{\sigma_\omega^-}$ differs from all three
seven-square costs, and at the side vector illustrating $\sigma_\omega$ in
Figure~\ref{fig:strategies} it equals $L_7^*+1=L_8^*$. At the side
vector illustrating $\sigma_\tau$, its restriction also has cost
$L_7^*+1=L_8^*$, although its cost function is that of $\st{right}$. Thus,
fourteen of the twenty-three strategies have restrictions that cost more
than $L_7^*$ at the side vectors illustrating them.

\begingroup
\footnotesize
\renewcommand{\arraystretch}{1.0}

\begin{longtable}{c@{\qquad}llll}
\caption{Coordinates of the lower left corners of $Q_0,\dots,Q_7$ for the twenty-three strategies, as functions of the tuple $s$ and the side $L$.}\label{tab:coords8}\\
\toprule
 & $(x_0,y_0)$ & $(x_1,y_1)$ & $(x_2,y_2)$ & $(x_3,y_3)$\\
strategy & $(x_4,y_4)$ & $(x_5,y_5)$ & $(x_6,y_6)$ & $(x_7,y_7)$\\
\midrule
\endfirsthead
\multicolumn{5}{c}{\tablename~\thetable\ (continued)}\\[2pt]
\toprule
 & $(x_0,y_0)$ & $(x_1,y_1)$ & $(x_2,y_2)$ & $(x_3,y_3)$\\
strategy & $(x_4,y_4)$ & $(x_5,y_5)$ & $(x_6,y_6)$ & $(x_7,y_7)$\\
\midrule
\endhead
\bottomrule
\endlastfoot
$\alpha$ & $(0,\,0)$ & $(L-s_1,\,s_5)$ & $(0,\,L-s_2)$ & $(L-s_3,\,L-s_3)$ \\*
 & $(s_2,\,L-s_4)$ & $(L-s_5,\,0)$ & $(s_0,\,0)$ & $(s_2,\,s_0)$ \\[1.2pt]
$\beta$ & $(0,\,0)$ & $(L-s_1,\,0)$ & $(0,\,L-s_2)$ & $(s_2,\,L-s_3)$ \\*
 & $(L-s_4,\,L-s_4)$ & $(L-s_5,\,s_1)$ & $(L-s_5-s_6,\,s_1)$ & $(L-s_5-s_6-s_7,\,s_0)$ \\[1.2pt]
$\gamma$ & $(0,\,0)$ & $(L-s_1,\,0)$ & $(s_3,\,L-s_2)$ & $(0,\,L-s_3)$ \\*
 & $(L-s_4,\,L-s_4)$ & $(L-s_5,\,s_1)$ & $(L-s_5-s_6,\,s_1)$ & $(0,\,s_0)$ \\[1.2pt]
$\delta$ & $(0,\,0)$ & $(L-s_1,\,0)$ & $(s_4,\,L-s_2)$ & $(L-s_3,\,L-s_3)$ \\*
 & $(0,\,L-s_4)$ & $(0,\,s_0)$ & $(L-s_6,\,s_1)$ & $(L-s_6-s_7,\,s_1)$ \\[1.2pt]
$\varepsilon$ & $(0,\,0)$ & $(L-s_1,\,0)$ & $(0,\,L-s_2)$ & $(L-s_3,\,s_1)$ \\*
 & $(L-s_4,\,L-s_4)$ & $(s_2,\,s_0)$ & $(s_2+s_7,\,L-s_6)$ & $(s_2,\,L-s_7)$ \\[1.2pt]
$\zeta$ & $(0,\,0)$ & $(L-s_1,\,0)$ & $(L-s_2,\,s_1)$ & $(s_6,\,L-s_3)$ \\*
 & $(L-s_4,\,L-s_4)$ & $(s_3+s_6,\,L-s_5)$ & $(0,\,L-s_6)$ & $(0,\,s_0)$ \\[1.2pt]
$\eta$ & $(0,\,0)$ & $(L-s_1,\,0)$ & $(s_3,\,L-s_2)$ & $(0,\,L-s_3)$ \\*
 & $(L-s_4,\,L-s_4)$ & $(L-s_5,\,s_1)$ & $(0,\,s_0)$ & $(L-s_5-s_7,\,s_1)$ \\[1.2pt]
$\theta$ & $(0,\,0)$ & $(L-s_1,\,0)$ & $(0,\,L-s_2)$ & $(s_2,\,L-s_3)$ \\*
 & $(L-s_4,\,L-s_4)$ & $(L-s_5,\,s_1)$ & $(L-s_5-s_6,\,s_1)$ & $(0,\,s_0)$ \\[1.2pt]
$\iota$ & $(0,\,0)$ & $(L-s_1,\,0)$ & $(0,\,L-s_2)$ & $(L-s_3,\,L-s_3)$ \\*
 & $(s_2,\,L-s_4)$ & $(L-s_5,\,s_1)$ & $(L-s_5-s_6,\,s_1)$ & $(s_2,\,s_0)$ \\[1.2pt]
$\kappa$ & $(0,\,0)$ & $(L-s_1,\,0)$ & $(L-s_2,\,s_1)$ & $(0,\,L-s_3)$ \\*
 & $(L-s_4,\,L-s_4)$ & $(s_3,\,s_0)$ & $(s_3,\,L-s_6)$ & $(s_3+s_6,\,L-s_7)$ \\[1.2pt]
$\lambda$ & $(0,\,0)$ & $(L-s_1,\,0)$ & $(0,\,L-s_2)$ & $(L-s_3,\,L-s_3)$ \\*
 & $(s_2,\,L-s_4)$ & $(s_2,\,s_0)$ & $(L-s_6,\,s_1)$ & $(L-s_6-s_7,\,s_1)$ \\[1.2pt]
$\mu$ & $(0,\,0)$ & $(L-s_1,\,0)$ & $(0,\,L-s_2)$ & $(s_2,\,L-s_3)$ \\*
 & $(L-s_4,\,L-s_4)$ & $(L-s_5-s_6,\,s_1)$ & $(L-s_6,\,s_1)$ & $(L-s_7,\,s_1+s_6)$ \\[1.2pt]
$\nu$ & $(0,\,0)$ & $(L-s_1,\,0)$ & $(0,\,L-s_2)$ & $(s_2,\,L-s_3)$ \\*
 & $(L-s_4,\,L-s_4)$ & $(L-s_5,\,s_1+s_6)$ & $(L-s_6,\,s_1)$ & $(L-s_6-s_7,\,s_1)$ \\[1.2pt]
$\xi$ & $(0,\,0)$ & $(L-s_1,\,0)$ & $(0,\,L-s_2)$ & $(L-s_3,\,s_1)$ \\*
 & $(s_2,\,L-s_4)$ & $(s_2+s_4,\,L-s_5)$ & $(L-s_6,\,L-s_6)$ & $(L-s_7,\,s_1+s_3)$ \\[1.2pt]
$\omicron$ & $(0,\,0)$ & $(L-s_1,\,0)$ & $(L-s_2,\,s_1)$ & $(0,\,L-s_3)$ \\*
 & $(s_3,\,L-s_4)$ & $(L-s_5,\,L-s_5)$ & $(s_3+s_4,\,L-s_6)$ & $(s_3,\,L-s_4-s_7)$ \\[1.2pt]
$\pi$ & $(0,\,0)$ & $(L-s_1,\,0)$ & $(0,\,L-s_2)$ & $(s_2,\,L-s_3)$ \\*
 & $(L-s_4,\,L-s_4)$ & $(L-s_5,\,s_1)$ & $(L-s_5-s_6,\,s_1)$ & $(L-s_5-s_7,\,s_1+s_6)$ \\[1.2pt]
$\rho$ & $(0,\,0)$ & $(L-s_1,\,0)$ & $(0,\,L-s_2)$ & $(s_2,\,L-s_3)$ \\*
 & $(L-s_4,\,L-s_4)$ & $(L-s_5,\,s_1)$ & $(L-s_5-s_6,\,s_1)$ & $(L-s_7,\,s_1+s_5)$ \\[1.2pt]
$\sigma$ & $(0,\,0)$ & $(L-s_1,\,0)$ & $(0,\,L-s_2)$ & $(s_2,\,L-s_3)$ \\*
 & $(L-s_4,\,s_1)$ & $(s_2+s_3,\,L-s_5)$ & $(L-s_6,\,L-s_6)$ & $(L-s_7,\,s_1+s_4)$ \\[1.2pt]
$\tau$ & $(0,\,0)$ & $(L-s_1,\,0)$ & $(0,\,L-s_2)$ & $(s_2,\,L-s_3)$ \\*
 & $(L-s_4,\,s_1)$ & $(L-s_5,\,s_1+s_4)$ & $(L-s_6,\,L-s_6)$ & $(s_2+s_3,\,L-s_7)$ \\[1.2pt]
$\upsilon$ & $(0,\,0)$ & $(L-s_1,\,0)$ & $(0,\,L-s_2)$ & $(s_2,\,L-s_3)$ \\*
 & $(s_2+s_3,\,L-s_4)$ & $(L-s_5,\,s_1)$ & $(L-s_6,\,L-s_6)$ & $(L-s_7,\,s_1+s_5)$ \\[1.2pt]
$\varphi$ & $(0,\,0)$ & $(L-s_1,\,0)$ & $(L-s_2,\,s_1)$ & $(0,\,L-s_3)$ \\*
 & $(s_3,\,L-s_4)$ & $(s_3+s_4,\,L-s_5)$ & $(L-s_6,\,L-s_6)$ & $(L-s_7,\,s_1+s_2)$ \\[1.2pt]
$\chi$ & $(0,\,0)$ & $(L-s_1,\,0)$ & $(0,\,L-s_2)$ & $(s_2,\,L-s_3)$ \\*
 & $(L-s_4,\,L-s_4)$ & $(L-s_5,\,s_1)$ & $(L-s_5-s_6,\,s_1)$ & $(L-s_5-s_6-s_7,\,s_1)$ \\[1.2pt]
$\psi$ & $(0,\,0)$ & $(L-s_1,\,0)$ & $(0,\,L-s_2)$ & $(s_2,\,L-s_3)$ \\*
 & $(L-s_4,\,s_1)$ & $(L-s_5,\,s_1+s_4)$ & $(L-s_6,\,s_1+s_4+s_5)$ & $(L-s_7,\,L-s_7)$ \\[1.2pt]
\end{longtable}
\endgroup

\section{Lower bounds}\label{sec:lower}
We prove eight lower bounds $L_8^*(s)\ge b_i(s)$, $i=1,\ldots,8$, each
given by side sums of $s_0,\ldots,s_7$, in
Propositions~\ref{prop:ell1}--\ref{prop:ell9}. Section~\ref{sec:combine}
combines them with $L_7^*$ and the column bound of
Lemma~\ref{lem:column}. Table~\ref{tab:names}
records the notation for recurring side sums.

The bounds $b_2,\ldots,b_8$ are minima of side sums. In their proofs, we
set $\Sigma=b_i$ and suppose that a packing has side
$L<\Sigma$. A family of squares clears $\Sigma$ as soon as its side sum is at
least one of the side sums in $b_i$. Since $s$ is nonincreasing, replacing
indices in a family by larger ones does not increase its side sum. These
proofs follow a common pattern. In the
cornering arguments, some of the resulting exclusions allow $Q_0$ to be moved to a corner and restrict the positions of the next
largest squares. Further exclusions and comparisons of side sums then
rule out the remaining placements, ending with a chain that clears
$\Sigma$. The proof of $b_3$ needs no cornering; it begins with the
weighted-chain comparison of Lemma~\ref{lem:weighted}. The bound $b_1$
instead combines two perpendicular covering chains with a comparison of
partitions.

\begingroup
\small
\renewcommand{\arraystretch}{1.12}
\begin{longtable}{@{}l@{\qquad}l@{\qquad}l@{}}
\caption{Recurring side sums.}\label{tab:names}\\
\toprule
\endfirsthead
\multicolumn{3}{c}{\tablename~\thetable\ (continued)}\\[2pt]
\toprule
\endhead
\bottomrule
\endlastfoot
$W=s_3+s_4+s_5+s_6$ & $G_6=s_0+s_3+s_6$ & $J_7=s_1+s_2+s_7$\\
$H=s_1+s_2+s_5$ & $G_7=s_0+s_3+s_7$ & $J_{67}=s_1+s_2+s_6+s_7$\\
$C_0=s_0+s_5+s_6$ & $K=s_0+s_2+s_7$ & $M=s_1+s_4+s_6+s_7$\\
$C_3=s_1+s_3+s_6$ & $T=s_0+s_6+s_7$ & $\Ppi=s_1+s_3+s_6+s_7$\\
$R=s_1+s_4+s_5+s_6$ & $U=s_0+s_4+s_5$ & $\Prho=s_1+s_4+s_5+s_7$\\
$D=s_1+s_3+s_7$ & $H_4=s_1+s_3+s_4$ & $\Pchi=s_0+s_5+s_6+s_7$\\
$E=s_0+s_4+s_7$ & $H_5=s_1+s_3+s_5$ & $V=s_2+s_5+s_6+s_7$\\
$P=s_1+s_4+s_5+s_6+s_7$ & $J_4=s_1+s_2+s_4$ & $X=s_3+s_4+s_6+s_7$\\
 & $J_6=s_1+s_2+s_6$ & $Y=s_2+s_4+s_6+s_7$\\
\end{longtable}
\endgroup

\needspace{11\baselineskip}
\subsection{The balanced-partition bound}
\begin{proposition}\label{prop:ell1}
For every $s\in\ell_2^\downarrow$,
\[
L_8^*(s)\ge b_1(s):=\min\bigl(T,\ \mathcal K(s)\bigr),
\]
where $T=s_0+s_6+s_7$ and
\[
\mathcal K(s)=\min_{\substack{I\sqcup J=\{1,2,3,4,5,7\}\\|I|=|J|=3}}
\max\left(s_6+\sum_{i\in I}s_i,\ s_6+\sum_{j\in J}s_j\right).
\]
\end{proposition}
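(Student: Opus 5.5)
Set $\Sigma=b_1(s)$ and suppose, for contradiction, that a packing of $Q_0,\ldots,Q_7$ with positive sides has side $L<\Sigma$. By continuity this case suffices. The paper's hint that $b_1$ "combines two perpendicular covering chains with a comparison of partitions" fixes the plan. First put $Q_0$ in a corner using Lemma~\ref{lem:full-frame}. Then apply Lemma~\ref{lem:two-cover} to cover $\{1,\ldots,7\}$ by a vertical chain and a horizontal chain meeting in one index. Finally compare the side sums of these chains with the partitions in $\mathcal K$.

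\emph{Step 1 (cornering).} I take $m=7$ and $N=7$ in Lemma~\ref{lem:full-frame}. Its hypothesis needs every triple $\{0,i,j\}$ with $1\le i<j\le7$ to clear $\Sigma$. Since $s$ is nonincreasing, the smallest such triple is $\{0,6,7\}$, with side sum $T\ge b_1=\Sigma$. So every such triple clears $\Sigma$. After reflection, $Q_0=[0,s_0]^2$, and the arms $\mathcal R,\mathcal U$ cover $\{1,\ldots,7\}$ with $|\mathcal R\cap\mathcal U|\le1$.

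\emph{Step 2 (covering chains).} Lemma~\ref{lem:two-cover} with $m=7$ gives a vertical chain $\mathcal V$ and a horizontal chain $\mathcal H$ with $\mathcal V\cup\mathcal H=\{1,\ldots,7\}$ and $\mathcal V\cap\mathcal H=\{j\}$ for some $j\ne7$. By Lemma~\ref{lem:chain}, both side sums are at most $L<\Sigma\le\mathcal K(s)$. The cardinalities satisfy $|\mathcal V|+|\mathcal H|=8$.

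\emph{Step 3 (partition comparison).} The hard step is showing that some pair $(\mathcal V,\mathcal H)$ of this form has $\max(\sum_{\mathcal V},\sum_{\mathcal H})\ge\mathcal K(s)$.

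In the balanced case $|\mathcal V|=|\mathcal H|=4$, remove $j$ from both chains. This leaves a partition of the six indices $\{1,\ldots,7\}\setminus\{j\}$ into two triples $I',J'$. Each chain then has side sum $s_j+\sum_{I'}$ or $s_j+\sum_{J'}$.
\begin{itemize}
\item If $j=6$, this is exactly a term in $\mathcal K$, and we are done.
\item If $j\in\{1,\ldots,5\}$, then $6\in I'$, say. Swap $j$ and $6$: the sets $I=(I'\setminus\{6\})\cup\{j\}$ and $J=J'$ partition $\{1,2,3,4,5,7\}$. We have $s_j+\sum_{I'}=s_6+\sum_I$ exactly, and $s_j+\sum_{J'}\ge s_6+\sum_{J}$ since $s_j\ge s_6$. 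So $\max(\sum_{\mathcal V},\sum_{\mathcal H})\ge\mathcal K(s)$, a contradiction.
\end{itemize}

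In the unbalanced case one chain has at least five elements. This is the main obstacle, and there are two options.
\begin{itemize}
\item \emph{Option 1.} Retain more structure from the proof of Lemma~\ref{lem:two-cover}: the arms are chains with pairwise overlaps, and the arm assignments are flexible. Use this to rebalance, moving a square from the long chain to the short one, or re-choosing $r,u$, so the cardinalities become $4,4$.
\item \emph{Option 2.} Argue directly that a five-element chain containing $\{1,\ldots,7\}$ minus at most two indices already clears $\Sigma$. For example, a chain $\{1,4,5,6,7\}$ and the like should dominate $s_6+$ some triple plus a balancing term. Alternatively, a long arm forces one of $Q_1,Q_2,Q_3$ beside $Q_0$, producing a chain $\{0,a,b\}\supseteq$ a triple clearing $T$.
\end{itemize}

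I would first try Option 2 with the cornered $Q_0$: any square of $\mathcal R$ together with $Q_0$ and a square of the other arm tends to form a chain with side sum at least $T$. This reduces the unbalanced configurations to few cases. Those cases I would check by the prefix-sum comparison \eqref{eq:prefix-test} against the partitions in $\mathcal K$.
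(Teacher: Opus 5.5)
Your cornering step, the application of Lemma~\ref{lem:two-cover}, and the balanced case (including the swap of $j$ with $6$) are correct and agree with the paper. \textbf{The gap is the unbalanced case, which you leave open, and neither option closes it as sketched.}

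Option~1 cannot be the mechanism. If $Q_1,\ldots,Q_7$ all lie in the right arm, they pairwise x-overlap, so every horizontal chain among them is a singleton. Then every perpendicular cover has sizes $7$ and $1$, and excluding such a configuration is itself a side-sum statement.

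A comparison using only $s_0\ge\cdots\ge s_7$, which is what your prefix-sum check in Option~2 amounts to, also cannot work. Take the chains $\{1,6\}$ and $\{2,3,4,5,6,7\}$ with $s_1=100$ and $s_2=\cdots=s_7=1$. The larger side sum is $101$, while $\mathcal K(s)=103$. So an inequality beyond monotonicity must enter, and your proposal never supplies one.

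\textbf{The missing input is the two-square chain $Q_0,Q_1$.} Lemma~\ref{lem:chain} gives $s_0+s_1\le L<\Sigma\le T=s_0+s_6+s_7$. Hence $s_1<s_6+s_7\le s_5+s_7$, so among $s_1,s_2,s_3,s_4,s_5,s_7$ every pair sums to more than every single entry. With this, the unbalanced case is pure arithmetic.

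Perform your swap for any cardinalities: the family containing both $j$ and $6$ keeps its sum, and the other loses $s_j-s_6\ge0$. Then delete $6$. This gives a partition $I\sqcup J=\{1,2,3,4,5,7\}$ of arbitrary shape with $s_6+\sum_I s_i\le L$ and $s_6+\sum_J s_j\le L$. Now rebalance:
\begin{itemize}
\item If $I=\{a,b\}$, split $J$ into two pairs with sums $u$ and $v$. Attach $a$ to the pair with sum $u$ and $b$ to the pair with sum $v$. Since $s_a\le v$ and $s_b\le u$, the new part-sums $s_a+u$ and $s_b+v$ are at most $u+v=\sum_J s_j$.
\item If $I=\{a\}$, attach $a$ to two entries of $J$. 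The other three entries of $J$ sum to more than $s_a$, so neither new triple exceeds $\sum_J s_j$.
\item If $I=\emptyset$, any split into two triples works.
\end{itemize}
In every case some $3+3$ partition gives $\mathcal K(s)\le L<\Sigma\le\mathcal K(s)$, which is the desired contradiction. Your intuition in Option~2 that a long chain clears $\Sigma$ by itself is correct, but only because of $s_1<s_6+s_7$.
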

\begin{proof}
Suppose $L<\Sigma:=b_1$.
A triple containing $Q_0$ has side sum at least $T$, because its other two
squares are no smaller than $Q_6$ and $Q_7$. Thus, every such triple clears
$\Sigma$. Apply Lemmas~\ref{lem:full-frame} and
\ref{lem:two-cover} with $m=7$. They give perpendicular covering chains
with common index $j\le6$.

If $j\ne6$, interchange the labels $j$ and $6$ in the two index
sets. The family containing both has unchanged side sum, while the
other sum decreases by $s_j-s_6\ge0$. This is a statement about sums only;
the relabelled families need not remain geometric chains. Removing the
new common index $6$ gives a partition
\[
I\sqcup J=\{1,2,3,4,5,7\},\qquad
s_6+\sum_{i\in I}s_i\le L,\quad s_6+\sum_{i\in J}s_i\le L.
\]
Because $s_0+s_1\le L<T$ by Lemma~\ref{lem:chain}, we have $s_1<s_6+s_7\le s_5+s_7$.
Among the six numbers indexed by $1,2,3,4,5,7$, every pair therefore sums
to more than every singleton.

Under this pair--singleton property, any two-part partition can be
replaced by a $3+3$ partition without increasing its maximum part-sum.
For a $2+4$ partition, write the first two entries as $a,b$ and split the
other four into pairs of sums $u,v$. Then $a+u,b+v\le u+v$. For a $1+5$
partition, join the singleton to two entries of the other part. Since the three
remaining entries sum to at least the singleton, neither new triple
exceeds the old five-term sum. The $0+6$ case is immediate.
The resulting $3+3$ partition gives $\KK\le L$, contrary to $L<\Sigma$.
\end{proof}

\subsection{The bridge bound}\label{sec:bridge}
\begin{proposition}\label{prop:ell3}
For every $s\in\ell_2^\downarrow$,
\[
L_8^*(s)\ge b_2(s):=\min(C_0,\ E,\ J_4,\ J_{67}),
\]
where
\[
C_0=s_0+s_5+s_6,\quad E=s_0+s_4+s_7,\quad J_4=s_1+s_2+s_4,\quad
J_{67}=s_1+s_2+s_6+s_7.
\]
\end{proposition}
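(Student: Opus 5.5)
Suppose a packing of $Q_0,\ldots,Q_7$ has side $L<\Sigma:=b_2$. We aim to derive a contradiction by the cornering pattern described at the start of this section: fix $Q_0$ in a corner, determine where $Q_1,Q_2,Q_3$ must sit, and then show that $Q_4,\ldots,Q_7$ cannot be placed without producing a chain whose side sum clears $\Sigma$.

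\textbf{Step 1: clearing facts.} First we record which families clear $\Sigma$, using only that $s$ is nonincreasing.
\begin{itemize}[label={}]
\item Every triple $\{0,a,b\}$ with $a<b$, $a\le 5$, $b\le 6$ has side sum at least $C_0$.
\item Every triple $\{0,a,7\}$ with $a\le4$ has side sum at least $E$.
\item Every triple among $\{0,1,2,3,4\}$ has side sum at least $J_4$.
\item The triple $\{1,2,3\}$, and every triple from $\{1,2,3,4\}$, clears $J_4$.
\item Every quadruple from $\{1,\ldots,7\}$ containing two of $\{1,2\}$ clears $J_{67}$.
\end{itemize}
The triples $\{0,5,7\}$ and $\{0,6,7\}$ need not clear $\Sigma$. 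This is why $Q_5,Q_6,Q_7$ are the delicate squares.

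\textbf{Step 2: corner $Q_0$ and fix the large squares.} The hypotheses of Lemma~\ref{lem:pane} hold: every triple among $0,1,2,3$ clears, and every triple $\{0,b,j\}$ with $b\le3$ clears, since $b\le3$ and $j\le7$ gives at least $E$ or $C_0$. So $Q_0$ can be translated to a corner, and after reflection $Q_0=[0,s_0]^2$. The hypotheses of Lemma~\ref{lem:fixed-core} with $m=4,5$ also hold: $\{0,1,m\}$ and $\{0,b,m\}$ clear $C_0$, and $\{1,b,m\}$ clears $J_4$ for $m=4$, and for $m=5$ clears $J_{67}$ or $C_0$. I expect this last check to need care. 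Lemma~\ref{lem:fixed-core} then gives, after interchanging axes:
\begin{itemize}[label={}]
\item $Q_1$ lies right of $Q_0$, with $y_1<s_0$;
\item $p,c\in\{2,3\}$, with $p$ above $Q_0$, $c$ above $Q_1$, and $p$ left of $c$;
\item $Q_4$ and $Q_5$ are x-disjoint from $p$ and $c$, and y-overlap both, so they lie in the top band between or beside $p$ and $c$.
\end{itemize}

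\textbf{Step 3: place $Q_6$ and $Q_7$ by pins.} Next we pin $Q_6$ and $Q_7$ using the clearing sets.
\begin{itemize}[label={}]
\item The vertical pair $0,p$ with $6$ gives $\{0,p,6\}$, which clears $C_0$, so $Q_6$ must y-overlap $Q_0$ or $p$.
\item Similarly $\{0,p,7\}$ clears $E$, so $Q_7$ y-overlaps $Q_0$ or $p$.
\item The horizontal chain $\{0,1\}$ together with $6$ or $7$ clears $C_0$ or $E$, so each of $Q_6,Q_7$ x-overlaps $Q_0$ or $Q_1$.
\end{itemize}
Combining these, each of $Q_6,Q_7$ lies either in the gap region above $Q_1$ and below $c$, or in the top band.

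The goal is then to exhibit a horizontal chain containing $1$ or $c$, $2$ or $3$, and two small squares, which clears $J_{67}$. Alternatively, we want a vertical chain $\{1,c,\cdot,\cdot\}$, for instance $1$, a small square, $c$, and another small square, which again clears $J_{67}$, or a triple containing $0$ that clears $C_0$ or $E$.

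\textbf{Step 4: case split (main obstacle).} We split according to whether $Q_4$ and $Q_5$ lie between $p$ and $c$ or outside them.
\begin{itemize}[label={}]
\item If $Q_4$ lies between $p$ and $c$, then $p,4,c$ is a horizontal chain with side sum at least $s_2+s_3+s_4$. This is not yet enough, so we add a small square pinned into the band, or use $Q_1$ below $c$ together with $Q_6$ and $Q_7$ in the gap, giving $\{1,6,7,c\}$ vertically. Since $s_c\ge s_3$, this falls short of $J_{67}$ unless $c=2$.
\item The worst case is therefore $c=3$ and $p=2$. There I expect to use $\{2,4,\ldots\}$ horizontally, or to translate $Q_1$ to the bottom-right corner, as the remark after Lemma~\ref{lem:fixed-core} permits. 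This would force $Q_6$ and $Q_7$ to form, with $Q_2$ and either $Q_1$ or $Q_4$, a chain clearing $J_{67}$ or $J_4$.
\end{itemize}
Making this final exhaustion airtight is where I expect the real difficulty to lie.
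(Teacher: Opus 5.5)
There is a genuine gap, beginning in Step~2. Lemma~\ref{lem:fixed-core} needs $\{1,b,m\}$ to clear $\Sigma$ for $b=2,3$ and $m=4,5$. That is, $H_4=s_1+s_3+s_4$, $H=s_1+s_2+s_5$, and $H_5=s_1+s_3+s_5$ must all exceed $L$, and nothing in $b_2$ controls them. The triple $\{1,3,4\}$ reaches $J_4$ only when $s_3=s_2$. The triples $\{1,2,5\}$ and $\{1,3,5\}$ reach neither $J_{67}$ nor $C_0$ in general. For example, at $s=(30,20,19,15,14,11,10,9)$ one has $b_2=C_0=51$ and $L_7^*=50$, yet $H_4=49$, $H=50$, $H_5=46$. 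So for a hypothetical packing with $50\le L<51$ you cannot exclude these triples as chains, and the relations among $p,c,4,5$ that you import are unavailable. This is why $H_5$ is a term of $b_4$ and $b_5$, whose proofs do use Lemma~\ref{lem:fixed-core}.

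Step~1 has related errors. The triple $\{2,3,4\}$ never clears, since $L\ge L_6^*\ge s_2+s_3+s_4$, and among triples from $\{1,2,3,4\}$ only $\{1,2,3\}$ and $\{1,2,4\}$ clear $J_4$. Steps~3--4 derive no contradiction at all. The localization of $Q_6,Q_7$ is asserted rather than derived from the pins. Several of your target chains, such as $\{1,3,6,7\}$ or $\{2,3,e,f\}$ with $e,f$ small, need not reach any term of $b_2$.

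The missing idea is to extract the configuration from a forced four-chain rather than from Lemma~\ref{lem:fixed-core}. The steps are:
\begin{enumerate}
\item From $s_0+s_1\le L<\min(C_0,E)$ you get $s_5+s_6>s_1$ and $s_4+s_7>s_1$.
\item After cornering $Q_0$ by Lemma~\ref{lem:pane}, every triple containing $Q_0$ in the prefix $Q_0,\ldots,Q_6$ clears $C_0$. Lemmas~\ref{lem:full-frame} and~\ref{lem:two-cover} then give a four-square chain among $Q_1,\ldots,Q_6$.
\item Every four-subset other than $\mathcal W=\{3,4,5,6\}$ has sum at least $s_2+s_4+s_5+s_6>J_4$. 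So the chain is $\mathcal W$, horizontal after interchanging axes.
\item This forces $1,2$ into the exclusive right arm as a vertical chain with upper square $t$. The triples $\{1,2,3\}$ and $\{1,2,4\}$ then put $3,4$ left of $t$.
\item Exactly one of $5,6$ lies in the right arm, while the other crosses $x=s_0$ past $x_t$. The sum $J_{67}$ puts both above $t$.
\item The two placements of $Q_7$ (right of $Q_0$, or above it) yield the horizontal chain $\{3,4,5,6,7\}$ or $\{3,4,7,t\}$. These clear $J_4$ because $W+s_7>J_4$ and $s_2+s_3+s_4+s_7>J_4$.
\end{enumerate}
Thus the final contradiction comes from $J_4$, with $C_0$, $E$, and $J_{67}$ used only to corner, frame, and pin.
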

\begin{figure}[t]\centering
\begin{minipage}[t]{0.45\textwidth}\centering
\begin{tikzpicture}[x=0.0582cm,y=0.0582cm]
\useasboundingbox (-7.38,-9.43) rectangle (82,82);
\filldraw[fill=black!7,draw=black,line width=.35pt] (0,0) rectangle (53,53);
\node[font=\scriptsize,inner sep=0pt] at (26.5,26.5) {$0$};
\filldraw[fill=black!7,draw=black,line width=.35pt] (53,0) rectangle (82,29);
\node[font=\scriptsize,inner sep=0pt] at (67.5,14.5) {$1$};
\filldraw[fill=black!7,draw=black,line width=.35pt] (54,29) rectangle (82,57);
\node[font=\scriptsize,inner sep=0pt] at (68,43) {$2$};
\filldraw[fill=black!7,draw=black,line width=.35pt] (0,62) rectangle (20,82);
\node[font=\scriptsize,inner sep=0pt] at (10,72) {$3$};
\filldraw[fill=black!7,draw=black,line width=.35pt] (20,63) rectangle (39,82);
\node[font=\scriptsize,inner sep=0pt] at (29.5,72.5) {$4$};
\filldraw[fill=black!7,draw=black,line width=.35pt] (39,65) rectangle (56,82);
\node[font=\scriptsize,inner sep=0pt] at (47.5,73.5) {$5$};
\filldraw[fill=black!7,draw=black,line width=.35pt] (67,67) rectangle (82,82);
\node[font=\scriptsize,inner sep=0pt] at (74.5,74.5) {$6$};
\filldraw[fill=black!7,draw=black,line width=.35pt] (56,71) rectangle (67,82);
\node[font=\scriptsize,inner sep=0pt] at (61.5,76.5) {$7$};
\draw[densely dashed,line width=.3pt] (53,0) -- (53,82);
\draw[densely dashed,line width=.3pt] (0,53) -- (82,53);
\node[font=\tiny,inner sep=1pt,left] at (0,53) {$s_0$};
\draw[line width=.7pt] (0,0) rectangle (82,82);
\filldraw[fill=hc!25,draw=hc!70!black,line width=.25pt] (0,-8.2) rectangle (20,-3.28);
\node[font=\tiny,inner sep=0pt] at (10,-5.74) {$3$};
\filldraw[fill=hc!25,draw=hc!70!black,line width=.25pt] (20,-8.2) rectangle (39,-3.28);
\node[font=\tiny,inner sep=0pt] at (29.5,-5.74) {$4$};
\filldraw[fill=hc!25,draw=hc!70!black,line width=.25pt] (39,-8.2) rectangle (56,-3.28);
\node[font=\tiny,inner sep=0pt] at (47.5,-5.74) {$5$};
\filldraw[fill=hc!25,draw=hc!70!black,line width=.25pt] (56,-8.2) rectangle (67,-3.28);
\node[font=\tiny,inner sep=0pt] at (61.5,-5.74) {$7$};
\filldraw[fill=hc!25,draw=hc!70!black,line width=.25pt] (67,-8.2) rectangle (82,-3.28);
\node[font=\tiny,inner sep=0pt] at (74.5,-5.74) {$6$};
\end{tikzpicture}\par
\smallskip{\footnotesize (a) $Q_7$ right of $Q_0$, $L=82$\\ $s=(53,29,28,20,19,17,15,11)$}
\end{minipage}
\hfill
\begin{minipage}[t]{0.45\textwidth}\centering
\begin{tikzpicture}[x=0.0691cm,y=0.0691cm]
\useasboundingbox (-6.21,-7.935) rectangle (69,69);
\filldraw[fill=black!7,draw=black,line width=.35pt] (0,0) rectangle (44,44);
\node[font=\scriptsize,inner sep=0pt] at (22,22) {$0$};
\filldraw[fill=black!7,draw=black,line width=.35pt] (44,0) rectangle (69,25);
\node[font=\scriptsize,inner sep=0pt] at (56.5,12.5) {$1$};
\filldraw[fill=black!7,draw=black,line width=.35pt] (46,25) rectangle (69,48);
\node[font=\scriptsize,inner sep=0pt] at (57.5,36.5) {$2$};
\filldraw[fill=black!7,draw=black,line width=.35pt] (0,52) rectangle (17,69);
\node[font=\scriptsize,inner sep=0pt] at (8.5,60.5) {$3$};
\filldraw[fill=black!7,draw=black,line width=.35pt] (17,54) rectangle (32,69);
\node[font=\scriptsize,inner sep=0pt] at (24.5,61.5) {$4$};
\filldraw[fill=black!7,draw=black,line width=.35pt] (43,55) rectangle (57,69);
\node[font=\scriptsize,inner sep=0pt] at (50,62) {$5$};
\filldraw[fill=black!7,draw=black,line width=.35pt] (57,57) rectangle (69,69);
\node[font=\scriptsize,inner sep=0pt] at (63,63) {$6$};
\filldraw[fill=black!7,draw=black,line width=.35pt] (32,58) rectangle (43,69);
\node[font=\scriptsize,inner sep=0pt] at (37.5,63.5) {$7$};
\draw[densely dashed,line width=.3pt] (44,0) -- (44,69);
\draw[densely dashed,line width=.3pt] (0,44) -- (69,44);
\node[font=\tiny,inner sep=1pt,left] at (0,44) {$s_0$};
\draw[line width=.7pt] (0,0) rectangle (69,69);
\filldraw[fill=hc!25,draw=hc!70!black,line width=.25pt] (0,-6.9) rectangle (17,-2.76);
\node[font=\tiny,inner sep=0pt] at (8.5,-4.83) {$3$};
\filldraw[fill=hc!25,draw=hc!70!black,line width=.25pt] (17,-6.9) rectangle (32,-2.76);
\node[font=\tiny,inner sep=0pt] at (24.5,-4.83) {$4$};
\filldraw[fill=hc!25,draw=hc!70!black,line width=.25pt] (32,-6.9) rectangle (43,-2.76);
\node[font=\tiny,inner sep=0pt] at (37.5,-4.83) {$7$};
\filldraw[fill=hc!25,draw=hc!70!black,line width=.25pt] (46,-6.9) rectangle (69,-2.76);
\node[font=\tiny,inner sep=0pt] at (57.5,-4.83) {$2$};
\end{tikzpicture}\par
\smallskip{\footnotesize (b) $Q_7$ above $Q_0$, $L=69$\\ $s=(44,25,23,17,15,14,12,11)$}
\end{minipage}
\caption{The two placements of $Q_7$ in the proof of
Proposition~\ref{prop:ell3}; the dashed lines are $x=s_0$ and $y=s_0$. Here
$L$ is less than $C_0$, $E$, and $J_{67}$, which are $85$, $83$, $83$ in (a)
and $70$, $70$, $71$ in (b), but $J_4\le L$. Squares $Q_1,Q_2$ form the
exclusive right arm with upper square $t=2$, and $Q_3,Q_4$ lie left of
$Q_t$. Square $Q_6$ lies in the right arm, and the bridge $u=5$ satisfies
\eqref{eq:bridge-cross}. (a) $Q_7$ lies right of $Q_0$ and above $Q_t$; the
bar records the horizontal chain $\{3,4,5,6,7\}$. (b) $Q_7$ lies above $Q_0$
and is x-disjoint from $Q_t$; the bar records the horizontal chain
$3,4,7,t$. If the remaining inequality $L<J_4$ also held, each indicated
chain would have side sum greater than $L$ by~\eqref{eq:bridge-end}, a
contradiction.}
\label{fig:bridge}
\end{figure}
\begin{proof}
Fix $\Sigma=b_2$ and suppose $L<\Sigma$.
Since $L\ge s_0+s_1$, the inequalities $L<C_0$ and $L<E$ give
\begin{equation}\label{eq:bridge-arith}
s_5+s_6>s_1,\qquad s_4+s_7>s_1.
\end{equation}
Any three of $Q_0,\ldots,Q_3$ have side sum at least $s_1+s_2+s_3\ge J_4$.
A triple consisting of $Q_0$, one of $Q_1,Q_2,Q_3$, and any third square has
side sum at least $s_0+s_3+s_7\ge E$. All these triples clear $\Sigma$, and
Lemma~\ref{lem:pane} therefore corners $Q_0$ with all eight squares present.

We first show that $Q_1,\ldots,Q_6$ contain a four-square chain, and that
it must be $\{Q_3,Q_4,Q_5,Q_6\}$. Among $Q_0,\ldots,Q_6$, every triple
containing $Q_0$ has side sum at least $C_0$ and clears $\Sigma$. Since $Q_0$ is already cornered, the
no-motion clause of Lemma~\ref{lem:full-frame} applies to the prefix
$Q_0,\ldots,Q_6$. Every four-subset of $\{1,\ldots,6\}$ except
$\mathcal W=\{3,4,5,6\}$ has sum at least
$s_2+s_4+s_5+s_6>J_4\ge\Sigma$ by \eqref{eq:bridge-arith}. Lemma~\ref{lem:two-cover} forces a four-chain,
which must therefore be $\mathcal W$. Interchange axes so that it is
horizontal. Members of the right arm pairwise x-overlap, whereas members
of $\mathcal W$ are pairwise x-disjoint. Hence at most one member of
$\mathcal W$ is in the right arm, and at least three are in the upper arm.
If $1$ or $2$ were in the upper arm, it and
those three would give a clearing four-chain different from $\mathcal W$.
Thus, $1,2$ lie in the exclusive right arm, form a vertical chain, and have
lower edges below $s_0$. Let $t$ be their upper square.

The triples $1,2,3$ and $1,2,4$ have side sum at least $J_4$. They clear
$\Sigma$ and force $3,4$ into
the exclusive upper arm. Neither can x-overlap $t$: it
would lie above $t$ and give the corresponding clearing vertical triple
with $1,2$. Since $x_3,x_4<s_0\le x_t$, both lie left of $t$.

Each of $5,6$ must x-overlap $t$, or else it and
$t,3,4$ would be a clearing horizontal quadruple. The two squares
$5,6$ are x-disjoint from one another. Exactly one belongs to the right
arm. There is at most one by the arm-overlap property. If neither did,
both would start left of $s_0$ and end right of $x_t\ge s_0$, contradicting
their x-disjointness. Denote the other, exclusive upper-arm square by $u$.
Then
\begin{equation}\label{eq:bridge-cross}
x_u<s_0\le x_t<x_u+s_u,
\end{equation}
and $u$ lies above $t$. We call $Q_u$ the bridge: it crosses the line
$x=s_0$ and reaches past the left edge of $Q_t$. The right-arm member of $\{5,6\}$ also lies
above $t$. Otherwise it would lie below $t$, and since $u$ lies above $t$,
the four squares $1,2,5,6$ would be a vertical chain of sum at least
$J_{67}$. Both $5,6$ now lie above $t$, and the same sum shows that they
y-overlap.

It remains to place the eighth square.
Figure~\ref{fig:bridge} shows the two placements of $Q_7$ treated below.
Two further side sums exceed $J_4\ge\Sigma$. With $W=s_3+s_4+s_5+s_6$,
\begin{equation}\label{eq:bridge-end}
s_2+s_3+s_4+s_7>J_4,\qquad W+s_7>J_4.
\end{equation}
For the first, $s_3+s_7\ge s_4+s_7>s_1$. For the second, subtract $J_4$;
the difference is $(s_5+s_6-s_1)+(s_3+s_7-s_2)>0$ by
\eqref{eq:bridge-arith}, because $s$ is nonincreasing.

If $7$ is x-disjoint from $0$, it lies right of $0$. The horizontal pins
$0,1,7$ and $0,2,7$ make it x-overlap both $1,2$. Hence it is
y-disjoint from both. For $k=5,6$, the quadruple $1,2,k,7$ has side sum at least $J_{67}$, giving a
vertical pin of $7$ to the chain $1,2,k$. Thus, $7$ y-overlaps both $5,6$,
and is x-disjoint from both. In particular it lies right of $u$, since
$x_7\ge s_0>x_u$. By \eqref{eq:bridge-cross}, $x_7>x_t$. Hence it is also
x-disjoint from $3,4$, which lie left of $t$. The five squares
$3,4,5,6,7$ now form a horizontal chain that clears $\Sigma$
by~\eqref{eq:bridge-end}.

Otherwise $7$ x-overlaps $0$ and lies above it. The triples
$0,3,7$ and $0,4,7$ have side sum at least $E$, and their vertical pins make $7$
x-disjoint from $3,4$.
If it is also x-disjoint from $t$, the horizontal chain $3,4,7,t$ has side
sum at least $s_2+s_3+s_4+s_7$, which exceeds $J_4$ by \eqref{eq:bridge-end}. If not, it is above $t$, since
$y_7\ge s_0>y_t$, and hence y-disjoint from $1,2$. The same pins, from the quadruples $1,2,k,7$,
make it x-disjoint from $5,6$. Again $3,4,5,6,7$ form the forbidden
horizontal five-chain. This proves the bound.
\end{proof}

\subsection{The weighted-chain bound}\label{sec:weighted-bound}
\begin{proposition}\label{prop:ell4}
For every $s\in\ell_2^\downarrow$,
\[
L_8^*(s)\ge b_3(s):=\min(J_7,\ V),
\]
where $J_7=s_1+s_2+s_7$ and $V=s_2+s_5+s_6+s_7$.
\end{proposition}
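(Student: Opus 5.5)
We argue by contradiction. Let $\Sigma=b_3=\min(J_7,V)$ and suppose some packing has side $L<\Sigma$. Our main tool is the weighted-chain comparison, Lemma~\ref{lem:weighted}. Call $Q_0,Q_1,Q_2$ the \emph{large} squares and $Q_3,\ldots,Q_7$ the \emph{small} ones. The first step is to record which chains clear $\Sigma$, using only that $s$ is nonincreasing.
\begin{itemize}
\item Any three squares including two large ones have side sum at least $s_1+s_2+s_7=J_7$. So a chain contains at most two large squares, and if it contains two, it contains nothing else.
\item Any four squares including a large one have side sum at least $s_2+s_5+s_6+s_7=V$. So a chain with exactly one large square has at most two small ones.
\end{itemize}
Hence the chains of the packing are of three types: two large squares, one large square with at most two small ones, or small squares only.

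The second step applies Lemma~\ref{lem:weighted} with weights $w_0=w_1=w_2=1$ and $w_3=\cdots=w_7=\tfrac12$, and $M=2$. Chains of the first two types have weight at most $2$. A chain of small squares only has weight at most $2$ unless it consists of all five small squares. If no such five-chain exists, the lemma gives
\[
\sum_i w_i^2=3+\tfrac54>4=M^2,
\]
a contradiction. In particular this finishes the case $s_3+s_4\ge s_2$, because then $s_3+s_4+s_5+s_6+s_7\ge V$ and the five small squares clear $\Sigma$.

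The remaining case, which I expect to be the main obstacle, is a packing in which the five small squares form a chain; interchange axes so that it is horizontal. Pure reweighting cannot handle this case: optimizing large weight $a$ and small weight $b$ subject to $2a\le1$, $a+2b\le1$, $5b\le1$ gives total area only $0.95<1$. So I would first extract geometric relations. Each large square together with any three small squares clears $V$, so each large square must x-overlap at least three of the five small squares, and is therefore y-disjoint from them. By pigeonhole, two large squares, say $Q_a$ and $Q_b$, x-overlap a common small square $Q_k$.
\begin{itemize}
\item If $Q_a$ and $Q_b$ lie on opposite sides of $Q_k$, or are y-disjoint from each other, then $\{a,b,k\}$ is a vertical chain clearing $J_7$, a contradiction.
\item Otherwise $Q_a$ and $Q_b$ lie on the same side of $Q_k$, y-overlap each other, and are x-disjoint. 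Both x-overlap the narrow square $Q_k$, whose side is at most $s_3$.
\end{itemize}
In the surviving configuration I would bring in the third large square $Q_c$ and the remaining small squares through further pins of the same type, aiming either for a clearing chain directly, or for a modified strategy in which Lemma~\ref{lem:weighted} succeeds. For the latter, one option is to raise the weights of the small squares overlapped by $Q_a$ and $Q_b$, since those squares can no longer lie in a long chain with either of them. A second fallback is to use $L\ge s_0+s_1$ (Lemma~\ref{lem:chain}) together with $s_3+s_4<s_2$ to compare the width of the horizontal five-chain with the room left beside the large squares. I expect this last configuration analysis to carry most of the difficulty of the proof.
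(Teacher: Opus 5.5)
Your reduction is correct and matches the paper's. The weights $1$ and $\tfrac12$ with $M=2$ are the paper's weights $2$ and $1$ with $M=4$ rescaled, and they force the five small squares into a chain, which you may take horizontal. Your observation that each large square x-overlaps at least three of the five small squares is also right. But the proposal stops there. The ``surviving configuration'' is left open, and the plans you sketch for it (raising weights, comparing widths using $s_3+s_4<s_2$) are not carried out. As written, the last case, and hence the proposition, is not proved.

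The missing step is short and uses only what you already have. Any two subsets of size at least three of a five-element set intersect. So \emph{every} pair of large squares, not just one pair, x-overlaps a common small square. Your case analysis then shows that every pair of large squares y-overlaps and is therefore x-disjoint. Thus $Q_0,Q_1,Q_2$ form a horizontal chain with side sum $s_0+s_1+s_2\ge J_7>L$, which your first bullet already excludes.

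The paper instead finds a single small square overlapped by all three large squares. Let $q$ be the middle small square in left-to-right order. A large square x-disjoint from $q$ lies on one side of it, hence is x-disjoint from $q$ and from the two small squares beyond it. That gives a forbidden chain of one large and three small squares. So all three large squares x-overlap $q$. Two of them on opposite vertical sides of $q$, or y-disjoint on the same side, would form a vertical triple with $q$. Hence they pairwise y-overlap and again form a horizontal triple. Pigeonhole alone cannot supply such a common square, since three $3$-subsets of a $5$-set may have empty intersection. The middle square works because the small squares a large square x-overlaps are consecutive in the left-to-right order. No further reweighting or width comparison is needed.
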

\begin{proof}
Suppose $L<\Sigma:=b_3$. For this proof call $0,1,2$ large and
$3,4,5,6,7$ small. A
chain containing two large squares and a third square, or one large square
and three small squares, has side sum at least $J_7$ or $V$, respectively,
and clears $\Sigma$.

Give the large squares weight two and the small squares weight one. If
every chain had weight at most four, Lemma~\ref{lem:weighted} would give
$3\cdot2^2+5\cdot1^2=17\le4^2=16$, a contradiction.
Thus, some chain has weight at least five. If it contains a large square,
it contains one of the two forbidden types above. Hence all five small
squares form a chain. Interchange axes so that it is horizontal, and let
$q$ be its middle square in left-to-right order.

Every large square x-overlaps $q$. Otherwise it lies on one
side of $q$ and is x-disjoint from at least three small squares, giving
a clearing horizontal chain. Thus, $q$ is y-disjoint from each large square.
Two large squares cannot lie on opposite sides of $q$ vertically, since
they would form a clearing triple with it. Nor can two on the same side
be y-disjoint from one another, for the same reason. The three large
squares therefore y-overlap pairwise and form a clearing
horizontal triple. This contradiction proves the bound.
\end{proof}

\subsection{Two bounds from a right-hand stack}\label{sec:stack}
In this subsection $Q_0,\ldots,Q_3$ are large, $Q_4,Q_5$ are medium, and
$Q_6,Q_7$ are small. We first show that the medium squares $Q_4,Q_5$ form a
vertical stack on the right, above $Q_1$; this is the common part of the two
proofs.
Figure~\ref{fig:stack} shows the configurations that it produces.

\begin{figure}[!htbp]\centering
\begin{minipage}[t]{0.45\textwidth}\centering
\begin{tikzpicture}[x=0.0672cm,y=0.0672cm]
\useasboundingbox (-6.39,-8.165) rectangle (71,71);
\filldraw[fill=black!7,draw=black,line width=.35pt] (0,0) rectangle (39,39);
\node[font=\scriptsize,inner sep=0pt] at (19.5,19.5) {$0$};
\filldraw[fill=black!7,draw=black,line width=.35pt] (39,0) rectangle (71,32);
\node[font=\scriptsize,inner sep=0pt] at (55,16) {$1$};
\filldraw[fill=black!7,draw=black,line width=.35pt] (0,43) rectangle (28,71);
\node[font=\scriptsize,inner sep=0pt] at (14,57) {$2$};
\filldraw[fill=black!7,draw=black,line width=.35pt] (28,46) rectangle (53,71);
\node[font=\scriptsize,inner sep=0pt] at (40.5,58.5) {$3$};
\filldraw[fill=black!7,draw=black,line width=.35pt] (53,53) rectangle (71,71);
\node[font=\scriptsize,inner sep=0pt] at (62,62) {$4$};
\filldraw[fill=black!7,draw=black,line width=.35pt] (56,32) rectangle (71,47);
\node[font=\scriptsize,inner sep=0pt] at (63.5,39.5) {$5$};
\filldraw[fill=black!7,draw=black,line width=.35pt] (39,32) rectangle (48,41);
\node[font=\scriptsize,inner sep=0pt] at (43.5,36.5) {$6$};
\filldraw[fill=black!7,draw=black,line width=.35pt] (48,32) rectangle (56,40);
\node[font=\tiny,inner sep=0pt] at (52,36) {$7$};
\draw[densely dashed,line width=.3pt] (39,0) -- (39,71);
\draw[densely dashed,line width=.3pt] (0,39) -- (71,39);
\node[font=\tiny,inner sep=1pt,left] at (0,39) {$s_0$};
\draw[line width=.7pt] (0,0) rectangle (71,71);
\filldraw[fill=hc!25,draw=hc!70!black,line width=.25pt] (0,-7.1) rectangle (39,-2.84);
\node[font=\tiny,inner sep=0pt] at (19.5,-4.97) {$0$};
\filldraw[fill=hc!25,draw=hc!70!black,line width=.25pt] (39,-7.1) rectangle (48,-2.84);
\node[font=\tiny,inner sep=0pt] at (43.5,-4.97) {$6$};
\filldraw[fill=hc!25,draw=hc!70!black,line width=.25pt] (48,-7.1) rectangle (56,-2.84);
\node[font=\tiny,inner sep=0pt] at (52,-4.97) {$7$};
\filldraw[fill=hc!25,draw=hc!70!black,line width=.25pt] (56,-7.1) rectangle (71,-2.84);
\node[font=\tiny,inner sep=0pt] at (63.5,-4.97) {$5$};
\end{tikzpicture}\par
\smallskip{\footnotesize (a) Proposition~\ref{prop:ell5}, $L=71$\\ $s=(39,32,28,25,18,15,9,8)$}
\end{minipage}
\hfill
\begin{minipage}[t]{0.45\textwidth}\centering
\begin{tikzpicture}[x=0.0769cm,y=0.0769cm]
\useasboundingbox (-5.58,-7.13) rectangle (62,62);
\filldraw[fill=black!7,draw=black,line width=.35pt] (0,0) rectangle (33,33);
\node[font=\scriptsize,inner sep=0pt] at (16.5,16.5) {$0$};
\filldraw[fill=black!7,draw=black,line width=.35pt] (33,0) rectangle (62,29);
\node[font=\scriptsize,inner sep=0pt] at (47.5,14.5) {$1$};
\filldraw[fill=black!7,draw=black,line width=.35pt] (0,36) rectangle (26,62);
\node[font=\scriptsize,inner sep=0pt] at (13,49) {$2$};
\filldraw[fill=black!7,draw=black,line width=.35pt] (26,42) rectangle (46,62);
\node[font=\scriptsize,inner sep=0pt] at (36,52) {$3$};
\filldraw[fill=black!7,draw=black,line width=.35pt] (46,46) rectangle (62,62);
\node[font=\scriptsize,inner sep=0pt] at (54,54) {$4$};
\filldraw[fill=black!7,draw=black,line width=.35pt] (48,29) rectangle (62,43);
\node[font=\scriptsize,inner sep=0pt] at (55,36) {$5$};
\filldraw[fill=black!7,draw=black,line width=.35pt] (33,29) rectangle (43,39);
\node[font=\scriptsize,inner sep=0pt] at (38,34) {$6$};
\filldraw[fill=black!7,draw=black,line width=.35pt] (43,29) rectangle (48,34);
\node[font=\tiny,inner sep=0pt] at (45.5,31.5) {$7$};
\draw[densely dashed,line width=.3pt] (33,0) -- (33,62);
\draw[densely dashed,line width=.3pt] (0,33) -- (62,33);
\node[font=\tiny,inner sep=1pt,left] at (0,33) {$s_0$};
\draw[line width=.7pt] (0,0) rectangle (62,62);
\filldraw[fill=hc!25,draw=hc!70!black,line width=.25pt] (0,-6.2) rectangle (26,-2.48);
\node[font=\tiny,inner sep=0pt] at (13,-4.34) {$2$};
\filldraw[fill=hc!25,draw=hc!70!black,line width=.25pt] (33,-6.2) rectangle (43,-2.48);
\node[font=\tiny,inner sep=0pt] at (38,-4.34) {$6$};
\filldraw[fill=hc!25,draw=hc!70!black,line width=.25pt] (43,-6.2) rectangle (48,-2.48);
\node[font=\tiny,inner sep=0pt] at (45.5,-4.34) {$7$};
\filldraw[fill=hc!25,draw=hc!70!black,line width=.25pt] (48,-6.2) rectangle (62,-2.48);
\node[font=\tiny,inner sep=0pt] at (55,-4.34) {$5$};
\end{tikzpicture}\par
\smallskip{\footnotesize (b) Proposition~\ref{prop:ell6}, $L=62$\\ $s=(33,29,26,20,16,14,10,5)$}
\end{minipage}
\caption{The right-hand stack of Lemma~\ref{lem:right-stack}, with
$p=2$, $c=3$, and medium squares $b=5$ (lower) and $t=4$ (upper); the dashed
lines are $x=s_0$ and $y=s_0$. In both panels every family in the hypotheses
of the lemma has side sum greater than $L$, and $Q_6,Q_7$ lie above $Q_1$ and
below $Q_3$, each y-overlapping $Q_b$. (a) $L=71$ is less than $G_7=72$,
$U=72$, $H_5=72$, $\Ppi=74$, and $\Prho=73$, but $\Pchi=71$; the bar records
the chain $0,6,7,b$ that ends the proof of Proposition~\ref{prop:ell5}.
(b) $L=62$ is less than $K=64$, $G_6=63$, $U=63$, $H_5=63$, and $J_6=65$,
but $V=55$; the bar records the chain $\{2,b,6,7\}$ that ends the proof of
Proposition~\ref{prop:ell6}.}
\label{fig:stack}
\end{figure}

\begin{lemma}[Right-hand stack]\label{lem:right-stack}
Suppose $Q_0$ is cornered, the hypotheses of Lemma~\ref{lem:fixed-core}
hold, and the following additional families clear $\Sigma$:
\[
\{0,1,e\}\ (e=6,7),\qquad \{2,3,4,5\},\qquad
\{0,4,5\},\qquad \{1,4,5,e\}\ (e=6,7).
\]
Use the labels $p,c\in\{2,3\}$ from that lemma. Then $Q_4,Q_5$ form a
vertical stack with lower square $Q_b$ and upper square $Q_t$, both lie right
of $Q_c$ and above the original $Q_1$, and
\begin{equation}\label{eq:right-stack-frame}
x_p+s_p\le x_c<s_0\le y_c.
\end{equation}
Square $Q_1$ can be lowered to the floor in the full eight-square packing.
In the resulting packing every square other than $Q_0,Q_1$ lies above $Q_1$.
\end{lemma}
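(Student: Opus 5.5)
We begin from the conclusions of Lemma~\ref{lem:fixed-core}. With $Q_0=[0,s_0]^2$ cornered and the axes possibly interchanged, we have $x_1\ge s_0$ and $y_1<s_0$. The indices $p,c\in\{2,3\}$ satisfy: $p$ x-overlaps $0$, $c$ x-overlaps $1$ and lies above $1$, and $p$ lies left of $c$. Each of $4,5$ is x-disjoint from both $p,c$ and y-overlaps both.

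\emph{Step 1: the frame inequality.} Since $p$ x-overlaps $0$, it is y-disjoint from $0$, hence above $0$; so $y_p\ge s_0$ and $x_p<s_0$. Since $c$ is above $1$, we get $y_c\ge y_1+s_1$. The triple $\{0,1,c\}$ clears $\Sigma$, so $c$ cannot be vertically aligned above both. Because $c$ y-overlaps $4,5$ and is x-disjoint from them, and $p$ lies left of $c$, the vertical pin of $c$ to $0,1$ should give that $c$ x-overlaps $0$. This yields $x_c<s_0$ and then $y_c\ge s_0$. Together with $x_p+s_p\le x_c$ this is \eqref{eq:right-stack-frame}. I expect to need the clearing of $\{0,b,m\}$ and $\{1,b,m\}$ from Lemma~\ref{lem:fixed-core} to exclude $c$ being right of $0$ while still above $1$.

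\emph{Step 2: the stack.} Each $m\in\{4,5\}$ is x-disjoint from $p$ and $c$ and y-overlaps both, so it lies on one horizontal side of the pair. It cannot lie left of $p$: then $m,p,c$ would be horizontally ordered, and together with the other medium square (which must also be x-disjoint) we would obtain $\{2,3,4,5\}$ or a triple $\{0,b,m\}$ as a clearing horizontal chain. It cannot sit between $p$ and $c$ for a similar reason. Hence both lie right of $c$. If $4,5$ were x-disjoint from each other, then $p,c,4,5$ would be a horizontal chain, which is excluded by $\{2,3,4,5\}$. So $4,5$ are y-disjoint and form a vertical stack, with lower square $b$ and upper square $t$. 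Since $x_m>x_c\ge$ (the left edge of a square x-overlapping $1$), each $m$ would x-overlap $1$, or else $\{0,1,m\}$ horizontally clears. Then $\{1,4,5\}$ is vertical, and $\{0,4,5\}$ clearing forces $1$ below both rather than above. So both lie above the original $Q_1$.

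\emph{Step 3: lowering $Q_1$.} Every square that x-overlaps $1$ and lies below it would block the descent. Such a square $e$ lies below $1$ with $y_1<s_0$. Candidates $e=2,3,4,5$ are excluded by Steps 1--2. For $e=6,7$, a square below $1$ that x-overlaps it, combined with the stack $1,4,5$ above, gives the vertical chain $\{1,4,5,e\}$, which clears $\Sigma$. Therefore no blocker exists, and $Q_1$ can be moved to the floor. Clearing-based exclusions persist under this motion. Finally, every other square lies above the lowered $Q_1$ or is x-disjoint from it. For $e=6,7$, if $e$ were right of $0$ and y-overlapping $1$, then $\{0,1,e\}$ would be a horizontal chain, which clears. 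If $e$ were above $0$, we should check that it lies above the lowered $Q_1$; this follows from $y_e\ge s_0>s_1$ when it x-overlaps $1$.

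The main obstacle is Step 1 and the exclusion in Step 2 of the medium squares lying left of $p$. The cleanest route is to track $x$-coordinates relative to the line $x=s_0$ and repeatedly apply the pins supplied by the hypotheses of Lemma~\ref{lem:fixed-core}.
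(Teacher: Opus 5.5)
There is a genuine gap: Steps 1 and 2 rely on mechanisms that do not work. Step 3 is fine once they are repaired.

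\textbf{Step 1 does not prove $x_c<s_0$.} There is no vertical pin of $Q_c$ to the pair $Q_0,Q_1$. Since $0\le y_1<s_0$, these two squares y-overlap, so the clearing triple $\{0,1,c\}$ acts only through the horizontal pin. That pin is already satisfied by the x-overlap of $c$ with $1$, so it cannot force $c$ to x-overlap $0$.

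The families $\{0,c,m\}$ you expect to use exclude $x_c\ge s_0$ only once you know that some medium square lies right of both $Q_0$ and $Q_c$. Establishing that requires the hypothesis $\{0,4,5\}$, which your Step 1 never uses. So the stack must be located first:
\begin{itemize}
\item Your observation that $\{2,3,4,5\}$ forces $4,5$ to x-overlap needs no positional information and should come first. Call the lower square $b$ and the upper $t$.
\item If $y_b\ge s_0$, then $0,b,t$ is a vertical chain cleared by $\{0,4,5\}$. Hence $y_b<s_0$ and $x_b\ge s_0$.
\item Since $b$ and $c$ are x-disjoint, $c$ right of $b$ would make $0,b,c$ a horizontal chain cleared by $\{0,c,b\}$. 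So $b$, and with it $t$, lies right of $c$.
\item Now $x_c\ge s_0$ would make $0,c,b$ a horizontal chain. Hence $x_c<s_0$, and $y_c\ge s_0$ follows.
\end{itemize}
Your separate exclusions of a medium square lying left of $p$, or between $p$ and $c$, are not actually argued. The triple $\{2,3,m\}$ is not a clearing family, so a horizontal chain $m,p,c$ is not by itself a contradiction. The order above makes those exclusions unnecessary.

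\textbf{The last claim of Step 2 is wrongly justified.} You say $\{0,4,5\}$ forces $Q_1$ below both medium squares. But $\{1,4,5\}$ is not a clearing family. Also, $\{0,4,5\}$ constrains only the stack relative to $Q_0$ (it gives $y_b<s_0$), not relative to $Q_1$. The correct tool is the triple $\{1,c,m\}$ from Lemma~\ref{lem:fixed-core}:
\begin{itemize}
\item Because $c$ x-overlaps $1$ and $m$ lies right of $c$, you get $x_m\ge x_c+s_c>x_1$. Your inequality involving ``the left edge of a square x-overlapping $1$'' does not say this.
\item If $m$ were x-disjoint from $1$, it would lie right of $1$ and form the horizontal chain $0,1,m$. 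So $m$ x-overlaps $1$.
\item If $m$ were below $1$, then $m,1,c$ would be a vertical chain, since $c$ lies above $1$.
\end{itemize}

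With these two repairs, Step 3 works as written. For the final assertion, the paper's uniform argument is quicker. Each $\{0,1,j\}$ with $j\ne0,1$ clears, so every such $Q_j$ has a horizontal pin to $Q_0,Q_1$. With $Q_1$ on the floor, this gives $y_j\ge\min(s_0,s_1)=s_1$.
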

\begin{proof}
Lemma~\ref{lem:fixed-core} gives the relations among the fixed squares
$p,c,4,5$. If $4,5$ were x-disjoint, they and $p,c$ would form a clearing horizontal
quadruple. Thus, $4,5$ x-overlap and form a vertical stack with lower square $b$ and
upper square $t$.

If $y_b\ge s_0$, the vertical chain $0,b,t$ clears $\Sigma$. Hence $y_b<s_0$ and
$x_b\ge s_0$. By Lemma~\ref{lem:fixed-core}, $b$ and $c$ are x-disjoint. If $c$ were
right of $b$, the chain $0,b,c$ would clear $\Sigma$. Thus, $b$ lies right of $c$.
Since $t$ x-overlaps $b$ and is x-disjoint from $c$, it too
lies right of $c$. If $x_c\ge s_0$, then the chain $0,c,b$ would clear $\Sigma$. Hence
$x_c<s_0$. Since $0\le x_c<s_0$, square $c$ x-overlaps $0$,
and hence $y_c\ge s_0$.
Together with $p$ left of $c$, this proves \eqref{eq:right-stack-frame}.

For $m=4,5$, we have $x_m\ge x_c+s_c>x_1$. If $m$ were x-disjoint from
$1$, it would lie right of $1$ and form a clearing horizontal chain
$0,1,m$. Therefore, $m$ x-overlaps $1$ and is y-disjoint from it.
It cannot lie below $1$, because $m,1,c$ would be a vertical chain clearing $\Sigma$. Both
medium squares thus lie above the original $1$.

Squares $0$ and $p$ are x-disjoint from $1$, the second by
\eqref{eq:right-stack-frame} and $x_1\ge s_0$, while $c,4,5$ lie above $1$.
A downward blocker of $1$ can therefore only be $6$ or $7$. Such a blocker $e$
would complete the clearing vertical chain $e,1,4,5$. Thus, no blocker
exists and $1$ can be lowered in the full packing. For every
other index $j\ne0,1$, the horizontal pin to the pair $0,1$, with $1$ now on the floor,
forces x-overlap with at least one of them. It follows that
$y_j\ge s_1$, using $s_0\ge s_1$.
\end{proof}

The first application of Lemma~\ref{lem:right-stack} is a bound by six
side sums.
\begin{proposition}\label{prop:ell5}
For every $s\in\ell_2^\downarrow$,
\[
L_8^*(s)\ge b_4(s):=\min(G_7,\ U,\ H_5,\ \Ppi,\ \Prho,\ \Pchi),
\]
where
\begin{gather*}
G_7=s_0+s_3+s_7,\quad U=s_0+s_4+s_5,\quad H_5=s_1+s_3+s_5,\\
\Ppi=s_1+s_3+s_6+s_7,\quad \Prho=s_1+s_4+s_5+s_7,\quad
\Pchi=s_0+s_5+s_6+s_7.
\end{gather*}
\end{proposition}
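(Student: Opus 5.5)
We argue by contradiction. Put $\Sigma=b_4$ and suppose a packing has side $L<\Sigma$. The plan is to check that every clearing hypothesis of Lemmas~\ref{lem:pane}, \ref{lem:fixed-core} and~\ref{lem:right-stack} follows from the six sums in $b_4$. We then use the right-hand stack to pin down $Q_6,Q_7$, and finish with a clearing chain, which we expect to be $0,6,7,b$ of sum at least $\Pchi$, as in Figure~\ref{fig:stack}(a).

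\emph{Hypotheses.} We use monotonicity of $s$ throughout.
\begin{itemize}
\item Every triple among $Q_0,\dots,Q_3$ has sum at least $s_1+s_2+s_3\ge H_5$.
\item Every triple $\{0,b,j\}$ with $b\le3$ has sum at least $s_0+s_3+s_7=G_7$.
\end{itemize}
Hence Lemma~\ref{lem:pane} corners $Q_0$ in the full packing. For Lemma~\ref{lem:fixed-core}, the families $\{0,1,m\}$ and $\{0,b,m\}$ have sum at least $G_7$, and $\{1,b,m\}$ with $b\in\{2,3\}$, $m\in\{4,5\}$ has sum at least $s_1+s_3+s_5=H_5$. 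For Lemma~\ref{lem:right-stack}:
\begin{itemize}
\item $\{0,1,e\}$ has sum at least $G_7$;
\item $\{2,3,4,5\}$ has sum at least $s_1+s_3+s_6+s_7$ termwise after reindexing, hence at least $\Ppi$;
\item $\{0,4,5\}=U$;
\item $\{1,4,5,e\}$ has sum at least $\Prho$.
\end{itemize}
So Lemma~\ref{lem:right-stack} applies. We obtain the stack $b$ (lower) and $t$ (upper) right of $Q_c$, the frame~\eqref{eq:right-stack-frame}, and, after lowering $Q_1$, every square other than $Q_0,Q_1$ lying above $Q_1$.

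\emph{Placing $Q_6,Q_7$.} Fix $e\in\{6,7\}$. The quadruple $1,b,t,e$ clears via $\Prho$, so $e$ cannot extend the vertical chain $1,b,t$. Since $e$ lies above $1$, it must y-overlap $b$ or $t$, and hence be x-disjoint from it. Similarly:
\begin{itemize}
\item the triple $0,1,e$ clears, so $e$ x-overlaps $0$ or $1$;
\item the chain $1,c,6,7$ clears via $\Ppi$, so $6,7$ cannot both lie in a vertical chain with $1$ and $c$;
\item $0,c,e$ clears via $G_7$.
\end{itemize}
The intended conclusion is that both $6$ and $7$ lie above $1$, below $c$, left of the stack and right of $0$, and y-overlap $b$; this is the configuration of Figure~\ref{fig:stack}(a). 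We plan to establish it by cases on whether $e$ lies above $0$ (that is, $x_e<s_0$) or to its right. In the first case, the pins from $0,c,e$, together with $x_c<s_0\le y_c$, should force $e$ below $c$ and x-disjoint from $c$, which leads to a contradiction with the frame or to a clearing chain through $p$, $c$ or $t$. In the second case, $e$ x-overlaps $1$, and the quadruple pin above makes it y-overlap $b$. We then need to exclude $e$ sitting right of $b$ using the right wall and $x_b\ge s_0$.

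\emph{Conclusion and main obstacle.} Once both $6$ and $7$ lie right of $0$, left of $b$, and are mutually x-disjoint, the horizontal chain $0,6,7,b$ has sum at least $s_0+s_6+s_7+s_5=\Pchi>L$, a contradiction. Mutual x-disjointness should come from the fact that if $6,7$ x-overlap they form a vertical pair, and then $1,6,7,c$, or $1,6,7$ together with a stack square, clears via $\Ppi$ or $\Prho$. The hardest part will be the case analysis for a single $e$ showing it cannot escape above $c$ or $t$, nor sit to the right of the stack. I would first try to exhaust the possible x-positions of $e$ relative to the line $x=s_0$ and to $x_b$, using the frame inequalities~\eqref{eq:right-stack-frame} to convert each forced overlap into a forced disjointness.
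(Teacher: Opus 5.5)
Your outline has the right shape: verify the hypotheses, apply Lemma~\ref{lem:right-stack}, trap $Q_6,Q_7$ below $Q_c$ and right of $Q_0$, and finish with the horizontal chain $0,6,7,b$. But there is a genuine gap.

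\textbf{The hypothesis check.} The family $\{2,3,4,5\}$ does not dominate $\Ppi=s_1+s_3+s_6+s_7$ termwise, since $s_2\le s_1$. Monotonicity alone does not give $s_2+s_4+s_5\ge s_1+s_6+s_7$; take $s=(10,10,1,\ldots,1)$. The correct reason uses the contradiction hypothesis. From $G_7>L\ge s_0+s_1$ you get $s_1<s_3+s_7\le s_2+s_7$. Hence every family $\{2,3,m,e\}$ with $m\in\{4,5\}$ and $e\in\{6,7\}$, and a fortiori $\{2,3,4,5\}$, has side sum at least $s_2+s_3+s_5+s_7=H_5+(s_2+s_7-s_1)>H_5\ge\Sigma$. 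This exclusion of the horizontal quadruples $p,c,m,e$ is precisely the idea missing from your placement of $Q_6,Q_7$. As written, that placement is a plan whose concrete steps do not work.

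\textbf{The placement of $Q_6,Q_7$.} It is cleaner to split on $y_e$ rather than $x_e$; your second case still contains squares with $x_e,y_e\ge s_0$, which you do not treat.

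\emph{Case $y_e\ge s_0$.} The pins to the pairs $0,p$ and $0,c$ make $e$ y-overlap both $p$ and $c$. So it cannot lie below $c$, as you suggest, and it is x-disjoint from both. The quadruple exclusion then forces $e$ to x-overlap both medium squares. Then $1,4,5,e$ is a vertical chain of sum at least $\Prho$, a contradiction.

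\emph{Case $y_e<s_0$.} The frame~\eqref{eq:right-stack-frame} puts $e$ right of $p$. The pin to $1,4,5$ makes it x-disjoint from some medium square $m$. The quadruple exclusion then forces $e$ to x-overlap $c$, hence to lie below $c$. That fact settles the pin. By Lemma~\ref{lem:fixed-core}, $b$ y-overlaps $c$, so $y_t\ge y_b+s_b>y_c\ge y_e+s_e$, and the pin must be to $b$. Moreover $e$ lies left of $b$, because it x-overlaps $c$ and $b$ lies right of $c$.

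\textbf{Why your substitutes fail.} Your wall argument against $e$ right of $b$ only yields the chain $0,b,e$, whose sum can be as small as $s_0+s_5+s_7$. Your fallback chain of $1,6,7$ with a medium square can have sum only $s_1+s_5+s_6+s_7$. Neither need clear $\Sigma$.

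Once $6,7$ are both known to lie below $c$, the chain $1,6,7,c$ (sum at least $\Ppi$) forces them to y-overlap. The final chain $0,6,7,b$ then goes through as you intend.
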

\begin{proof}
Set $\Sigma=b_4$ and suppose $L<\Sigma$. Since $G_7>L\ge s_0+s_1$, we have
$s_3+s_7>s_1$. Hence
$Z:=s_2+s_3+s_5+s_7=H_5+(s_2+s_7-s_1)>H_5\ge\Sigma$.
Thus, $\{2,3,4,5\}$ and every $\{2,3,m,e\}$ with $m=4,5$ and $e=6,7$
clear $\Sigma$. Every triple of the four large squares has side sum at least
$H_5$, and every triple $\{0,b,j\}$ with $b=1,2,3$ has side sum at least
$G_7$. These triples clear $\Sigma$ as well, and Lemma~\ref{lem:pane}
corners $0$ in the full packing. The hypotheses of Lemma~\ref{lem:fixed-core}
follow from $G_7,H_5$. The other hypotheses of
Lemma~\ref{lem:right-stack} are supplied by $Z,U,\Prho$. Apply that lemma
and lower $Q_1$ to the floor, keeping the other squares fixed.

We next show that $y_6,y_7<s_0$.
Both $p$ and $c$ lie above $0$: $p$ x-overlaps $0$ by
Lemma~\ref{lem:fixed-core}, and $y_c\ge s_0$ by
\eqref{eq:right-stack-frame}.
For $e=6,7$ we already know $y_e\ge s_1$. If $y_e\ge s_0$, the vertical
pins to $0,p$ and $0,c$ force $e$ to y-overlap both $p,c$, and
hence be x-disjoint from them. If it were also x-disjoint from either
medium square $m$, the quadruple $p,c,m,e$ would be a horizontal chain that clears $\Sigma$.
Thus, it x-overlaps both $4,5$ and is y-disjoint from both.
All three lie above $1$, and the vertical chain $1,4,5,e$ has side sum at
least $\Prho$. Hence $y_e<s_0$ and $x_e\ge s_0$.

By \eqref{eq:right-stack-frame}, $e$ is x-disjoint from $p$. The vertical
pin to the chain $1,4,5$ makes $e$ y-overlap a medium square $m$,
hence be x-disjoint from it. If it were x-disjoint from $c$ as well, the
quadruple $p,c,m,e$ would clear $\Sigma$. Thus, $e$ x-overlaps $c$
and is y-disjoint from it. Since $y_e<s_0\le y_c$, it lies below $c$.

Finally, we find a chain that clears $\Sigma$.
Both $6,7$ lie above $1$ and below $c$. If they were y-disjoint, the
vertical chain $1,6,7,c$ would have sum at least $\Ppi$. Thus, they
y-overlap and are x-disjoint. Let $b,t$ again be the lower and
upper medium squares. Since square $b$ y-overlaps $c$, we have $y_t\ge y_b+s_b>y_c$.
Neither small square, whose top edge is at most $y_c$, can y-overlap $t$. Its pin to $1,4,5$ must therefore be to $b$, making it
x-disjoint from $b$. Moreover it lies left of $b$, since it x-overlaps $c$ and $b$ lies right of $c$. Finally $0,6,7,b$ form a horizontal
chain with side sum at least $\Pchi\ge\Sigma$, a contradiction.
\end{proof}

The second application of Lemma~\ref{lem:right-stack} requires a different
cornering argument. Since the next proof does not assume $G_7\ge\Sigma$,
the pair $Q_3,Q_7$ requires separate treatment.
Figure~\ref{fig:pincer} shows the configuration excluded in the proof of the
next lemma.

\begin{figure}[!htbp]\centering
\begin{minipage}[t]{0.6\textwidth}\centering
\begin{tikzpicture}[x=0.0917cm,y=0.0917cm]
\useasboundingbox (-4.86,-6.21) rectangle (54,54);
\filldraw[fill=black!7,draw=black,line width=.35pt] (22,0) rectangle (50,28);
\node[font=\scriptsize,inner sep=0pt] at (36,14) {$0$};
\filldraw[fill=black!7,draw=black,line width=.35pt] (24,28) rectangle (50,54);
\node[font=\scriptsize,inner sep=0pt] at (37,41) {$1$};
\filldraw[fill=black!7,draw=black,line width=.35pt] (0,30) rectangle (24,54);
\node[font=\scriptsize,inner sep=0pt] at (12,42) {$2$};
\filldraw[fill=black!7,draw=black,line width=.35pt] (0,8) rectangle (22,30);
\node[font=\scriptsize,inner sep=0pt] at (11,19) {$3$};
\filldraw[fill=black!7,draw=black,line width=.35pt] (0,0) rectangle (8,8);
\node[font=\scriptsize,inner sep=0pt] at (4,4) {$4$};
\filldraw[fill=black!7,draw=black,line width=.35pt] (8,0) rectangle (15,7);
\node[font=\scriptsize,inner sep=0pt] at (11.5,3.5) {$5$};
\filldraw[fill=black!7,draw=black,line width=.35pt] (15,0) rectangle (21,6);
\node[font=\tiny,inner sep=0pt] at (18,3) {$6$};
\filldraw[fill=black!7,draw=black,line width=.35pt] (50,0) rectangle (54,4);
\node[font=\tiny,inner sep=0pt] at (52,2) {$7$};
\draw[densely dashed,line width=.3pt] (0,28) -- (54,28);
\node[font=\tiny,inner sep=1pt,left] at (0,28) {$s_0$};
\draw[line width=.7pt] (0,0) rectangle (54,54);
\filldraw[fill=hc!25,draw=hc!70!black,line width=.25pt] (0,-5.4) rectangle (8,-2.16);
\node[font=\tiny,inner sep=0pt] at (4,-3.78) {$4$};
\filldraw[fill=hc!25,draw=hc!70!black,line width=.25pt] (8,-5.4) rectangle (15,-2.16);
\node[font=\tiny,inner sep=0pt] at (11.5,-3.78) {$5$};
\filldraw[fill=hc!25,draw=hc!70!black,line width=.25pt] (22,-5.4) rectangle (50,-2.16);
\node[font=\tiny,inner sep=0pt] at (36,-3.78) {$0$};
\end{tikzpicture}\par
\smallskip{\footnotesize Lemma~\ref{lem:exceptional}, $L=54$\\ $s=(28,26,24,22,8,7,6,4)$}
\end{minipage}
\caption{The double blockage excluded in the proof of
Lemma~\ref{lem:exceptional}; the dashed line is $y=s_0$. Here $L=54$ is less
than $K=56$, $G_6=56$, $H_5=55$, $J_6=56$, and $s_1+s_2+s_3=72$, but not
less than $U=43$ or $V=41$. Square $Q_0$ is on the floor, blocked by $Q_3$
on the left and $Q_7$ on the right. Squares $Q_1,Q_2$ lie above $Q_0$, with
$a=2$ and $b=1$ in the notation of the proof, and $Q_3$ lies below $Q_2$.
Squares $Q_4,Q_5$ lie left of $Q_0$ and below $Q_3$, and x-overlap $Q_3$. The
bar records the horizontal chain $4,5,0$, excluded by $U$.}
\label{fig:pincer}
\end{figure}

\begin{lemma}[Exceptional pincer]\label{lem:exceptional}
Suppose $L<\Sigma\le\min(K,G_6,U,H_5,J_6,V)$, where $K=s_0+s_2+s_7$,
$G_6=s_0+s_3+s_6$, and $J_6=s_1+s_2+s_6$. Then $Q_0$ can be translated to a corner in the full packing.
\end{lemma}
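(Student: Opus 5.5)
The plan is to rerun the cornering argument behind Lemma~\ref{lem:pane} and isolate the one place where it fails. All its clearing requirements hold here except one. Every triple among $0,1,2,3$ has side sum at least $s_1+s_2+s_3\ge H_5$. Every triple $\{0,b,j\}$ with $b\in\{1,2,3\}$ and $j\notin\{0,b\}$ has side sum at least $s_0+s_3+s_6=G_6$, or at least $s_0+s_2+s_7=K$, except the single triple $\{0,3,7\}$ with sum $G_7$, which is not assumed to clear $\Sigma$. So I would translate $Q_0$ toward the floor and then toward a side wall, as in the proof of Lemma~\ref{lem:pane}. The only motion that can get stuck is one in which $Q_0$ is pinned between $Q_3$ and $Q_7$ on opposite sides along one axis. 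In every other blocking pattern, the two blockers together with $Q_0$ would form a clearing chain $\{0,b,j\}$.

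After reflections and an interchange of axes, the exceptional situation is the double blockage of Figure~\ref{fig:pincer}. Here $Q_0$ rests on the floor, $Q_3$ lies left of it and y-overlaps it, and $Q_7$ lies right of it and y-overlaps it. I would also treat the symmetric case with the roles of $3$ and $7$ swapped; it behaves the same way. The key steps, in order, are as follows.
\begin{enumerate}
\item For $k=1,2$, the horizontal pins to the pairs $3,0$ and $0,7$ force $Q_k$ to x-overlap $Q_0$. The relevant triples have sums at least $K$, and $\{0,1,7\}\ge K$. Since $Q_0$ is on the floor, $Q_k$ lies above $Q_0$.
\item The squares $Q_1,Q_2$ cannot be y-disjoint, since $0,1,2$ would be a clearing vertical chain. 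Hence they are x-disjoint; write $a$ for the left one and $b$ for the right one.
\item Locate $Q_3$. It cannot lie above $Q_a$, because $0,a,3$-type vertical chains clear via $s_1+s_2+s_3$. Combined with y-overlap with $Q_0$, this should force $Q_3$ below $Q_a$ and x-overlapping $Q_a$.
\item Place $Q_4,Q_5$ using pins. Vertical chains through $Q_3$ and $Q_a$ clear $\Sigma$ via $H_5$ or $J_6$, as do horizontal chains with $Q_a,Q_b$. Quadruples of the form $\{2,5,6,7\}$-type clear via $V$. These pins should drive each of $Q_4,Q_5$ into the pocket left of $Q_0$ and below $Q_3$.
\item Show that $Q_4,Q_5$ are then x-disjoint from each other and from $Q_0$, so that $\{4,5,0\}$ is a horizontal chain of sum $U\ge\Sigma>L$, a contradiction.
\end{enumerate}
Hence the double blockage is impossible, and the motion of Lemma~\ref{lem:pane} completes, cornering $Q_0$ with all eight squares present.

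I expect step~4 to be the main obstacle. There the medium squares must be confined to the pocket under $Q_3$. This needs a careful case analysis on whether each of $4,5$ x-overlaps $Q_3$, $Q_0$, or $Q_a$. Each branch must be closed by one of $H_5$, $J_6$, $V$, or $U$, and the small squares $6,7$ may act as extra blockers. First I would try to show that each medium square y-overlaps $Q_0$ or $Q_3$, by pinning it to the vertical chain $0,a$ using the triple $\{0,a,m\}\ge U$. After that, x-disjointness from $Q_0$ follows from the floor position, and the final horizontal chain follows.
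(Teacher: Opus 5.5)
Your endpoint, the horizontal chain $\{0,4,5\}$ excluded by $U$, is the right one. However, the reduction to the case ``$Q_3$ and $Q_7$ are the blockers'' is unjustified, and steps 4--5 lack the argument that actually closes the proof.

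\textbf{The reduction.} It is false that every other double blockage gives a clearing chain $\{0,b,j\}$, because both blockers may be small. For example, with blockers $Q_5,Q_6$ the chain $\{0,5,6\}$ has sum $C_0=s_0+s_5+s_6$, which the hypotheses do not bound below: at $s=(10,10,10,10,9,9,8,8)$ one has $C_0=27$ while $\min(K,G_6,U,H_5,J_6,V)=28$. So you must start from arbitrary blockers.
\begin{enumerate}
\item Your step 1 still works with the actual blockers, since $\{0,k,j\}$ with $k\in\{1,2\}$ clears via $K$ for every $j$.
\item You then need an extra step: $Q_3$ is x-disjoint from $Q_0$. Otherwise it lies above $Q_0$, its vertical pins to the pairs $0,1$ and $0,2$ make it x-disjoint from $Q_1,Q_2$, and $1,2,3$ is a clearing horizontal triple.
\item After reflecting so that $Q_3$ is left of $Q_0$, the right blocker must be $Q_7$ by $G_6$.
\item That $Q_3$ y-overlaps $Q_0$ is then a conclusion, obtained from the vertical chains $0,a,3$ and $0,3,a$. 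It is not part of the setup, as your step 3 assumes.
\end{enumerate}

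\textbf{Steps 4--5.} These are where the proof lives, and some of the tools you list do not work. A vertical chain $\{3,a,m\}$ clears via $H_5$ only when $a=1$. When $a=2$ its sum $s_2+s_3+s_m$ need not clear, and $J_6$ requires both $1$ and $2$. Also, a single pin to $0,a$ gives y-overlap with $Q_0$ or $Q_a$, not with $Q_0$ or $Q_3$. A working sequence is as follows.
\begin{enumerate}
\item For $m=4,5$: if $Q_m$ were above $Q_0$, the pins to both $0,a$ and $0,b$ would make it x-disjoint from $Q_1,Q_2$, and $\{1,2,m\}\ge J_6$ would clear. So $Q_m$ y-overlaps $Q_0$.
\item The horizontal pin to $0,3$ (via $G_6$) makes $Q_m$ x-overlap $Q_3$. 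Hence it lies left of $Q_0$, and therefore left of $Q_b$.
\item The triple $a,b,m$ then forces $Q_m$ to x-overlap $Q_a$. So $Q_m$ is y-disjoint from $Q_3$ and $Q_a$ and lies below $Q_a$.
\item Now $a=1$ is excluded by $\{1,3,m\}\ge H_5$, so $a=2$.
\item The quadruple $\{2,3,4,5\}$, of sum at least $V$, forbids $Q_4,Q_5$ from being y-disjoint. Only this makes them x-disjoint from each other, so that $\{0,4,5\}$ is a chain.
\end{enumerate}
Your pocket ``below $Q_3$'' is not produced by these pins, and it is not needed. The missing ideas are the case distinction on $a$ and the four-chain $\{2,3,4,5\}$.
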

\begin{proof}
Because $s$ is nonincreasing, the following triples clear $\Sigma$:
\begin{align*}
&\{0,1,e\},\ \{0,2,e\} &&\text{for every distinct third index }e,\\
&\{0,3,e\} &&\text{for }e\le6,\ e\ne0,3,\\
&\{1,2,e\} &&\text{for }e\le6,\ e\ne1,2,\\
&\{1,3,m\} &&\text{for }m=4,5.
\end{align*}
Their side sums are at least $K$, $G_6$, $J_6$, and $H_5$, respectively.
Every triple of the four large squares clears $\Sigma$ as well, and so does
the quadruple $2,3,4,5$, whose side sum is at least $V$.

Choose an axis on which $0,1$ are disjoint and move $0$ away from $1$
to the corresponding wall. A blocker would form a clearing triple with
$0,1$. Interchange axes and reflect so that $y_0=0$.
If $Q_0$ has no blocker on its left or no blocker on its right, move it
horizontally to the corresponding corner. Otherwise there is a blocker on
each side.

Neither $1$ nor $2$ can be x-disjoint from $0$: it would form a clearing
horizontal triple with $0$ and the opposite blocker. Thus, $1,2$ x-overlap $0$ and lie
above it. They cannot be y-disjoint, because $0,1,2$ would clear $\Sigma$; they
are therefore x-disjoint.
Square $3$ must be x-disjoint from $0$. Otherwise it lies above $0$,
and its vertical pins to $0,1$ and $0,2$ make it x-disjoint from both
$1,2$, giving the clearing horizontal triple $1,2,3$. Reflect horizontally
so that $3$ lies left of $0$, and let $a,b$ be the left-to-right order
of $1,2$. A blocker with index $j\le6$ on the opposite side would give the
horizontal chain $3,0,j$, which clears $\Sigma$.
Hence the blocker on that side is $7$.

Square $3$ x-overlaps $a$, or else $3,a,b$ form a clearing
horizontal triple. Thus, it is y-disjoint from $a$. It cannot lie above
$a$, or be y-disjoint above $0$, because $0,a,3$ would then be a vertical
chain. Hence $3$ lies below $a$ and y-overlaps $0$.

For $m=4,5$, square $m$ must also y-overlap $0$. Otherwise it
lies above $0$, and pins to $0,a$ and $0,b$ make it x-disjoint from both,
giving the clearing horizontal triple $1,2,m$. Thus, $m$ is x-disjoint
from $0$. Its horizontal pin to $0,3$ forces it to x-overlap
$3$. Hence it lies left of $0$, and it lies left of $b$ because
$x_b\ge x_a+s_a>x_0$. The forbidden triple $a,b,m$ then forces a horizontal
overlap with $a$. Therefore, $m$ is y-disjoint from both $3,a$, and lies
below $a$ because $y_m<s_0\le y_a$.

If $a=1$, the vertical triple $1,3,m$ clears $\Sigma$. Hence $a=2$.
Now $2,3$ are y-disjoint from each other and from both $4,5$. If $4,5$
were y-disjoint, the quadruple $2,3,4,5$ would be a vertical chain that clears $\Sigma$.
Otherwise they y-overlap and are x-disjoint. Both lie left of
$0$, and the horizontal triple $0,4,5$, with side sum $U$, clears $\Sigma$. This excludes the
double blockage and completes the cornering argument.
\end{proof}

\begin{proposition}\label{prop:ell6}
For every $s\in\ell_2^\downarrow$,
\[
L_8^*(s)\ge b_5(s):=\min(K,\ G_6,\ U,\ H_5,\ J_6,\ V).
\]
\end{proposition}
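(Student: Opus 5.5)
We set $\Sigma=b_5$ and suppose a packing has side $L<\Sigma$. The plan is to corner $Q_0$ with Lemma~\ref{lem:exceptional}, invoke Lemma~\ref{lem:right-stack}, and then trap $Q_6,Q_7$ between $Q_1$ and $Q_c$, mirroring the proof of Proposition~\ref{prop:ell5}. Here the final contradiction comes from the chain $\{p,6,7,b\}$, with side sum at least $V$, instead of $\{0,6,7,b\}$.

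\emph{Cornering and the stack.} Lemma~\ref{lem:exceptional} applies verbatim, since $\Sigma\le\min(K,G_6,U,H_5,J_6,V)$, so $Q_0$ can be cornered in the full packing. Next we check the hypotheses of Lemma~\ref{lem:fixed-core}:
\begin{itemize}[label={}]
\item triples among $0,1,2,3$ have side sum at least $H_5$;
\item $\{0,1,m\}$ and $\{0,b,m\}$ with $b=2,3$ and $m=4,5$ have side sum at least $G_6$;
\item $\{1,b,m\}$ has side sum at least $H_5$.
\end{itemize}
For Lemma~\ref{lem:right-stack} we need the following:
\begin{itemize}[label={}]
\item $\{0,1,e\}$ for $e=6,7$ has side sum at least $K$;
\item $\{2,3,4,5\}$ has side sum at least $V$;
\item $\{0,4,5\}$ has side sum $U$;
\item $\{1,4,5,e\}$ has side sum at least $s_1+s_4+s_5+s_7$.
\end{itemize}
This last sum exceeds $V$ because $s_1\ge s_2$ and $s_4\ge s_6$. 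Applying the lemma gives the medium stack $b,t$ right of $c$, the frame \eqref{eq:right-stack-frame}, and, after lowering $Q_1$, $y_j\ge s_1$ for every $j\ne0,1$.

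\emph{Locating $Q_6,Q_7$.} For $e=6,7$ we first show $y_e<s_0$. If $y_e\ge s_0$, vertical pins to $0,p$ and $0,c$ apply, since $\{0,p,e\},\{0,c,e\}$ have side sum at least $G_6$ when $e=6$. For $e=7$ this fails, because $G_7$ is not available; that is the case flagged before Lemma~\ref{lem:exceptional}. There I would use $\{0,2,7\}$, of side sum $K$, together with the horizontal chain $\{p,c,m,e\}$, of side sum at least $V$, and the vertical chain $\{1,4,5,e\}$. Once $e$ y-overlaps $p$ and $c$, the argument finishes as in Proposition~\ref{prop:ell5}. Hence $x_e\ge s_0$ and $e$ is x-disjoint from $p$. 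The pin to the chain $1,4,5$ makes $e$ y-overlap some medium square $m$. Then $\{p,c,m,e\}$ (side sum at least $V$) forces $e$ to x-overlap $c$, so $e$ lies below $c$. Both small squares therefore lie above $1$ and below $c$.

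\emph{Conclusion and main obstacle.} Since we lack $\Ppi$, we cannot argue as before that $6,7$ y-overlap via the chain $1,6,7,c$. Instead I would use $J_6$ and $H_5$: the vertical chain $\{1,c,e\}$ has side sum at least $s_1+s_3+s_7$, which is not obviously clearing. So the key step is to show that the small squares cannot be stacked between $Q_1$ and $Q_c$. I would try $\{1,6,7,c\}$ against $V$ using $s_1\ge s_2$ and $s_3\ge s_5$, which gives side sum at least $s_2+s_5+s_6+s_7=V$. With this, $6,7$ y-overlap, and as before each lies left of $b$ and y-overlaps it. Finally $p$ lies left of $c$, which lies left of $b$, and the small squares x-overlap $c$, so they lie right of $p$. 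Thus $\{p,6,7,b\}$ is a horizontal chain with side sum at least $s_3+s_5+s_6+s_7\ge$, hmm, this is only $\ge V$ if $p=2$. If $p=3$, I would replace it by the chain $\{2,\dots\}$ using $c=2$ and the frame, or use $\{0,6,7,b\}$ with the bound $U$ via $s_0\ge s_4$, $s_6+s_7$ versus $s_5$. Settling this $p=3$ subcase, together with the $e=7$ pin above, is where I expect the real work.
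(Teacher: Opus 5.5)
Your setup is correct. Lemma~\ref{lem:exceptional} corners $Q_0$, and your checks of the hypotheses of Lemmas~\ref{lem:fixed-core} and~\ref{lem:right-stack} are right. For $e=6$ the argument of Proposition~\ref{prop:ell5} does go through, since $\{0,2,6\}$ and $\{0,3,6\}$ clear via $K$ and $G_6$. It gives $y_6<s_0$, and shows that $Q_6$ is x-disjoint from $Q_p$, x-overlaps $Q_c$, and lies below it.

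However, the proof stops exactly at the two points where it must depart from Proposition~\ref{prop:ell5}, and neither is closed. For $Q_7$: if $y_7\ge s_0$, the only available pin is from $\{0,2,7\}$ (side sum $K$). It makes $Q_7$ y-overlap $Q_2$ and nothing more. With $G_7$ unavailable, nothing forces $Q_7$ to y-overlap the other large square, so your claims $y_7<s_0$ and ``$Q_7$ lies below $Q_c$'' are unsupported. For $p=3$: the chain $\{p,6,7,b\}$ only has side sum at least $s_3+s_5+s_6+s_7$. Your fallback $\{0,6,7,b\}$ needs $Q_7$ to lie right of $Q_0$, which is again the unproved $y_7<s_0$.

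The missing idea is that $J_6$ fixes the labels, after which $y_7<s_0$ is never needed. After $Q_1$ is lowered, $Q_6$ lies above $Q_1$ and, by what you proved, below $Q_c$. So $\{1,6,c\}$ is a vertical chain, and if $c=2$ its side sum is $J_6\ge\Sigma$. Hence $p=2$ and $c=3$, and the $p=3$ subcase never occurs. The paper reaches the same conclusion from the vertical pin of $Q_6$ to the pair $Q_1,Q_2$: this makes $Q_6$ x-disjoint from $Q_2$, hence x-overlapping $Q_3$.

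With $p=2$ known, $Q_7$ is x-disjoint from $Q_2$ in both cases:
\begin{itemize}
\item if $y_7<s_0$, it lies right of $Q_0$, while $Q_2$ lies left of $x=s_0$ by \eqref{eq:right-stack-frame};
\item if $y_7\ge s_0$, the pin from $\{0,2,7\}$ makes it y-overlap $Q_2$.
\end{itemize}
The pin to $1,4,5$ and the quadruple $\{2,3,m,7\}$ then force $Q_7$ to x-overlap $Q_3$. Since $\{1,3,6,7\}$ clears, $Q_6$ and $Q_7$ y-overlap, so they lie on the same side of $Q_3$, namely below it as $Q_6$ does. Your concluding argument then goes through with $p=2$ and gives the horizontal chain $\{2,b,6,7\}$ with side sum at least $V$.
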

\begin{proof}
Set $\Sigma=b_5$ and suppose $L<\Sigma$. The triple exclusions in
Lemma~\ref{lem:exceptional} hold. In addition, each of the following families has side sum at least $V$:
\begin{equation}\label{eq:theta-quads}
\{2,3,4,5\},\quad \{2,3,m,e\}\ (m=4,5;\ e=6,7),\quad
\{1,4,5,e\}\ (e=6,7),\quad \{1,3,6,7\}.
\end{equation}
For example, the least sums in the last two families are
$s_1+s_4+s_5+s_7\ge V$ and $s_1+s_3+s_6+s_7\ge V$.

Lemma~\ref{lem:exceptional} corners $0$. The hypotheses of
Lemma~\ref{lem:fixed-core} follow from the displayed triple
exclusions, and those of Lemma~\ref{lem:right-stack} follow from
$U$ and \eqref{eq:theta-quads}. Thus, we obtain the right-hand stack,
\eqref{eq:right-stack-frame}, and $Q_1$ lowered to the floor in the full
packing.

We next use square $6$ to identify the labels $p,c$ of the large squares.
All squares other than $0,1$ lie above $1$. The triple $1,2,6$
clears $\Sigma$, and its vertical pin makes $6$ y-overlap $2$, hence be x-disjoint
from it. The pin to $1,4,5$ makes it x-disjoint from a medium square $m$.
If it were also x-disjoint from $3$, the quadruple $2,3,m,6$ would clear $\Sigma$.
Thus, $6$ x-overlaps $3$ and is y-disjoint from it.

If $y_6\ge s_0$, the vertical pins to $0,2$ and $0,3$, from the clearing triples $0,2,6$
and $0,3,6$, make it x-disjoint from both, a contradiction. Hence $y_6<s_0$ and
$x_6\ge s_0$. By \eqref{eq:right-stack-frame}, $6$ is x-disjoint from the
left large square $p$. Since it x-overlaps $3$, we must have
$p=2,c=3$. Moreover $6$ lies below $3$.

Finally, we locate square $7$ and find a chain that clears $\Sigma$.
Square $7$ is x-disjoint from $2$. If $y_7<s_0$, it lies right of $0$,
while $2=p$ lies entirely left of $x=s_0$ by
\eqref{eq:right-stack-frame}. Otherwise the vertical pin to
$0,2$, from the clearing triple $0,2,7$, makes it y-overlap $2$.
The pin to $1,4,5$ makes $7$ x-disjoint from a medium square $m$.
If it were also x-disjoint from $3$, the quadruple $2,3,m,7$ would clear $\Sigma$.
Thus, both $6,7$ x-overlap $3$ and are y-disjoint from it.
They cannot be y-disjoint from each other, since $1,3,6,7$ clears $\Sigma$.
They therefore y-overlap, are x-disjoint, and lie on the same
vertical side of $3$, hence below it, as $6$ does.

Since the lower medium square $b$ y-overlaps $3$, the upper medium
square $t$ satisfies $y_t\ge y_b+s_b>y_3$. Neither small square can
y-overlap $t$. The pins to $1,4,5$ must therefore be to $b$,
making both small squares x-disjoint from it. The horizontal chain
$\{2,b,6,7\}$ has sum at least $V\ge\Sigma$, the final contradiction.
\end{proof}

\subsection{Three bounds from a four-square core}\label{sec:fourcore}
Call the elements
of $\mathcal C=\{1,2,3,4\}$ the core indices and those of
$\mathcal S=\{5,6,7\}$ the small indices. These names are local to
this subsection; the largest square \(Q_0\) is kept separate.

We use two families of chain exclusions. In all applications below they
follow from side-sum inequalities, and hence hold in every packing of the
given side lengths in $[0,L]^2$:
\begin{description}
\item[(C)] No chain contains \(Q_0,Q_b,Q_j\), where \(b\in\mathcal C\) and
\(j\notin\{0,b\}\).
\item[(Q)] No four-square chain contains at least three indices from
\(\mathcal C\).
\end{description}
Figure~\ref{fig:frame8} illustrates the two alternatives in the next lemma.

\begin{figure}[t]\centering
\begin{minipage}[t]{0.32\textwidth}\centering
\begin{tikzpicture}[x=0.0708cm,y=0.0708cm]
\useasboundingbox (0,0) rectangle (63.555,57);
\fill[vc!12] (37,0) rectangle (57,57);
\fill[hc!12] (0,37) rectangle (57,57);
\filldraw[fill=black!7,draw=black,line width=.35pt] (0,0) rectangle (37,37);
\node[font=\scriptsize,inner sep=0pt] at (18.5,18.5) {$0$};
\filldraw[fill=black!7,draw=black,line width=.35pt] (37,0) rectangle (57,20);
\node[font=\scriptsize,inner sep=0pt] at (47,10) {$1$};
\filldraw[fill=black!7,draw=black,line width=.35pt] (38,20) rectangle (57,39);
\node[font=\scriptsize,inner sep=0pt] at (47.5,29.5) {$2$};
\filldraw[fill=black!7,draw=black,line width=.35pt] (39,39) rectangle (57,57);
\node[font=\scriptsize,inner sep=0pt] at (48,48) {$3$};
\filldraw[fill=black!7,draw=black,line width=.35pt] (0,40) rectangle (17,57);
\node[font=\scriptsize,inner sep=0pt] at (8.5,48.5) {$4$};
\filldraw[fill=black!7,draw=black,line width=.35pt] (17,51) rectangle (23,57);
\node[font=\tiny,inner sep=0pt] at (20,54) {$5$};
\filldraw[fill=black!7,draw=black,line width=.35pt] (23,52) rectangle (28,57);
\node[font=\tiny,inner sep=0pt] at (25.5,54.5) {$6$};
\filldraw[fill=black!7,draw=black,line width=.35pt] (28,53) rectangle (32,57);
\node[font=\tiny,inner sep=0pt] at (30,55) {$7$};
\draw[line width=.7pt] (0,0) rectangle (57,57);
\filldraw[fill=vc!25,draw=vc!70!black,line width=.25pt] (59.28,0) rectangle (62.7,20);
\node[font=\tiny,inner sep=0pt] at (60.99,10) {$1$};
\filldraw[fill=vc!25,draw=vc!70!black,line width=.25pt] (59.28,20) rectangle (62.7,39);
\node[font=\tiny,inner sep=0pt] at (60.99,29.5) {$2$};
\filldraw[fill=vc!25,draw=vc!70!black,line width=.25pt] (59.28,39) rectangle (62.7,57);
\node[font=\tiny,inner sep=0pt] at (60.99,48) {$3$};
\end{tikzpicture}\par
\smallskip{\footnotesize (a) Alternative I, $L=57$\\ $s=(37,20,19,18,17,6,5,4)$}
\end{minipage}
\hfill
\begin{minipage}[t]{0.32\textwidth}\centering
\begin{tikzpicture}[x=0.0789cm,y=0.0789cm]
\useasboundingbox (0,-6.555) rectangle (57,57);
\fill[vc!12] (37,0) rectangle (57,57);
\fill[hc!12] (0,37) rectangle (57,57);
\filldraw[fill=black!7,draw=black,line width=.35pt] (0,0) rectangle (37,37);
\node[font=\scriptsize,inner sep=0pt] at (18.5,18.5) {$0$};
\filldraw[fill=black!7,draw=black,line width=.35pt] (37,0) rectangle (57,20);
\node[font=\scriptsize,inner sep=0pt] at (47,10) {$1$};
\filldraw[fill=black!7,draw=black,line width=.35pt] (38,20) rectangle (57,39);
\node[font=\scriptsize,inner sep=0pt] at (47.5,29.5) {$2$};
\filldraw[fill=black!7,draw=black,line width=.35pt] (0,39) rectangle (18,57);
\node[font=\scriptsize,inner sep=0pt] at (9,48) {$3$};
\filldraw[fill=black!7,draw=black,line width=.35pt] (18,40) rectangle (35,57);
\node[font=\scriptsize,inner sep=0pt] at (26.5,48.5) {$4$};
\filldraw[fill=black!7,draw=black,line width=.35pt] (35,39) rectangle (41,45);
\node[font=\tiny,inner sep=0pt] at (38,42) {$5$};
\filldraw[fill=black!7,draw=black,line width=.35pt] (52,39) rectangle (57,44);
\node[font=\tiny,inner sep=0pt] at (54.5,41.5) {$6$};
\filldraw[fill=black!7,draw=black,line width=.35pt] (53,44) rectangle (57,48);
\node[font=\tiny,inner sep=0pt] at (55,46) {$7$};
\draw[line width=.7pt] (0,0) rectangle (57,57);
\filldraw[fill=hc!25,draw=hc!70!black,line width=.25pt] (0,-5.7) rectangle (18,-2.28);
\node[font=\tiny,inner sep=0pt] at (9,-3.99) {$3$};
\filldraw[fill=hc!25,draw=hc!70!black,line width=.25pt] (18,-5.7) rectangle (35,-2.28);
\node[font=\tiny,inner sep=0pt] at (26.5,-3.99) {$4$};
\filldraw[fill=hc!25,draw=hc!70!black,line width=.25pt] (38,-5.7) rectangle (57,-2.28);
\node[font=\tiny,inner sep=0pt] at (47.5,-3.99) {$2$};
\end{tikzpicture}\par
\smallskip{\footnotesize (b) Alternative II, $L=57$\\ $s=(37,20,19,18,17,6,5,4)$}
\end{minipage}
\hfill
\begin{minipage}[t]{0.32\textwidth}\centering
\begin{tikzpicture}[x=0.0592cm,y=0.0592cm]
\useasboundingbox (0,-8.74) rectangle (76,76);
\fill[vc!12] (46,0) rectangle (76,76);
\fill[hc!12] (0,46) rectangle (76,76);
\filldraw[fill=black!7,draw=black,line width=.35pt] (0,0) rectangle (46,46);
\node[font=\scriptsize,inner sep=0pt] at (23,23) {$0$};
\filldraw[fill=black!7,draw=black,line width=.35pt] (46,0) rectangle (76,30);
\node[font=\scriptsize,inner sep=0pt] at (61,15) {$1$};
\filldraw[fill=black!7,draw=black,line width=.35pt] (47,30) rectangle (76,59);
\node[font=\scriptsize,inner sep=0pt] at (61.5,44.5) {$2$};
\filldraw[fill=black!7,draw=black,line width=.35pt] (0,54) rectangle (22,76);
\node[font=\scriptsize,inner sep=0pt] at (11,65) {$3$};
\filldraw[fill=black!7,draw=black,line width=.35pt] (22,55) rectangle (43,76);
\node[font=\scriptsize,inner sep=0pt] at (32.5,65.5) {$4$};
\filldraw[fill=black!7,draw=black,line width=.35pt] (43,59) rectangle (55,71);
\node[font=\scriptsize,inner sep=0pt] at (49,65) {$5$};
\filldraw[fill=black!7,draw=black,line width=.35pt] (55,59) rectangle (66,70);
\node[font=\scriptsize,inner sep=0pt] at (60.5,64.5) {$6$};
\filldraw[fill=black!7,draw=black,line width=.35pt] (66,59) rectangle (76,69);
\node[font=\scriptsize,inner sep=0pt] at (71,64) {$7$};
\draw[line width=.7pt] (0,0) rectangle (76,76);
\filldraw[fill=hc!25,draw=hc!70!black,line width=.25pt] (0,-7.6) rectangle (22,-3.04);
\node[font=\tiny,inner sep=0pt] at (11,-5.32) {$3$};
\filldraw[fill=hc!25,draw=hc!70!black,line width=.25pt] (22,-7.6) rectangle (43,-3.04);
\node[font=\tiny,inner sep=0pt] at (32.5,-5.32) {$4$};
\filldraw[fill=hc!25,draw=hc!70!black,line width=.25pt] (43,-7.6) rectangle (55,-3.04);
\node[font=\tiny,inner sep=0pt] at (49,-5.32) {$5$};
\filldraw[fill=hc!25,draw=hc!70!black,line width=.25pt] (55,-7.6) rectangle (66,-3.04);
\node[font=\tiny,inner sep=0pt] at (60.5,-5.32) {$6$};
\filldraw[fill=hc!25,draw=hc!70!black,line width=.25pt] (66,-7.6) rectangle (76,-3.04);
\node[font=\tiny,inner sep=0pt] at (71,-5.32) {$7$};
\end{tikzpicture}\par
\smallskip{\footnotesize (c) Alternative II, $L=76$\\ $s=(46,30,29,22,21,12,11,10)$}
\end{minipage}
\caption{The frame alternatives of
Lemma~\ref{lem:fourcore-frame}; the shaded regions are $x\ge s_0$ and
$y\ge s_0$. In (a) and (b), $L=57$ is less than $s_0+s_4+s_7=58$ and
$s_2+s_3+s_4+s_7=58$, so (C) and (Q) hold. (a) Alternative~I with $t=3$ and
$k=4$; the bar records the right-arm chain $1,2,3$. (b) Alternative~II with
$b=1$, $t=2$, $p=3$, and $q=4$; the bar records the horizontal chain $p,q,t$.
(c) Alternative~II with $L=76$ less than $s_0+s_4+s_7=77$,
$s_2+s_3+s_4+s_7=82$, and $s_1+s_2+s_6+s_7=80$, so no chain contains $b,t$
and two small squares. The small squares form a horizontal chain above
$Q_t$, and $Q_6,Q_7$ lie right of $Q_p,Q_q$. The bar records the chain
$3,4,5,6,7$ that ends the proofs of Propositions~\ref{prop:ell8}
and~\ref{prop:ell9}.}
\label{fig:frame8}
\end{figure}

\begin{lemma}[Cornering and the two frame alternatives]\label{lem:fourcore-frame}
Suppose (C) and (Q) hold for every packing of the given side lengths in
$[0,L]^2$. Then $Q_0$ can be translated to a corner while all other squares
remain fixed. After reflections set $Q_0=[0,s_0]^2$,
$\mathcal R=\{i\ge1:x_i\ge s_0\}$, and
$\mathcal U=\{i\ge1:y_i\ge s_0\}$. Up to interchanging axes, one of
the following holds.

\begin{enumerate}[label=\textup{\Roman*.},ref=\Roman*]
\item Three core squares form the right-arm vertical chain. Let \(t\) be its
topmost index and \(k\) the remaining core index. All three small squares
lie in \(\mathcal U\setminus\mathcal R\). Squares \(Q_k,Q_t\) are x-disjoint
from each other and from all three small squares.
\item Write the right-arm core squares as \(b,t\), bottom to top, and the
upper-arm core squares as \(p,q\), left to right. The axes may be chosen
so that \(Q_t\) lies right of \(Q_q\). Then \(p,q,t\) is a horizontal
three-core chain. Every small square x-overlaps \(Q_t\) and is
y-disjoint from both \(Q_b,Q_t\).
\end{enumerate}
If, in Alternative II, no chain can contain \(b,t\) and two small
indices, then the three small squares y-overlap pairwise and form
a horizontal chain. Its middle and rightmost squares are x-disjoint from
both \(Q_p,Q_q\). Moreover, either all three small squares belong
to \(\mathcal R\), or all three lie above \(Q_t\).
\end{lemma}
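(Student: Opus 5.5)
The plan has three stages: corner $Q_0$, read off the two frame alternatives from the arms, and then place the small squares under the extra hypothesis.

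\textbf{Cornering.} I would apply Lemma~\ref{lem:pane}. By (C), every triple $\{0,b,j\}$ with $b\in\mathcal C$ clears $\Sigma$; this includes $b\in\{1,2,3\}$ and every $j\notin\{0,b\}$. The triples among $0,1,2,3$ containing $0$ are of this form. The remaining triple $\{1,2,3\}$ is the delicate point. I would handle it through (Q): if $1,2,3$ were a chain, the pin arguments would produce a fourth square, for instance $Q_4$ pinned to the chain, and so a four-chain with three core indices. If that is awkward, I would use that (C) and (Q) hold in every packing and corner $Q_0$ directly, as in Lemma~\ref{lem:exceptional}. Move $Q_0$ toward a wall away from $Q_1$; a blocker would complete a chain $0,1,j$ excluded by (C). Then argue about the perpendicular direction, where a blocker pair on both sides would again give an excluded chain $b,0,j$.

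\textbf{Frame alternatives.} Once $Q_0=[0,s_0]^2$, apply Lemma~\ref{lem:full-frame} with $m=7$ and no motion. Its hypothesis is that every triple containing $0$ in $\{0,\dots,7\}$ clears $\Sigma$, which follows from (C) when the triple contains a core index. Triples $\{0,5,6\}$ and the like are not covered by (C), so here I would argue directly with blockers instead. The arms are chains, so by (Q) each arm contains at most three core indices.

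\emph{Alternative I.} Suppose three core squares lie in $\mathcal R$, with $t$ topmost and $k$ the fourth core index. Then $k\in\mathcal U\setminus\mathcal R$. Every small square $e$ must avoid $\mathcal R$, since otherwise (Q) is violated, so $e\in\mathcal U\setminus\mathcal R$. The right arm has pairwise x-overlaps, so $k$ and each $e$ (which start left of $s_0$) are x-disjoint from $t$, or else they would lie above $t$ and extend the vertical chain, contradicting (Q). Within the upper arm, the squares are x-disjoint from one another.

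\emph{Alternative II.} Otherwise the core splits two and two, since a $3{+}1$ split with three in $\mathcal U$ is Alternative I after interchanging axes. Let $r=t$ be topmost in $\mathcal R\setminus\mathcal U$ and $u=q$ rightmost in $\mathcal U\setminus\mathcal R$, as in Lemma~\ref{lem:two-cover}. Either $t$ lies right of $q$ or $q$ lies above $t$; swapping axes, assume the former. This gives the chain $p,q,t$. For a small square $e$, the chain $p,q,t,e$ violates (Q) unless $e$ x-overlaps $t$. In that case $e$ is y-disjoint from $t$, and x-overlap with $b$ follows from the right-arm structure. I expect this x-overlap step to need the most care.

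\textbf{Final clause.} Assume no chain contains $b,t$ and two small squares. Two small squares that were y-disjoint would stack vertically with $b,t$, since each is y-disjoint from both. Hence the three small squares pairwise y-overlap, so they form a horizontal chain. Each small square x-overlaps $t$, and $t$ lies right of $q$, so the middle and rightmost small squares start right of $x_q+s_q$; this needs checking against the leftmost square. They are therefore x-disjoint from $p$ and $q$. Finally, if some small square lay below $t$ and another above, they would be y-disjoint, which is excluded. So all three lie on the same side of $t$: either all above $t$, or all below $t$, hence in $\mathcal R$.

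I expect the main obstacle to be the cornering step together with the arm hypotheses for triples of $0$ with two small squares.
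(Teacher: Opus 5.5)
\textbf{The frame step rests on a claim that is false here.} Lemma~\ref{lem:full-frame} does not apply, because (C) excludes no triple $\{0,e,e'\}$ with $e,e'$ small. Its conclusion that the arms are chains genuinely fails under (C) and (Q) alone. In Figure~\ref{fig:frame8}(c), $Q_6$ and $Q_7$ both lie in $\mathcal R$ and are x-disjoint. Small squares in $\mathcal U$ can likewise be stacked vertically, so your sentence ``within the upper arm, the squares are x-disjoint from one another'' is also false. No blocker argument can rescue a false statement.

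The missing idea is the consequence of (C) that singles out \emph{core} squares. A core square in $\mathcal R$ x-overlaps every other square of $\mathcal R$, since otherwise the two squares and $Q_0$ form a horizontal chain excluded by (C). Symmetrically, a core square in $\mathcal U$ y-overlaps every other square of $\mathcal U$. This is all the proof needs:
\begin{itemize}
\item It makes the core part of each arm a chain, so $r,u\le3$ by (Q).
\item In Alternative~I, it makes a small square of $\mathcal R$ y-disjoint from the three core squares there, and it makes $Q_k$ (and $Q_t$ when $t\in\mathcal U$) x-disjoint from the small squares.
\item In Alternative~II, it separates two cases you merged. A small square in $\mathcal R$ x-overlaps $Q_b,Q_t$ directly. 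A small square outside $\mathcal R$ lies in $\mathcal U$, is x-disjoint from $Q_p,Q_q$, and only then is forced by (Q) to x-overlap $Q_t$.
\end{itemize}
Your route to y-disjointness from $Q_b$ via x-overlap with $Q_b$ fails for the second kind, which can x-overlap $Q_t$ without x-overlapping $Q_b$. The correct reason is that its lower edge is at least $s_0>y_t$, so it lies above $Q_t$ and hence above $Q_b$. The same case split ($t\in\mathcal U$, or $y_t<s_0\le y_e$) is what justifies ``they would lie above $t$'' in Alternative~I.

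\textbf{The cornering step also has a gap.} Lemma~\ref{lem:pane} is unavailable because $\{1,2,3\}$ need not clear $\Sigma$. Three core squares do form a chain in both alternatives; in Figure~\ref{fig:frame8}(a), $1,2,3$ have side sum exactly $L$. Your repair through (Q) does not work: a pin forces an overlap, not a fourth member of a chain. Your fallback is the paper's route, but it is incomplete. With blockers on both sides, a chain $b,0,j$ arises only if some core square is x-disjoint from $Q_0$. The two blockers may both be small, and then $\{j,0,j'\}$ is not excluded. You still need the finish:
\begin{itemize}
\item every core square then x-overlaps $Q_0$ and lies above it;
\item (C) applied to $\{0,b,b'\}$ makes the four core squares pairwise y-overlapping;
\item so they form a horizontal four-chain, contrary to (Q).
\end{itemize}

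With these repairs your final clause is correct. This includes the inequality $x_{e_2}\ge x_{e_1}+s_{e_1}>x_t\ge x_q+s_q$, and your observation that a small square below $Q_t$ has $y_e<s_0$ and therefore lies in $\mathcal R$.
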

\begin{proof}
Choose an axis on which \(Q_0,Q_1\) are disjoint. Move \(Q_0\) away from \(Q_1\)
toward the corresponding wall. A blocker would make a chain with \(Q_0,Q_1\),
contrary to (C). Interchange axes and reflect so that the wall reached is the
bottom wall, \(y_0=0\).

If \(Q_0\) has no blocker on its left or no blocker on its right, move it
horizontally to the corresponding corner. Otherwise there is a blocker on
each side. Any core square x-disjoint
from \(Q_0\) would form a chain with \(Q_0\) and a blocker on the opposite side.
By (C), every core square therefore x-overlaps \(Q_0\), and hence
lies above it. Any two core squares that were y-disjoint would form a chain
with \(Q_0\), again contrary to (C). The four core squares consequently
y-overlap pairwise and form a horizontal chain, contradicting (Q).

After cornering, the sets \(\mathcal R\) and \(\mathcal U\) cover
\(\{1,\ldots,7\}\). A core square in \(\mathcal R\) x-overlaps every other square in \(\mathcal R\): otherwise those two squares
and \(Q_0\) violate (C). Thus, it is y-disjoint from all of them. Symmetrically,
a core square in \(\mathcal U\) y-overlaps every other square in
\(\mathcal U\), and is x-disjoint from it. Unlike in
Lemma~\ref{lem:full-frame}, the arms need not be chains: (C) gives no
exclusion for a triple consisting of \(Q_0\) and two small squares.

Let
\(r=|\mathcal C\cap\mathcal R|\) and
\(u=|\mathcal C\cap\mathcal U|\).
The core squares in either arm form a chain. By (Q), \(r,u\le3\).
Also at most one core square belongs to both arms: two such squares would
have to overlap on both axes. Since the arms cover the four core squares,
up to interchanging axes there are just two possibilities: one arm contains
three core squares, or $r=u=2$ with disjoint core arms.

In Alternative I, a small square in \(\mathcal R\) would be y-disjoint from
all three core squares there, giving a chain excluded by (Q). Thus, all
small squares lie in \(\mathcal U\setminus\mathcal R\). The index \(k\) belongs
to \(\mathcal U\). Hence it is x-disjoint from all the small squares.

If \(t\in\mathcal U\), the same is true of \(t\), and \(k,t\) are
x-disjoint. Otherwise \(y_t<s_0\). Any small square, or \(Q_k\), that
x-overlapped \(Q_t\) would have to lie above it: its lower edge
is at least \(s_0>y_t\). Together with the three right-arm core squares,
it would make a chain excluded by (Q). Thus, \(Q_t\) is x-disjoint from
\(Q_k\) and all three small squares.

In Alternative II, each core square belongs to exactly one arm. Hence
\(x_p,x_q<s_0\le x_b,x_t\) and \(y_b,y_t<s_0\le y_p,y_q\).
The disjoint squares \(Q_q,Q_t\) satisfy either ``\(t\) right of
\(q\)'' or ``\(q\) above \(t\).'' Interchanging the axes exchanges the two
arms, and hence exchanges the roles \(b,t\) with \(p,q\), respectively; it
turns the second alternative into the first. The square indices themselves
are unchanged. Thus,
\(p,q,t\) is a horizontal chain.

A small square in \(\mathcal R\) x-overlaps both right-arm core
squares, and hence is y-disjoint from both. A small square outside
\(\mathcal R\) belongs to \(\mathcal U\); it is x-disjoint from \(p,q\).
It cannot also be x-disjoint from \(t\), by (Q). It therefore x-overlaps \(t\), is y-disjoint from it, and lies above it because its lower
edge is at least \(s_0>y_t\). It also lies above \(b\). This proves the
assertions about all three small squares.

Under the additional exclusion in the lemma, two small squares cannot be
y-disjoint, since together with \(b,t\) they would form a forbidden
chain. Thus, the small squares y-overlap pairwise and are mutually
x-disjoint. Order them from left to right as \(e_1,e_2,e_3\). Since \(e_1\)
x-overlaps \(t\), we have
\(x_{e_2}\ge x_{e_1}+s_{e_1}>x_t\ge x_q+s_q\). Thus, \(e_2,e_3\) lie right of \(q\), and also right of \(p\).
Finally, if one small square is outside \(\mathcal R\), it lies above \(t\)
by the preceding argument. Since all three are y-disjoint from \(t\) and are pairwise y-overlapping, all must lie on the same side of it, namely above.
\end{proof}

In either frame alternative, three core squares form a chain, and hence
$L\ge s_2+s_3+s_4$: use the right-arm triple in Alternative I or
$p,q,t$ in Alternative II.

\begin{proposition}\label{prop:ell7}
For every $s\in\ell_2^\downarrow$,
\[
L_8^*(s)\ge b_6(s):=\min(E,\ X),
\]
where $X=s_3+s_4+s_6+s_7$.
\end{proposition}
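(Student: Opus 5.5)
Set $\Sigma=b_6=\min(E,X)$ and suppose a packing of side $L<\Sigma$ exists; the plan is to verify the chain exclusions (C) and (Q) of Section~\ref{sec:fourcore}, invoke Lemma~\ref{lem:fourcore-frame}, and then exhibit in each frame alternative a chain with side sum at least $X$.

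\emph{Exclusions.} For (C), a chain containing $Q_0$, a core square $Q_b$ with $b\le4$, and a third square has side sum at least $s_0+s_4+s_7=E\ge\Sigma$, so it is excluded. For (Q), a four-square chain with at least three core indices has side sum at least $s_2+s_3+s_4+s_7\ge s_3+s_4+s_6+s_7=X\ge\Sigma$. Both exclusions depend only on side sums, so they hold in every packing in $[0,L]^2$. Lemma~\ref{lem:fourcore-frame} therefore corners $Q_0=[0,s_0]^2$ and gives, up to interchanging axes, Alternative I or Alternative II.

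\emph{Alternative I.} All three small squares lie in $\mathcal U\setminus\mathcal R$. Thus $0\le x_e<s_0$, so each small square x-overlaps $Q_0$ and lies above it. I claim two small squares must be x-disjoint. If instead they x-overlapped pairwise, they would be pairwise y-disjoint, and together with $Q_0$ they would form the vertical chain $0,5,6,7$. Its side sum $s_0+s_5+s_6+s_7$ is at least $E$, a contradiction. So some pair $e,e'$ of small squares is x-disjoint. The lemma says $Q_k,Q_t$ are x-disjoint from each other and from all small squares. Hence $k,t,e,e'$ is a horizontal chain of side sum at least $s_3+s_4+s_6+s_7=X$, a contradiction.

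\emph{Alternative II.} Every small square is y-disjoint from both $Q_b$ and $Q_t$. Hence two y-disjoint small squares would form, with $b,t$, a vertical chain of side sum at least $s_3+s_4+s_6+s_7=X$. This is exactly the additional hypothesis of Lemma~\ref{lem:fourcore-frame}, so the small squares form a horizontal chain $e_1,e_2,e_3$. Moreover, $e_2,e_3$ are x-disjoint from both $Q_p$ and $Q_q$. Then $p,q,e_2,e_3$ is a horizontal chain with side sum at least $s_3+s_4+s_6+s_7=X$, again a contradiction. By continuity (positive sides suffice), this proves $L_8^*\ge b_6$.

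The only delicate step is in Alternative I: producing two mutually x-disjoint small squares. The frame lemma does not supply them, and I expect the vertical chain through $Q_0$, excluded by $E$, to do the work there.
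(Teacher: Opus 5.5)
Your route is the paper's. You verify (C) and (Q) from $E$ and $X$, invoke Lemma~\ref{lem:fourcore-frame}, and in each alternative exhibit a four-square chain $\{k,t,e,e'\}$ or $\{p,q,e_2,e_3\}$ with sum at least $X$. Your Alternative~II argument is exactly the paper's.

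There is, however, a gap in Alternative~I. You assert that the vertical chain $0,5,6,7$ has side sum $s_0+s_5+s_6+s_7$ at least $E=s_0+s_4+s_7$. That is equivalent to $s_5+s_6\ge s_4$, which does not follow from monotonicity. For $s=(10,10,10,10,10,1,1,1)$ one has $\Pchi=13$, while $E=21$ and $X=22$. So $\Pchi$ falls below $\min(E,X)$, and you cannot fall back on $X$ either. As written, the case where the three small squares pairwise x-overlap is therefore not excluded, and your ``delicate step'' is exactly where the argument is incomplete.

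The missing idea is to extract an inequality among the sides from the frame itself. In Alternative~I the right arm is a vertical chain of three core squares, so $L\ge s_2+s_3+s_4$. In Alternative~II the chain $p,q,t$ gives the same bound. Combining this with $L<\Sigma\le X=s_3+s_4+s_6+s_7$ yields $s_6+s_7>s_2\ge s_4$. Hence $s_5+s_6\ge s_6+s_7>s_4$, that is, $\Pchi-E=s_5+s_6-s_4>0$.

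With this inserted, $\Pchi>E\ge\Sigma$ and the chain $0,5,6,7$ is excluded. Your proof is then complete and coincides with the paper's, which makes precisely this observation before treating the two alternatives. The example above shows the step is genuinely needed: there $L\ge s_2+s_3+s_4=30$ already contradicts $L<21$, so no frame can occur. The inequality $\Pchi>E$ is valid only because the frame forces it, not because of the ordering of $s$.
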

\begin{proof}
Put \(\Sigma=b_6\) and suppose \(L<\Sigma\).

Every triple in (C) has side sum at least \(E\). Every four-square family
containing at least two core indices has side sum at least \(X\): for three
or four core indices the additional core sides only increase this minimum.
Thus, (C) and (Q) hold, and we may use the frame.

The frame lemma also gives \(L\ge s_2+s_3+s_4\). Since \(X>L\),
it follows that \(s_6+s_7>s_2\ge s_4\).
Consequently,
\begin{equation}\label{eq:PfromX}
\Pchi-E=s_5+s_6-s_4\ge s_6+s_7-s_4>0,
\end{equation}
and \(\Pchi>E\ge\Sigma\).

In Alternative I, any two x-disjoint small squares, together with
\(k,t\), would make a clearing four-square chain. Otherwise all three
small squares are pairwise x-overlapping and are a vertical chain above
\(Q_0\). With \(Q_0\), their side sum is \(\Pchi\), another contradiction.

In Alternative II, a chain consisting of \(b,t\) and two small squares
has side sum at least \(X\). The frame lemma therefore produces the horizontal
chain consisting of \(p,q\) and the middle and rightmost small
squares. This chain also has side sum at least \(X\). Both alternatives are impossible.
\end{proof}

\begin{proposition}\label{prop:ell8}
For every $s\in\ell_2^\downarrow$,
\[
L_8^*(s)\ge b_7(s):=\min(E,\ J_4,\ Y),
\]
where $Y=s_2+s_4+s_6+s_7$.
\end{proposition}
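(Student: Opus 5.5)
Set $\Sigma=b_7$ and suppose $L<\Sigma$. The plan is to check (C) and (Q), apply Lemma~\ref{lem:fourcore-frame}, and rule out both frame alternatives by exhibiting a clearing chain.

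For (C), every triple $\{0,b,j\}$ with $b\in\mathcal C$ has side sum at least $s_0+s_4+s_7=E$. For (Q), a four-square family with three core indices has side sum at least $s_2+s_3+s_4+s_7$. Since $E>L\ge s_0+s_1$ we get $s_4+s_7>s_1\ge s_2$, so $s_2+s_3+s_4+s_7> s_3+s_4+s_1\ge s_1+s_2+s_4=J_4$. Thus (Q) holds, the frame applies, and every three-core chain has sum at least $s_2+s_3+s_4$. Chains of $b,t$ with two small squares have sum at least $s_3+s_4+s_6+s_7$, which is not obviously above $\Sigma$. Instead I would use $Y$ together with the inequality $s_3+s_4+s_6+s_7\ge ?$ only where it is needed. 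It may be cleaner to exploit that a four-chain with two core and two small indices, one core index from $\{1,2\}$, has sum at least $s_2+s_4+s_6+s_7=Y$.

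\textbf{Alternative I.} The right arm consists of three core squares with top $t$, and $k$ is the remaining core index; all small squares lie in $\mathcal U\setminus\mathcal R$. If the right arm contains both $1$ and $2$, then $\{1,2,\cdot\}$ already gives a vertical chain of sum at least $J_4$, since its third member is $3$ or $4$. So the arm omits $1$ or $2$, the arm is $\{a,3,4\}$ with $a\in\{1,2\}$, and $k\in\{1,2\}$. Two x-disjoint small squares together with $k,t$ form a horizontal chain. Its sum is at least $s_2+s_4+s_6+s_7=Y$ when $k\le2$ and $t\ge$ anything, so it clears $\Sigma$. Hence the small squares pairwise x-overlap, giving the vertical chain $0,5,6,7$ of sum $\Pchi$. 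As in \eqref{eq:PfromX}, $\Pchi> E$ once $s_5+s_6>s_4$. This follows from $Y>L\ge s_2+s_3+s_4$, which gives $s_6+s_7>s_3\ge s_4$.

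\textbf{Alternative II.} Here $p,q,t$ is a horizontal chain, and $b,t$ are the right-arm core squares. If $\{b,t\}=\{1,2\}$, the vertical chain $1,2$ plus any core square is impossible, but I need a clearing chain. Every small square is y-disjoint from $b,t$, so the chain $b,t,e$ has sum at least $J_7$, which is not enough by itself. I would split according to whether two small squares are y-disjoint. If they are, the chain $b,t,e,e'$ has sum at least $s_{\max(b,t)}$-type bounds. It clears $Y$ when $\{b,t\}\ni 1$ or $2$; when $\{b,t\}=\{3,4\}$, then $\{p,q\}=\{1,2\}$ and $p,q,t$ has sum at least $J_4$, a contradiction. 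So in all remaining cases the additional exclusion of Lemma~\ref{lem:fourcore-frame} holds. The small squares then form a horizontal chain whose middle and right squares lie right of $p,q$, yielding the chain $p,q,e_2,e_3$. If $\{p,q\}$ contains $1$ or $2$, its sum is at least $Y$. Otherwise $\{p,q\}=\{3,4\}$, and I would use the remaining dichotomy. If all small squares are above $t$ and the left one x-overlaps $t$, I expect to get the five-chain $3,4,5,6,7$ (Figure~\ref{fig:frame8}(c)). Its sum $W+s_7$ exceeds $J_4$ as in \eqref{eq:bridge-end}, using $s_5+s_6>s_1$, which would need to be derived from $E$ and $Y$.

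The main obstacle is this last subcase, $\{p,q\}=\{3,4\}$. There I would need to show that $e_1$ also lies right of $p,q$, or find an inequality like $s_3+s_4+s_6+s_7>\Sigma$ from $E$ and $Y$; I would try $x_{e_1}\ge s_0$ when all small squares lie in $\mathcal R$.
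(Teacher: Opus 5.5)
Your checks of (C) and (Q), your Alternative I, and your reduction in Alternative II all match the paper's argument. In Alternative II, $\{b,t\}$ must contain $1$ or $2$, so the extra exclusion of Lemma~\ref{lem:fourcore-frame} applies and $p,q,e_2,e_3$ clears $Y$ unless $\{p,q\}=\{3,4\}$. One slip: in checking (Q), $s_3+s_4+s_1\ge s_1+s_2+s_4$ goes the wrong way; just note $s_2+s_3+s_4+s_7\ge Y$ since $s_3\ge s_6$.

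The genuine gap is the case $\{p,q\}=\{3,4\}$, $\{b,t\}=\{1,2\}$, which you leave open. It needs two ideas.

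\emph{All small squares in $\mathcal R$.} You do not need $Q_{e_1}$ to lie right of $Q_p,Q_q$. Each small square has $x_e\ge s_0$, so $Q_0$ and the horizontal small chain form a horizontal chain of sum $\Pchi$. You already showed $\Pchi>E\ge\Sigma$.

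\emph{All small squares above $Q_t$.} The missing observation is that $Q_t$ sits above $Q_b$ with $\{b,t\}=\{1,2\}$. Hence $y_t+s_t\ge s_1+s_2\ge s_2+s_4>s_0$, where the last step comes from $J_4>L\ge s_0+s_1$. So every small square $e$ lies in $\mathcal U$. It is then x-disjoint from $Q_p$ and $Q_q$: if it x-overlapped one of them, the two would be y-disjoint, and with $Q_0$ they would form a vertical chain excluded by (C). Together with the mutual x-disjointness of the small squares, this gives the horizontal chain $3,4,5,6,7$.

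For the final contradiction, compare this five-chain with $Y$, not with $J_4$. We have $s_3+s_4+s_5+s_6+s_7=Y+(s_3+s_5-s_2)$, and $s_3+s_5\ge s_4+s_7>s_1\ge s_2$ because $E>L\ge s_0+s_1$. The inequality $s_5+s_6>s_1$ that your comparison with $J_4$ would require does not follow from $L\ge s_0+s_1$, $L\ge s_2+s_3+s_4$, and $L<\min(E,J_4,Y)$. For example, at $s=(15,11,10,8,8,4.5,4.5,4)$ with $L=26$ all of these hold, yet $s_5+s_6<s_1$ and $s_3+s_4+s_5+s_6+s_7=J_4$.
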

\begin{proof}
Put \(\Sigma=b_7\) and suppose \(L<\Sigma\). Call the core indices
\(1,2\) marked. The following families clear $\Sigma$:
\begin{enumerate}
\item the triples in (C), with side sum at least \(E\);
\item a triple of core indices containing both marked indices, with side sum
at least \(J_4\);
\item four-square families having at least three core indices, with side
sum at least \(Y\);
\item two core indices, at least one marked, and two small indices, with side
sum at least \(Y\).
\end{enumerate}
For (3), the least sum with exactly three core indices is
\(s_2+s_3+s_4+s_7\ge Y\). For (4), the least sum is precisely \(Y\).
In particular (C) and (Q) hold.

Again the frame gives a three-core chain. Hence
\(L\ge s_2+s_3+s_4\), and \(Y>L\) gives \(s_6+s_7>s_3\ge s_4\).
It follows, as in \eqref{eq:PfromX}, that \(\Pchi>E\ge\Sigma\).
Lemma~\ref{lem:chain} gives \(L\ge s_0+s_1\), and \(E>L\) gives
\(s_3+s_5\ge s_4+s_7>s_1\ge s_2\).
Therefore, the five-square family with indices \(3,4,5,6,7\) clears $\Sigma$:
\begin{equation}\label{eq:fiveY}
s_3+s_4+s_5+s_6+s_7=Y+(s_3+s_5-s_2)>Y\ge\Sigma.
\end{equation}

In Alternative I, the right-arm triple cannot contain both marked indices.
Thus, the outside core index \(k\) is marked. If two small squares are
x-disjoint, they and \(k,t\) make a clearing chain of type (4). Otherwise
the small squares and \(Q_0\) form a vertical chain with side sum
\(\Pchi\).

In Alternative II, the horizontal triple \(p,q,t\) cannot contain
both marked indices. Hence \(b\) is marked. Since type (4) forbids a chain
consisting of \(b,t\) and two small squares, the last conclusions
of Lemma~\ref{lem:fourcore-frame} apply. If either \(p\) or \(q\) is marked,
the resulting chain with the middle and rightmost small squares is of
type (4).

It remains that \(\{p,q\}=\{3,4\}\) and
\(\{b,t\}=\{1,2\}\). If all small squares belong to \(\mathcal R\), their
horizontal chain together with \(Q_0\) has side sum \(\Pchi\).
Otherwise, by the last assertion of Lemma~\ref{lem:fourcore-frame}, all three
lie above \(Q_t\). Since \(Q_t\) lies above \(Q_b\) and \(\{b,t\}=\{1,2\}\),
its top edge satisfies
\(y_t+s_t\ge s_1+s_2\ge s_2+s_4>s_0\), where the final inequality follows from \(J_4>L\ge s_0+s_1\).
Thus, all small squares belong to \(\mathcal U\), and each is x-disjoint from
\(p,q\). With their mutual x-disjointness this gives the five-square
horizontal chain \(3,4,5,6,7\), which clears $\Sigma$ by \eqref{eq:fiveY}.
\end{proof}

\begin{proposition}\label{prop:ell9}
For every $s\in\ell_2^\downarrow$,
\[
L_8^*(s)\ge b_8(s):=\min(E,\ H_4,\ M,\ \Pchi),
\]
where $H_4=s_1+s_3+s_4$ and $M=s_1+s_4+s_6+s_7$.
\end{proposition}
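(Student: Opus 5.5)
The plan is to follow the pattern of Propositions~\ref{prop:ell7} and~\ref{prop:ell8}: a single \emph{marked} core index, now $1$ alone, together with Lemma~\ref{lem:fourcore-frame}. Put $\Sigma=b_8$ and suppose $L<\Sigma$.

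\textbf{Clearing families.} I would first record which families clear $\Sigma$.
\begin{enumerate}
\item Every triple in (C) has side sum at least $E$.
\item Every triple of core indices containing $1$ has side sum at least $H_4$.
\item Every four-square family consisting of $1$, one further core index and two small indices has side sum at least $M$.
\item The vertical or horizontal family formed by $Q_0$ and the three small squares has side sum $\Pchi$.
\end{enumerate}
From $E>L\ge s_0+s_1$ we get $s_4+s_7>s_1$. I expect this inequality to be the main arithmetic input.

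\textbf{Verifying (Q): the main obstacle.} The frame lemma requires that no four-chain contains three core indices. Chains with three core indices including $1$ contain a triple of type~(2) and are excluded. The remaining case is a chain $\{2,3,4,e\}$ with $e$ small. I do not see an immediate side-sum comparison showing $s_2+s_3+s_4+s_7\ge\Sigma$. My first attempt would combine $s_4+s_7>s_1$ with the comparison against $M$, which would need $s_2+s_3\ge s_1+s_6$. If that does not follow, I would re-run the cornering and arm analysis of Lemma~\ref{lem:fourcore-frame} directly. There, a chain $\{2,3,4,e\}$ is only used to bound the core arms, so I would treat the possible arm $\{2,3,4\}$ as an extra subcase. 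In that subcase, index $1$ is then forced into the other arm together with a core square, and types (2) and (3) should supply the contradiction.

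\textbf{Alternative I.} The right-arm triple cannot contain $1$ by (2). Hence $k=1$. Suppose two small squares are x-disjoint. By the frame lemma, $Q_k$ and $Q_t$ are x-disjoint from each other and from every small square, so these two small squares together with $k,t$ form a chain of type~(3). Otherwise the three small squares pairwise x-overlap and lie above $Q_0$. With $Q_0$ they form a vertical chain of sum $\Pchi$.

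\textbf{Alternative II.} The horizontal triple $p,q,t$ cannot contain $1$, so $b=1$. A chain consisting of $b,t$ and two small squares is then of type~(3). The final clause of Lemma~\ref{lem:fourcore-frame} therefore applies: the small squares form a horizontal chain, and its middle and rightmost members are x-disjoint from $Q_p$ and $Q_q$. The resulting horizontal chain $p,q,e_2,e_3$ has two core indices but need not contain $1$, so type~(3) does not exclude it directly. I would use the remaining clause instead. If all three small squares lie in $\mathcal R$, their horizontal chain together with $Q_0$ has sum $\Pchi$. Otherwise they all lie above $Q_t$, which lies above $Q_1$. Then the vertical chain $1,t,e$ should combine with the small squares, through the pins forced by type~(3), into a clearing family. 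The step I expect to need care is bounding $y_t+s_t\ge s_1+s_4$. I would combine this bound with $H_4>L$ to place the small squares in $\mathcal U$, and then conclude as at the end of Proposition~\ref{prop:ell8}.
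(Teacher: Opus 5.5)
Your outline matches the paper's: marked index $1$, Lemma~\ref{lem:fourcore-frame}, $k=1$ in Alternative~I, $b=1$ in Alternative~II, and a final five-square chain. However, the step you flag as the main obstacle is left open, and the whole argument depends on it. By type~(2) the three-core chain produced by the frame avoids $1$. So the right-arm triple in Alternative~I and the triple $p,q,t$ in Alternative~II are both $\{2,3,4\}$.

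The frame lemma's proof obtains the conclusions you quote precisely by excluding chains $\{2,3,4,e\}$ with $e$ small. These conclusions are: the small squares lie in $\mathcal U\setminus\mathcal R$ and are x-disjoint from $Q_k,Q_t$; and every small square x-overlaps $Q_t$ and is y-disjoint from $Q_b,Q_t$. Treating $\{2,3,4\}$ as an ``extra subcase'' would therefore mean redoing the entire frame analysis in both alternatives. Your sketch of that subcase (``types (2) and (3) should supply the contradiction'') does not identify where the triple actually occurs. The comparison with $M$ does not work either. It needs $s_2+s_3\ge s_1+s_6$, which does not follow from $s_4+s_7>s_1$, as $s=(10,9,5,5,5,5,5,5)$ shows.

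The missing step is a comparison with $H_4$ rather than $M$:
\[
s_2+s_3+s_4+s_7=H_4+(s_2+s_7-s_1)>H_4\ge\Sigma,
\]
since $s_2+s_7\ge s_4+s_7>s_1$, which is the inequality you already derived. With this, (Q) holds and both of your alternatives go through as in the paper.

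In Alternative~II you can drop the detour through the chain $1,t,e$ and pins. From $H_4>L\ge s_0+s_1$ you get $s_3+s_4>s_0$, so $y_t+s_t\ge s_1+s_4\ge s_3+s_4>s_0$. This puts all small squares in $\mathcal U$, hence x-disjoint from $Q_p,Q_q$.

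You cannot then finish verbatim as in Proposition~\ref{prop:ell8}, because \eqref{eq:fiveY} uses $Y$, which is not a term of $b_8$. Instead, the horizontal chain $p,q,5,6,7$ has side sum at least $s_3+s_4+s_5+s_6+s_7>s_0+s_5+s_6+s_7=\Pchi\ge\Sigma$, again because $s_3+s_4>s_0$.
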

\begin{proof}
Put \(\Sigma=b_8\) and suppose \(L<\Sigma\).
The triples in (C) have side sum at least \(E\). Every triple of core
indices containing index \(1\) has side sum at least \(H_4\). Since
\(E>L\ge s_0+s_1\), we have \(s_2+s_7\ge s_4+s_7>s_1\). Hence every
three-core-plus-one-small family clears $\Sigma$, because
\(s_2+s_3+s_4+s_7=H_4+(s_2+s_7-s_1)>H_4\ge\Sigma\).
The four-core family clears $\Sigma$ as well. Thus, (C) and (Q) hold.
Finally any family consisting of index \(1\), another core index, and two
small indices has side sum at least \(M\).

In Alternative I, the right-arm triple cannot contain index \(1\). Hence
\(k=1\). Two x-disjoint small squares, together with \(k,t\), would form a chain
with side sum at least \(M\). Otherwise the small squares and \(Q_0\) form a
vertical chain with side sum \(\Pchi\).

In Alternative II, the horizontal triple \(p,q,t\) cannot contain
index \(1\). Hence \(b=1\). A chain consisting of \(b,t\) and two small
squares has side sum at least \(M\). Thus, the small squares y-overlap pairwise
and form a horizontal chain. If all belong to \(\mathcal R\), this chain with
\(Q_0\) has side sum \(\Pchi\).

Otherwise, by the last assertion of Lemma~\ref{lem:fourcore-frame}, all three
lie above \(Q_t\). The inequality \(H_4>L\ge s_0+s_1\) implies
\(s_3+s_4>s_0\). Since \(Q_t\) lies above \(Q_b=Q_1\), we have
\(y_t+s_t\ge s_1+s_t\ge s_1+s_4\ge s_3+s_4>s_0\).
All small squares therefore belong to \(\mathcal U\), and are x-disjoint
from \(p,q\). The resulting five-square horizontal chain has sum
$s_p+s_q+s_5+s_6+s_7\ge s_3+s_4+s_5+s_6+s_7>s_0+s_5+s_6+s_7=\Pchi\ge\Sigma$.
This is the final contradiction.
\end{proof}

\section{Proof of \texorpdfstring{Theorem~\ref{thm:main}}{Theorem 1.1}}\label{sec:proof}
\subsection{The lower bounds imply the formula}\label{sec:combine}
We now use only the formulas~\eqref{eq:intro-six} and~\eqref{eq:intro-seven}
for $L_6^*$ and $L_7^*$ from~\cite[Theorems~3.8 and~5.5]{BadgerSeven},
the definition of $\F$, the column bound, and the lower bounds
$b_1,\ldots,b_8$. For $n=8$, Lemma~\ref{lem:column} gives
$L_8^*(s)\ge\min(D,P)$ for all $s\in\ell_2^\downarrow$, where $D=s_1+s_3+s_7$ and $P=s_1+s_4+s_5+s_6+s_7$, the value of $P_8$.
Using the notation of Table~\ref{tab:names}, the seven-square
formula~\eqref{eq:intro-seven} reads
\begin{equation}\label{eq:L7}
L_7^*=\max\bigl(L_6^*,\min(\max(W,H),\max(C_0,C_3),R)\bigr).
\end{equation}
Table~\ref{tab:bounds} collects the lower bounds from
Section~\ref{sec:lower}.

\begin{center}
\captionsetup{type=table}
\small
\renewcommand{\arraystretch}{1.12}
\caption{The eight lower bounds of Section~\ref{sec:lower}.}\label{tab:bounds}
\begin{tabular}{@{}l@{\qquad}l@{\qquad}l@{}}
\toprule
$b_1=\min(T,\KK)$ & $b_4=\min(G_7,U,H_5,\Ppi,\Prho,\Pchi)$ & $b_7=\min(E,J_4,Y)$\\
$b_2=\min(C_0,E,J_4,J_{67})$ & $b_5=\min(K,G_6,U,H_5,J_6,V)$ & $b_8=\min(E,H_4,M,\Pchi)$\\
$b_3=\min(J_7,V)$ & $b_6=\min(E,X)$ & \\
\bottomrule
\end{tabular}
\end{center}

\begin{lemma}[Five balanced partitions]\label{lem:five-partitions}
For every $s\in\ell_2^\downarrow$, the minimum defining $\KK(s)$ in
Proposition~\ref{prop:ell1} is attained by one of the partitions
\[
\begin{array}{c|c|c}
& I&J\\\hline
\xi&\{1,3,7\}&\{2,4,5\}\\
\sigma&\{1,4,7\}&\{2,3,5\}\\
\tau&\{1,4,5\}&\{2,3,7\}\\
\upsilon&\{1,5,7\}&\{2,3,4\}\\
\varphi&\{1,2,7\}&\{3,4,5\}.
\end{array}
\]
The labels indicate the corresponding rows of Table~\ref{tab:pieces}. Each
partition supplies two terms of that row after $s_6$ is added to its
part-sums. These are all the terms of $\sigma$, $\tau$, and $\upsilon$; the
rows $\xi$ and $\varphi$ also have the terms $H_5$ and $H$,
respectively, treated in Lemma~\ref{lem:threshold}.
\end{lemma}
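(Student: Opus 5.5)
The minimum defining $\KK$ runs over the ten unordered $3+3$ partitions of $\{1,2,3,4,5,7\}$. The plan is to show that each of the five partitions not in the list is dominated by one that is. Here "dominated" means its larger part-sum is at least the larger part-sum of some listed partition, for every nonincreasing $s$. Since $s_6$ is added to both parts, it plays no role and can be dropped.

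Always put index $1$ in $I$. The ten partitions are then indexed by the pair $I\setminus\{1\}\subset\{2,3,4,5,7\}$. The listed pairs are $\{3,7\}$, $\{4,7\}$, $\{4,5\}$, $\{5,7\}$, and $\{2,7\}$. The unlisted pairs are $\{2,3\}$, $\{2,4\}$, $\{2,5\}$, $\{3,4\}$, and $\{3,5\}$.

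The main tool is monotonicity under exchange. Suppose swapping an element $a\in I$ with a smaller element $b\in J$ (so $s_a\ge s_b$) yields $I'$, $J'$. Then the sum of $I'$ is at most the sum of $I$, and the sum of $J'$ is at most the sum of $I$ plus $s_b$ minus $s_a$, which is at most the sum of $I$. So if $I$ is the heavier part, the swap does not increase the maximum. I would apply this case by case.

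For the unlisted pairs containing $2$, the part $I=\{1,2,x\}$ with $x\in\{3,4,5\}$ dominates termwise both $\{1,2,7\}$ and its complement $\{3,4,5\}$. The reason is $s_1+s_2+s_x\ge s_1+s_2+s_7$, and also $s_1+s_2+s_x\ge s_3+s_4+s_5$ since $s_1\ge s_3$ and $s_2\ge s_4$. Hence these three partitions are dominated by $\varphi$.

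For $I=\{1,3,4\}$ with $J=\{2,5,7\}$, the part $I$ is heavier, because $s_1\ge s_2$, $s_3\ge s_5$, and $s_4\ge s_7$. Swapping $3$ and $7$ gives $\sigma$, so $\sigma$ dominates it.

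For $I=\{1,3,5\}$ with $J=\{2,4,7\}$, the part $I$ is heavier, because $s_1\ge s_2$, $s_3\ge s_4$, and $s_5\ge s_7$. Swapping $5$ and $7$ gives $\xi$.

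So all five unlisted partitions are dominated, and the minimum is attained at one of the five listed partitions. The remaining claims about Table~\ref{tab:pieces} are direct inspection once $s_6$ is added to the part-sums:
\begin{itemize}
\item $\xi$ gives $\Ppi$ and $s_2+s_4+s_5+s_6$;
\item $\sigma$ gives $M$ and $s_2+s_3+s_5+s_6$;
\item $\tau$ gives $R$ and $s_2+s_3+s_6+s_7$;
\item $\upsilon$ gives $s_1+s_5+s_6+s_7$ and $s_2+s_3+s_4+s_6$;
\item $\varphi$ gives $J_{67}$ and $W$.
\end{itemize}
The only leftover terms are $H_5$ in $\xi$ and $H$ in $\varphi$.

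There is no real obstacle here. The only care needed is to identify correctly which part is heavier before applying each exchange.
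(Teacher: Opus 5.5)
Your overall route is the paper's: identify each $3+3$ partition by its part containing $1$, and show, using only $s_1\ge\cdots\ge s_7$, that each unlisted partition has maximum part-sum at least that of a listed one. Your treatment of the three partitions with part $\{1,2,x\}$, $x\in\{3,4,5\}$, is the paper's comparison with $\varphi$, and your matching of the five partitions with rows of Table~\ref{tab:pieces} is correct. For $\{1,3,4\}$ you use $\sigma$. The paper instead uses $\xi$ for both $\{1,3,4\}$ and $\{1,3,5\}$, since both part-sums of $\xi$ are at most $s_1+s_3+s_5$. Either choice works.

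The flaw is your general ``monotonicity under exchange'' principle, which is false as stated. After swapping $a\in I$ with $b\in J$, the new part satisfies $\sum_{j\in J'}s_j=\sum_{j\in J}s_j+s_a-s_b$. Knowing only that $I$ is the heavier part does not give $\sum_{i\in I}s_i-\sum_{j\in J}s_j\ge s_a-s_b$. For example, take $s=(10,10,9,1,1,1,1,1)$. Then $I=\{1,3,5\}$ is heavier than $J=\{2,4,7\}$ ($12$ against $11$), but swapping $1$ with $7$ gives $J'=\{1,2,4\}$ with sum $20$.

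In your two applications the conclusion is nevertheless true, by direct termwise comparison. Both $s_1+s_4+s_7$ and $s_2+s_3+s_5$ are at most $s_1+s_3+s_4$. Both $s_1+s_3+s_7$ and $s_2+s_4+s_5$ are at most $s_1+s_3+s_5$. Replacing the exchange principle by these four inequalities closes the gap, and the argument then coincides in substance with the paper's.
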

\begin{proof}
Identify each of the ten unordered $3+3$ partitions by its part containing
$1$. The $\varphi$ partition has no larger maximum part-sum than any
partition whose part containing $1$ is $\{1,2,3\}$, $\{1,2,4\}$, or
$\{1,2,5\}$. Indeed, both of its part-sums are at most $s_1+s_2+s_5$, which
is at most the sum over each of these three parts. Similarly, the $\xi$
partition has no larger maximum part-sum than any partition whose part
containing $1$ is $\{1,3,4\}$ or $\{1,3,5\}$, since both of its part-sums are
at most $s_1+s_3+s_5$. These comparisons use only $s_0\ge\cdots\ge s_7$.
\end{proof}

\begin{lemma}[Truncation at $T$]\label{lem:threshold}
If $\F\le T$, then $\max(L_7^*,b_1)\ge\F$.
\end{lemma}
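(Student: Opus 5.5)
The plan is to show that under $F_8\le T$, either $L_7^*\ge F_8$ or $b_1=\mathcal K\ge F_8$.

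\textbf{Step 1: reduce to $b_1$.} Since $F_8=\max(L_7^*,\min_\omega\omega)$, it suffices to assume $L_7^*<F_8$. Then $F_8=\min_\omega\omega$, and we must show $b_1\ge\min_\omega\omega$. Because $F_8\le T$, either $b_1=T\ge F_8$, or $b_1=\mathcal K$. So the claim reduces to $\mathcal K\ge\min_\omega\omega$ whenever $\mathcal K<T$.

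\textbf{Step 2: easy partitions.} By Lemma~\ref{lem:five-partitions}, $\mathcal K$ is attained by one of the five partitions labelled $\xi,\sigma,\tau,\upsilon,\varphi$, and $\mathcal K$ equals the maximum of the two corresponding $s_6$-augmented part sums. For $\sigma,\tau,\upsilon$ those two sums are all the terms of the row, so $\mathcal K=\omega\ge\min_\omega\omega$ at once.

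\textbf{Step 3: the row $\xi$.} Here $\mathcal K=\max(P_\pi,\ s_2+s_4+s_5+s_6)$ and $\xi=\max(H_5,\mathcal K)$. If $H_5\le\mathcal K$, we are done. Otherwise $H_5>\mathcal K$. I would then look for another row $\omega$ whose terms are all at most $\max(\mathcal K,L_7^*)$. Candidates are $\pi=\max(C_0,P_\pi)$ and $\chi=\max(P_\chi,C_3)$, whose terms are close to $P_\pi$ and to $C_3\le H_5$. Since $L_7^*<F_8\le\min(\xi,\pi,\dots)$ and $L_7^*\ge\min(\max(W,H),\max(C_0,C_3),R)$, I would split according to which entry realizes that minimum. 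In each case the aim is to produce an $\omega$ with $\omega\le\max(\mathcal K,L_7^*)$. Because $L_7^*<F_8$, this forces $\omega\le\mathcal K$ (if instead $\omega\le L_7^*$, then $F_8\le\omega\le L_7^*$, contradicting $L_7^*<F_8$), which is what Step 1 needs. The hypothesis $\mathcal K<T=s_0+s_6+s_7$ should be used to bound $C_0=s_0+s_5+s_6$ and related terms against $\mathcal K$, via the ordering $s_5\ge s_7$.

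\textbf{Step 4: the row $\varphi$.} Here $\mathcal K=\max(J_{67},W)$ and $\varphi=\max(H,\mathcal K)$. If $H\le\mathcal K$, we are done. Otherwise $H>\max(W,J_{67})$, so $\max(W,H)=H$. The seven-square entry $\max(W,H)$ then equals $H=\varphi$, so if that entry realizes the minimum in $L_7^*$, we get $L_7^*\ge\varphi\ge F_8$, a contradiction. So either $\max(C_0,C_3)\le L_7^*$ or $R\le L_7^*$. In these subcases I would compare with rows such as $\zeta$, $\kappa$, or $\pi$. I would use the ordering, the inequality $s_0+s_1\le L_6^*$, and $\mathcal K<T$ to bound their terms by $\max(\mathcal K,L_7^*)$.

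The main obstacle is Steps 3 and 4. Finding, in each subcase, a specific row whose every term is dominated by either $\mathcal K$ or a term forced $\le L_7^*$ requires a careful but finite prefix-sum comparison. I would organize it as a small case table, checking each inequality by the prefix-sum test \eqref{eq:prefix-test}.
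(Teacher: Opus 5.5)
Your Steps 1 and 2 are correct. Steps 3 and 4, however, are left open, and the plan you sketch for them cannot succeed.

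\textbf{Why the proposed comparisons fail.} In the case you are in, $L_7^*<F_8\le T$ and $\mathcal K<T$, so $T>\max(\mathcal K,L_7^*)$. Every candidate row you name contains a term at least $T$:
\begin{itemize}
\item $\pi$ and $\kappa$ contain $C_0=s_0+s_5+s_6\ge T$;
\item $\chi$ contains $P_\chi=s_0+s_5+s_6+s_7\ge T$;
\item $\zeta$ contains $T$ itself.
\end{itemize}
So none of these rows can be bounded by $\max(\mathcal K,L_7^*)$. Your suggestion to control $C_0$ through $s_5\ge s_7$ runs the wrong way, since that ordering gives $C_0\ge T>\mathcal K$. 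In fact every row other than $\xi,\sigma,\tau,\upsilon,\varphi,\psi$ has a term at least $T$. For the same reason, the subcase $\max(C_0,C_3)\le L_7^*$ in Step 4 cannot occur.

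\textbf{The missing idea.} The troublesome branches are empty. You already have $s_0+s_1\le L_6^*\le L_7^*$, and you have $L_7^*<F_8\le T=s_0+s_6+s_7$. Together these give $s_1<s_6+s_7$.
\begin{itemize}
\item In the $\xi$ case: $H_5=s_1+s_3+s_5<s_3+s_5+s_6+s_7\le s_2+s_4+s_5+s_6\le\mathcal K$.
\item In the $\varphi$ case: $s_5\le s_1<s_6+s_7$ gives $H=s_1+s_2+s_5\le J_{67}\le\mathcal K$.
\end{itemize}
Hence $\mathcal K$ equals the row $\xi$ or $\varphi$ outright, and $\mathcal K\ge\min_\omega\omega$ follows immediately. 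This redundancy of $H_5$ and $H$ is exactly the key step of the paper's proof.

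With this step added, your argument is complete. It proves only the needed inequality. The paper additionally checks that the remaining rows have a term at least $T$ and that $\psi\ge\tau$, which gives the identity $\min(F_8,T)=\max(L_7^*,b_1)$.
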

\begin{proof}
If $L_7^*\ge T$, then $\F=L_7^*=T$. Suppose $L_7^*<T$.
Since $L_7^*\ge s_0+s_1$, we have $s_1<s_6+s_7$. The additional terms $H_5$
in $\xi$ and $H$ in $\varphi$ are redundant:
$H_5\le s_2+s_4+s_5+s_6$ and $H\le J_{67}$.
For the first comparison use $s_2\ge s_3$ and
$s_4+s_6\ge s_6+s_7>s_1$. For the second use
$s_5\le s_1<s_6+s_7$. Thus, by Lemma~\ref{lem:five-partitions},
$\min(\xi,\sigma,\tau,\upsilon,\varphi)=\KK$.

Every remaining maximum except $\psi$ has a term at
least $T$. In Table~\ref{tab:pieces}, these terms are $C_0$ for
$\alpha,\beta,\gamma,\varepsilon,\theta,\iota,\kappa,\mu,\pi,\rho$;
$U$ for $\delta,\lambda$; $G_6$ for $\eta$; $T$ for $\zeta,\nu$;
$E$ for $\omicron$; and $\Pchi$ for $\chi$.
Here $\psi=P$ is no smaller than $\tau$.
Indeed $R<P$, while
$P-(s_2+s_3+s_6+s_7)=(s_1-s_2)+(s_4+s_5-s_3)>0$ because $s_4+s_5\ge s_6+s_7>s_1\ge s_3$.
Consequently, $\min(T,\min_\omega\omega)=\min(T,\KK)$.
Since $\F=\max(L_7^*,\min_\omega\omega)$ and $L_7^*<T$, it follows that
\begin{equation}\label{eq:truncation-arithmetic}
\min(\F,T)=\max\Bigl(L_7^*,\min\Bigl(T,\min_\omega\omega\Bigr)\Bigr)=\max(L_7^*,b_1).
\end{equation}
The left-hand side equals $\F$ because $\F\le T$.
\end{proof}

\begin{theorem}\label{thm:dominate}
For every ordered positive eight-tuple,
\[
\mathcal L(s):=\max\{L_7^*,\min(D,P),b_1,\ldots,b_8\}\ge\F(s).
\]
\end{theorem}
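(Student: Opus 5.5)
We must show that $\mathcal L(s)\ge\F(s)=\max(L_7^*,\min_\omega\omega)$. Since $\mathcal L\ge L_7^*$, it suffices to treat the case $\min_\omega\omega>L_7^*$. In that case we must show $\mathcal L\ge\omega_0$ for some $\omega_0$ in Table~\ref{tab:pieces}, since $\omega_0\ge\min_\omega\omega=\F$.

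We argue by contradiction. Suppose that for the given ordered tuple every one of the ten lower bounds is strictly below $\F$. That is, $L_7^*<\F$, $\min(D,P)<\F$, and $b_i<\F$ for every $i$. Each $b_i$ is a minimum of side sums, so each inequality $b_i<\F$ says that at least one named side sum in its row of Table~\ref{tab:bounds} lies below $\F$. Every term $\omega$ of Table~\ref{tab:pieces} is at least $\F$, so each of its side sums that attains the maximum is at least $\F$.

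The plan is a finite case analysis over which side sum realizes each minimum. For every choice of witnesses, we aim to exhibit some $\omega$ all of whose terms are then forced below $\F$, which contradicts $\omega\ge\F$. The chain of implications uses only monotonicity $s_0\ge\dots\ge s_7$ and the small linear consequences already used repeatedly in Section~\ref{sec:lower}. Examples are $s_1<s_6+s_7$ when $T>L_7^*\ge s_0+s_1$, and the analogous inequalities derived from $E$, $C_0$, or $G_7$ exceeding $s_0+s_1$.

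The first reduction is Lemma~\ref{lem:threshold}. If $\F\le T$ we are done, so we may assume $T<\F$. Then $b_1<\F$ forces $\KK<\F$, and Lemma~\ref{lem:five-partitions} gives one of the five partitions $\xi,\sigma,\tau,\upsilon,\varphi$ with both part-sums (plus $s_6$) below $\F$. For $\sigma,\tau,\upsilon$ this already contradicts $\omega\ge\F$. So the remaining cases are $\xi$ and $\varphi$, where in addition $H_5\ge\F$ or $H\ge\F$ must hold.

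Next we record which inequalities follow from $L_7^*<\F$ via \eqref{eq:L7}. We have $L_6^*<\F$, and at least one of $\max(W,H)$, $\max(C_0,C_3)$, $R$ is below $\F$. We then branch on these three cases. In each branch we feed in $b_2,\ldots,b_8$ and $\min(D,P)$ one at a time. Each bound offers a short list of terms that could be below $\F$. Each choice either contradicts an earlier established inequality ``$\ge\F$'', typically $H_5\ge\F$ or $H\ge\F$, or combines with accumulated ``$<\F$'' facts to put all terms of some $\omega$ below $\F$.

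For example, in the $\varphi$ branch we have $J_{67},W<\F\le H$. If also $C_0,C_3<\F$, then any witness of $b_2$ among $E,J_4$ closes a row such as $\alpha$ or $\kappa$. The alternative witnesses then descend to $\theta$, $\gamma$, $\eta$, and so on.

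To keep this finite and checkable, I would organize it as a decision tree keyed by the sorted list of surviving side sums. Dominance relations between named sums can be read off via the prefix-sum test \eqref{eq:prefix-test}. Some branches will need a derived inequality, such as $s_3+s_5>s_1$ from $E>s_0+s_1$, to make one sum dominate another; those are the steps where the tuple order is actually used.

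The main obstacle is the bookkeeping. There are twenty-three target rows and roughly forty branch points, and each leaf must land on an actual row of Table~\ref{tab:pieces}. Some leaves may fail to land on any row by pure dominance. There I would first look for a missing derived inequality of the form ``$x>L_7^*\ge s_0+s_1$ implies $\ldots$'', and otherwise fall back on a machine check over the finitely many sign patterns, as the paper does elsewhere with~\cite{BadgerCode}. By homogeneity this is an LP feasibility check per leaf, with variables $d_k=s_k-s_{k+1}\ge0$.
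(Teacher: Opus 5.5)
There is a genuine gap. Your setup matches the paper: argue by contradiction, use that every row $\omega$ is at least $\F$, and get $T<\F$ from Lemma~\ref{lem:threshold}. The next step, however, is invalid. Once $T<\F$, the hypothesis $b_1<\F$ holds automatically, because $b_1=\min(T,\KK)\le T$. It does not force $\KK<\F$.

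So the restriction to the partitions $\xi,\varphi$ is unjustified, and the case $\KK\ge\F$ is never treated. That case is not exotic. When $L_7^*<T$, the redundancy argument inside Lemma~\ref{lem:threshold} gives $\KK=\min(\xi,\sigma,\tau,\upsilon,\varphi)\ge\F$. In that regime the branch you keep is immediately contradictory, and all the work lies in the branch you dropped.

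The same misuse recurs in your example. After $C_0,J_{67}<\F$, the inequality $b_2<\F$ is automatic and supplies no witness among $E,J_4$. Even a witness $E$ would not close $\alpha$ there, since $H_5$ is not known to be below $\F$ when $H\ge\F$. A hypothesis $\min(\cdots)<\F$ carries information only after all but one of its terms are known to be at least $\F$.

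Beyond this, the proposal does not carry out the case analysis, which is the entire content of the theorem. The machine-check fallback is not executed, and it would be applied to the leaves of a tree that already omits cases.

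The paper runs the logic in the opposite direction. It accumulates facts ``$\ge\F$'' from rows $\omega$ whose other terms are already below $\F$. It invokes each $b_i$ only as a terminal contradiction, once every term of $b_i$ is known to be $\ge\F$. The steps are:
\begin{enumerate}
\item $\psi=P\ge\F$ and the column bound give $D<\F$.
\item With $T<\F$, the row $\nu=\max(T,D,R)$ gives $R\ge\F$.
\item \eqref{eq:L7} together with $H\ge C_3$ gives $C_3<\F$.
\item If $C_0\ge\F$, then $W,H<\F$, and the rows $\zeta,\omicron,\varphi$ give $J_4,E,J_{67}\ge\F$, so $b_2\ge\F$. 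Hence $C_0<\F$, and $\pi,\rho,\chi$ give $\Ppi,\Prho,\Pchi\ge\F$.
\item The branch $H_5\ge\F$ is closed by $b_3,b_4,b_5$ through $\delta,\lambda,\beta,\gamma,\eta,\theta$.
\item The branch $E<\F$ is closed by $b_3$ through $\alpha,\iota$.
\item Finally $\kappa,\varepsilon,\mu,\chi$ reduce everything to a contradiction with $b_6$, $b_7$, or $b_8$.
\end{enumerate}
No partition structure and no derived inequalities of the kind you anticipate are needed, beyond monotonicity and Lemma~\ref{lem:threshold}.
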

\begin{proof}
Suppose instead that $\mathcal L<\F$. Then $L_7^*<\F$. Hence $\F$ equals the
minimum of the side-sum maxima and every such maximum is at least $\F$. In particular,
$P=\psi\ge\F$.

We first use the column bound and the bounds $b_1$ and $b_2$.
Lemma~\ref{lem:threshold} gives $T<\F$. Since $\min(D,P)<\F$,
we also have $D<\F$. Comparing with $\nu=\max(T,D,R)$ gives $R\ge\F$.
Moreover $C_3<\F$; indeed $H\ge C_3$ because $s_2\ge s_3$ and $s_5\ge s_6$, and~\eqref{eq:L7} gives
$L_7^*\ge\min(C_3,R)$. Since $L_7^*<\F\le R$, this gives $C_3<\F$.

If $C_0\ge\F$, then the entries $\max(C_0,C_3)$ and $R$ of the
minimum in~\eqref{eq:L7} are at least $\F$. Thus, $L_7^*<\F$ forces $W,H<\F$.
Comparison with $\zeta=\max(W,J_4,T)$, $\omicron=\max(W,E,H)$,
and $\varphi=\max(W,J_{67},H)$ gives $J_4,E,J_{67}\ge\F$, respectively. Then $b_2\ge\F$, a contradiction.
Consequently, $\max(L_7^*,C_0,C_3)<\F\le R$.
The three maxima $\pi,\rho,\chi$ are, respectively,
$\max(C_0,\Ppi)$, $\max(C_0,C_3,\Prho)$, and
$\max(C_3,\Pchi)$. Since each is at least $\F$, we conclude that
\begin{equation}\label{eq:P-auto}
\Ppi,\Prho,\Pchi\ge\F.
\end{equation}

Next we use the bounds $b_3$, $b_4$, and $b_5$.
Suppose $H_5\ge\F$. If $U<\F$, comparison with $\delta=\max(U,J_7,C_3)$ and
$\lambda=\max(V,C_3,U)$ gives $J_7,V\ge\F$, contradicting $b_3<\F$. Thus, $U\ge\F$.
If $G_7\ge\F$, then \eqref{eq:P-auto} gives $b_4\ge\F$, also impossible.
Hence $G_7<\F$. Comparison with $\beta=\max(V,C_0,G_7,C_3)$ and
$\gamma=\max(C_0,G_7,J_6)$ gives $V,J_6\ge\F$. If $J_7\ge\F$, then $b_3\ge\F$, again a contradiction. Otherwise $J_7<\F$, and the inequality
$\eta=\max(G_6,J_7)\ge\F$ gives $G_6\ge\F$. Since $C_0,C_3<\F$, the
inequality $\theta=\max(K,C_0,C_3)\ge\F$ gives $K\ge\F$.
Now $b_5\ge\F$, a contradiction. Therefore, $H_5<\F$.

Finally, we use the bounds $b_6$, $b_7$, and $b_8$.
If $E<\F$, then comparison with $\alpha=\max(J_7,C_0,E,H_5)$ and
$\iota=\max(V,C_0,H_5,E)$ gives $J_7\ge\F$ and $V\ge\F$, contradicting $b_3<\F$. Thus,
$\max(L_7^*,C_0,H_5)<\F\le E$. The remaining comparisons are
\[
\begin{aligned}
\kappa&=\max(C_0,J_4,X),&\qquad
\varepsilon&=\max(C_0,H_4,Y),\\
\mu&=\max(C_0,H_5,M),&
\chi&=\max(C_3,\Pchi).
\end{aligned}
\]
Hence $\max(J_4,X)\ge\F$, $\max(H_4,Y)\ge\F$, and $M,\Pchi\ge\F$.
If $X\ge\F$, then $b_6\ge\F$. Otherwise $J_4\ge\F$.
If $Y\ge\F$, then $b_7\ge\F$. Otherwise $H_4\ge\F$, and
$b_8\ge\F$. Each alternative contradicts $\mathcal L<\F$.
\end{proof}

\begin{proof}[Proof of the formula in Theorem~\ref{thm:main}]
Fix $s\in\ell_2^\downarrow$. Deletion of $Q_7$ gives $L_8^*\ge L_7^*$,
Lemma~\ref{lem:column} gives $L_8^*\ge\min(D,P)$, and
Section~\ref{sec:lower} gives $L_8^*\ge b_i$ for $i=1,\ldots,8$. Since
$L_7^*$, $\min(D,P)$, $b_1,\ldots,b_8$, and $\F$ are continuous,
Theorem~\ref{thm:dominate} extends from positive tuples to all of
$\ell_2^\downarrow$. Therefore, $L_8^*\ge\mathcal L\ge\F$.
Proposition~\ref{prop:eight-upper} gives the reverse inequality
$L_8^*\le\min_\omega\ell_\omega=\F$, which proves both identities.
\end{proof}

\subsection{Minimality of convex representations}\label{sec:minimality}
Every strategy cost $\ell_\sigma$ is convex and positively homogeneous. In
particular it is subadditive:
$\ell_\sigma(s+t)\le\ell_\sigma(s)+\ell_\sigma(t)$.
The minimum $L_n^*(s)$ need not be subadditive in $s$. A strict reverse
inequality $L_n^*(s+t)>L_n^*(s)+L_n^*(t)$ shows that no single strategy is
optimal at both $s$ and $t$: if $\ell_\sigma(s)=L_n^*(s)$ and
$\ell_\sigma(t)=L_n^*(t)$, then
$L_n^*(s+t)\le\ell_\sigma(s+t)\le L_n^*(s)+L_n^*(t)$.
\begin{lemma}[Counting convex costs, {\cite[Lemma~6.1]{BadgerSeven}}]\label{lem:count}
Let $D$ be a convex cone and $F:D\to\mathbb R$ positively homogeneous.
Suppose $p_1,\ldots,p_N\in D$ satisfy
\begin{equation}\label{eq:pairwise-super}
F(p_i+p_j)>F(p_i)+F(p_j)\quad\text{for all }i<j.
\end{equation}
Every representation $F=\min_{1\le k\le m}h_k$ with convex $h_k$ on $D$
requires $m\ge N$.
\end{lemma}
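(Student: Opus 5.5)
The plan is to argue by contradiction through a pigeonhole on the indices. Suppose $F=\min_{1\le k\le m}h_k$ with each $h_k$ convex on $D$ and $m<N$. For each $i$, choose an index $k(i)$ with $h_{k(i)}(p_i)=F(p_i)$; such an index exists because the minimum is over finitely many functions. Since there are $N$ points and fewer than $N$ functions, two distinct indices $i<j$ share the same $k=k(i)=k(j)$.

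The key step is to show that a convex function agreeing with $F$ at $p_i$ and $p_j$ cannot exceed $F(p_i)+F(p_j)$ at $p_i+p_j$. By convexity of $h_k$ on the cone $D$, and since $p_i,p_j\in D$ and $D$ is convex, the midpoint $\tfrac12(p_i+p_j)$ lies in $D$ and
\[
h_k\bigl(\tfrac12(p_i+p_j)\bigr)\le\tfrac12 h_k(p_i)+\tfrac12 h_k(p_j)=\tfrac12\bigl(F(p_i)+F(p_j)\bigr).
\]
Then I will use $F\le h_k$ together with the positive homogeneity of $F$ (not of $h_k$, which need not be homogeneous):
\[
F(p_i+p_j)=2F\bigl(\tfrac12(p_i+p_j)\bigr)\le2h_k\bigl(\tfrac12(p_i+p_j)\bigr)\le F(p_i)+F(p_j).
\]
This contradicts \eqref{eq:pairwise-super}, so $m\ge N$.

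The only point needing care is to avoid using subadditivity of $h_k$ directly, since the $h_k$ are merely convex and not assumed homogeneous. The midpoint trick above transfers the homogeneity from $F$ instead. I also need $\tfrac12(p_i+p_j)\in D$, which holds because $D$ is a convex cone; the same fact gives $p_i+p_j\in D$. There is no real obstacle beyond this bookkeeping.
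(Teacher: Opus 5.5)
Your proof is correct and matches the reasoning the paper relies on: the lemma is quoted from \cite{BadgerSeven}, and the paragraph before it gives the same pigeonhole idea, namely that one function cannot attain the minimum at two points satisfying \eqref{eq:pairwise-super}. That paragraph uses subadditivity of the homogeneous strategy costs, and your midpoint step, which uses the homogeneity of $F$ rather than of $h_k$, is exactly what extends the argument to arbitrary convex $h_k$.
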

\begin{proof}[Proof of the bound $m\ge23$ in Theorem~\ref{thm:main}]
The twenty-three strategies of Theorem~\ref{thm:main} form a complete
family, and their costs are convex. For the lower bound, use the twenty-three integer vectors in
Table~\ref{tab:cert}. Substitution into the formula of
Theorem~\ref{thm:main} at those vectors and their $253$ pairwise sums gives
\begin{equation}\label{eq:eight-super}
L_8^*(p_\omega+p_{\omega'})>L_8^*(p_\omega)+L_8^*(p_{\omega'})\quad\text{for all }\omega\ne\omega'.
\end{equation}
For instance, Table~\ref{tab:cert} gives
$L_8^*(p_\alpha)=L_8^*(p_\beta)=41$, while direct substitution yields
$L_8^*(p_\alpha+p_\beta)=83>82$. All $253$ differences are positive
integers. These are finite integer computations with the formula of
Theorem~\ref{thm:main}. The accompanying code~\cite{BadgerCode} verifies
these computations and checks its data against the manuscript tables. Lemma~\ref{lem:count}
requires at least twenty-three convex functions, hence at least
twenty-three strategies.
\end{proof}

\begin{table}[t]
\centering
\footnotesize
\renewcommand{\arraystretch}{1.05}
\setlength{\tabcolsep}{4pt}
\begin{tabular}{clc@{\qquad}clc}
\toprule
$\omega$ & \multicolumn{1}{c}{$p_\omega$} & $L_8^*(p_\omega)$ & $\omega$ & \multicolumn{1}{c}{$p_\omega$} & $L_8^*(p_\omega)$\\
\midrule
$\alpha$ & $(21,18,15,13,12,10,9,8)$ & $41$ & $\nu$ & $(195,194,162,161,66,65,64,6)$ & $389$ \\
$\beta$ & $(21,19,16,13,11,10,8,7)$ & $41$ & $\xi$ & $(71,32,31,27,26,24,23,22)$ & $104$ \\
$\gamma$ & $(20,18,15,14,12,11,9,8)$ & $42$ & $\omicron$ & $(225,166,128,100,98,97,96,68)$ & $391$ \\
$\delta$ & $(85,83,69,67,53,52,39,38)$ & $190$ & $\pi$ & $(39,27,24,23,19,18,9,8)$ & $67$ \\
$\varepsilon$ & $(61,38,33,32,31,20,19,18)$ & $101$ & $\rho$ & $(38,29,26,25,16,15,11,8)$ & $68$ \\
$\zeta$ & $(690,413,412,279,278,277,269,144)$ & $1103$ & $\sigma$ & $(73,34,31,28,27,25,24,23)$ & $108$ \\
$\eta$ & $(164,162,131,128,98,97,66,65)$ & $358$ & $\tau$ & $(527,212,211,181,177,176,175,173)$ & $740$ \\
$\theta$ & $(71,69,54,52,50,37,35,32)$ & $157$ & $\upsilon$ & $(77,38,31,30,29,27,26,25)$ & $116$ \\
$\iota$ & $(50,47,38,30,29,21,20,19)$ & $98$ & $\varphi$ & $(1034,466,433,369,368,367,366,236)$ & $1501$ \\
$\kappa$ & $(952,580,579,478,477,342,341,340)$ & $1636$ & $\chi$ & $(98,87,66,65,54,35,27,26)$ & $186$ \\
$\lambda$ & $(26,23,20,17,13,12,10,9)$ & $51$ & $\psi$ & $(265,264,231,230,68,67,66,65)$ & $530$ \\
$\mu$ & $(38,27,23,22,20,17,10,9)$ & $66$ &  & &  \\
\bottomrule
\end{tabular}
\medskip
\caption{Ordered tuples $p_\omega$ with $L_8^*(p_\omega+p_{\omega'})>L_8^*(p_\omega)+L_8^*(p_{\omega'})$ for all $\omega\ne\omega'$. At $p_\omega$, the strategy $\sigma_\omega$ is the only one of the 23 with cost $L_8^*(p_\omega)$.}
\label{tab:cert}
\end{table}

\begin{corollary}[Unit-container parameter sets]\label{cor:unit}
The set $X_8=\{s\in\ell_2^\downarrow:L_8^*(s)\le1\}$ is a union of
twenty-three convex subsets and not of fewer.
\end{corollary}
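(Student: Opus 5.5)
Both halves of the corollary come from Theorem~\ref{thm:main}, using positive homogeneity of $L_8^*$ and the certificate vectors of Table~\ref{tab:cert}.

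\emph{Upper bound.} By Theorem~\ref{thm:main}, $L_8^*=\min_\omega\ell_\omega$ on $\ell_2^\downarrow$, where each $\ell_\omega=\max(L_6^*,\omega)$ is a strategy cost. So we may write
\[
X_8=\bigcup_\omega X_\omega,\qquad X_\omega=\{s\in\ell_2^\downarrow:\ell_\omega(s)\le1\}.
\]
Each $\ell_\omega$ is a maximum of finitely many linear forms in $s_0,\ldots,s_7$, hence convex. The cone $\ell_2^\downarrow$ is convex. Therefore each $X_\omega$ is a convex sublevel set, and $X_8$ is a union of twenty-three convex sets.

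\emph{Lower bound.} Suppose $X_8=\bigcup_{k=1}^m C_k$ with each $C_k$ convex. We show $m\ge23$ by the pigeonhole argument behind Lemma~\ref{lem:count}, applied directly to the sets.
\begin{itemize}
\item For each $\omega$, put $q_\omega=p_\omega/L_8^*(p_\omega)$. By homogeneity $L_8^*(q_\omega)=1$, so $q_\omega\in X_8$.
\item Suppose two distinct points $q_\omega,q_{\omega'}$ lay in the same $C_k$. Set $a=L_8^*(p_\omega)$ and $b=L_8^*(p_{\omega'})$. The convex combination
\[
\frac{a}{a+b}\,q_\omega+\frac{b}{a+b}\,q_{\omega'}=\frac{p_\omega+p_{\omega'}}{a+b}
\]
lies in $C_k\subseteq X_8$.
\item Positive homogeneity then gives $L_8^*(p_\omega+p_{\omega'})\le a+b$. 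This contradicts \eqref{eq:eight-super}.
\end{itemize}
Hence the twenty-three points $q_\omega$ lie in pairwise distinct sets $C_k$, and $m\ge23$.

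No step is a real obstacle. The only points to check are that $L_8^*(p_\omega)>0$, so the normalization is legitimate, and that $p_\omega+p_{\omega'}$ lies in the cone $\ell_2^\downarrow$; both are immediate from Table~\ref{tab:cert}.
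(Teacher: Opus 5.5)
Your proposal is correct and is essentially the paper's proof. The upper bound comes from the twenty-three convex sublevel sets $\{\ell_\omega\le1\}$, and the lower bound comes from normalizing the certificate vectors $p_\omega$ of Table~\ref{tab:cert} and noting that the convex combination $(p_\omega+p_{\omega'})/(L_8^*(p_\omega)+L_8^*(p_{\omega'}))$ would violate \eqref{eq:eight-super}; your checks that $L_8^*(p_\omega)>0$ and $p_\omega+p_{\omega'}\in\ell_2^\downarrow$ fill in details the paper leaves implicit.
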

\begin{proof}
By Theorem~\ref{thm:main}, $X_8$ is the union of the sublevel sets
$\{\ell_\omega\le1\}$ of the twenty-three convex strategy costs. For the
last assertion, put $\hat p_\omega=p_\omega/L_8^*(p_\omega)\in X_8$ for the
vectors $p_\omega$ in Table~\ref{tab:cert}. For $\omega\ne\omega'$, the point
$(p_\omega+p_{\omega'})/(L_8^*(p_\omega)+L_8^*(p_{\omega'}))$ lies on the
segment joining $\hat p_\omega$ and $\hat p_{\omega'}$, and
by~\eqref{eq:eight-super} and positive homogeneity its optimal value is
greater than one. Thus, no convex subset of $X_8$ contains two of the
points $\hat p_\omega$, whereas one of fewer than twenty-three sets
covering $X_8$ would contain two of them.
\end{proof}

\section*{Development of the results and AI assistance}
I found the candidate formula for $L_8^*$ in the second half of
July 2026, using Claude Opus~4.8 (Anthropic) as a research assistant.
The search was guided by the form $L_n^*=\max(L_{n-1}^*,\cdot)$ of the
formulas for $n\le7$. The AI system computed exact optima for integer side
vectors with the constraint-programming solver CP-SAT of Google OR-Tools,
and compared them with the minimum of the costs of the strategies found so
far. A side vector with $L_7^*<L_8^*$ at which this minimum exceeds $L_8^*$
shows that a strategy is missing. I first restricted the searches to the
region $L_8^*<s_4+s_5+s_6+s_7$, where no four squares can form a chain, so
that every chain determining the container side has at most three squares.
They produced such side vectors, and new strategies were
read off their optimal packings until exhaustive searches over integer side
vectors with $s_0\le40$ found none there.

Outside that region, I inserted the eighth square into the three
seven-square arrangements of~\cite{BadgerSeven} at geometrically
reasonable places and read candidate terms off the chains that determine
the container side. This gave six further strategies. The search then
became a loop. I chose pairs of side vectors at which the current strategies
failed, the AI system computed and drew their optimal packings, and I
proposed a single arrangement serving both. The AI system then checked the
proposal against exact optima at all nonincreasing integer side vectors
with $s_0\le17$ and at random side vectors, and occasionally repaired a
term. Extracting the separation patterns and chain sums from the
optimal packings required my intervention; on one occasion the AI system
read a term off the arithmetic rather than the geometry, and I read the
correct term off the picture. To normalize the scale, I ran the later
searches on slices with $s_0$ fixed, and a strategy was added only after a
side vector was found at which it is needed. I found the last
strategy, $\sigma_\psi$, associated with the five-term sum $s_1+s_4+s_5+s_6+s_7$,
by scaling and perturbing a side vector with four large and four small
squares.

By the end of July, an exhaustive search over strictly decreasing positive integer side
vectors with $s_0=44$ (about $32$ million side vectors and $450{,}000$ exact solves) found no side vector
at which the twenty-three strategies fail to attain $L_8^*$, and each
strategy was the only one attaining $L_8^*$ at some side vector there. The
Greek letters order the strategies by the number of such side vectors, from
$\alpha$ (most) to $\psi$ (fewest).

Beginning in late July, I attempted the lower bound one strategy
at a time, starting with $\alpha$. With the assistance of Claude
Opus~4.8, I established the desired lower bound on two of the
regions determined by which term of $\alpha$ is largest, using a
cornering argument for the largest square.
The remaining regions resisted this approach, and I paused work
on square packings in early August to work on other projects.
On October 7, 2026, I supplied GPT-6 Astra Pro (OpenAI) with the
candidate formula and the seven-square proof and asked it to
develop lower bounds using those methods. Over a sequence of
exchanges, it developed and assembled proofs of the eight new
inequalities and their arithmetic combination that, after
revision, are presented in Sections~\ref{sec:lower}
and~\ref{sec:proof}. Subsequent exchanges led to the
convex-minimality argument and its supporting certificates.

The manuscript was written and revised using Claude Opus~5.5
(Anthropic) and GPT-6 Astra Pro, which also reviewed the
mathematical arguments. Claude Opus~5.5 prepared the
verification code~\cite{BadgerCode} and used it to check the constructions, figures, and
certificates in exact arithmetic. I have checked all proofs and
am responsible for the content of this paper.

\newcommand{\etalchar}[1]{$^{#1}$}
\providecommand{\MR}{\relax\ifhmode\unskip\space\fi MR }
\providecommand{\MRhref}[2]{%
  \href{http://www.ams.org/mathscinet-getitem?mr=#1}{#2}
}
\providecommand{\href}[2]{#2}


\begin{thebibliography}{{Que}26b}

\bibitem[Bad26a]{BadgerSeven}
Matthew Badger, \emph{Axis-parallel square packing with an exact formula for
  optimal packing of seven squares}, preprint, \texttt{arXiv:2610.22622}, 2026.

\bibitem[Bad26b]{BadgerCode}
Matthew Badger, \emph{paper-code: code accompanying research papers by {M}atthew
  {B}adger}, GitHub repository,
  \url{https://github.com/matthew-badger/paper-code}, folder
  \texttt{eight-squares}, 2026, Verification script version 1.1.0, October 20,
  2026, archived at Zenodo,
  \href{https://doi.org/10.5281/zenodo.23504589}{doi:10.5281/zenodo.23504589}.

\bibitem[EG75]{ErdosGraham75}
Paul Erd{\H o}s and Ronald~L. Graham, \emph{On packing squares with equal
  squares}, Journal of Combinatorial Theory, Series A \textbf{19} (1975),
  119--123.

\bibitem[Fri09]{Friedman09}
Erich Friedman, \emph{Packing unit squares in squares: A survey and new
  results}, Electronic Journal of Combinatorics (2009), Dynamic Survey DS7,
  version of August 14, 2009.

\bibitem[FS04]{FeketeSchepers04}
S\'andor~P. Fekete and J\"org Schepers, \emph{A combinatorial characterization
  of higher-dimensional orthogonal packing}, Mathematics of Operations Research
  \textbf{29} (2004), no.~2, 353--368.

\bibitem[HK13]{HuangKorf13}
Eric Huang and Richard~E. Korf, \emph{Optimal rectangle packing: An absolute
  placement approach}, Journal of Artificial Intelligence Research \textbf{46}
  (2013), 47--87.

\bibitem[KL82]{KarpLipton82}
Richard~M. Karp and Richard~J. Lipton, \emph{Turing machines that take advice},
  L'Enseignement Math\'ematique. Revue Internationale. 2e S\'erie \textbf{28}
  (1982), no.~3-4, 191--209.

\bibitem[LTW{\etalchar{+}}90]{Leung90}
Joseph Y.-T. Leung, Tommy~W. Tam, C.~S. Wong, Gilbert~H. Young, and Francis
  Y.~L. Chin, \emph{Packing squares into a square}, Journal of Parallel and
  Distributed Computing \textbf{10} (1990), no.~3, 271--275.

\bibitem[MFNK96]{Murata96}
Hiroshi Murata, Kunihiro Fujiyoshi, Shigetoshi Nakatake, and Yoji Kajitani,
  \emph{{VLSI} module placement based on rectangle-packing by the
  sequence-pair}, IEEE Transactions on Computer-Aided Design of Integrated
  Circuits and Systems \textbf{15} (1996), no.~12, 1518--1524.

\bibitem[MM67]{MoonMoser67}
J.~W. Moon and L.~Moser, \emph{Some packing and covering theorems}, Colloquium
  Mathematicum \textbf{17} (1967), 103--110.

\bibitem[{Que}26a]{ElevenSquaresFormalized26}
{Queuingtheorydotcom}, \emph{11{S}quares{F}ormalized: {L}ean formalization of
  the optimality proof of the 11 square packing}, GitHub repository,
  \url{https://github.com/Queuingtheorydotcom/11SquaresFormalized}, 2026,
  Verification report dated October 6, 2026; accessed October 16, 2026.

\bibitem[{Que}26b]{ElevenSquares26}
\bysame, \emph{11{S}quares{O}ptimal: a proof of the optimality of the 11 square
  packing}, GitHub repository,
  \url{https://github.com/Queuingtheorydotcom/11SquaresOptimal}, 2026, Posted
  September 29, 2026.

\bibitem[Str03]{Stromquist03}
Walter Stromquist, \emph{Packing 10 or 11 unit squares in a square}, Electronic
  Journal of Combinatorics \textbf{10} (2003), Research Paper 8.

\bibitem[SV20]{SilvanusVygen20}
Jannik Silvanus and Jens Vygen, \emph{Few sequence pairs suffice: Representing
  all rectangle placements}, SIAM Journal on Discrete Mathematics \textbf{34}
  (2020), no.~4, 2017--2032.

\end{thebibliography}
\end{document}